\newtheorem{thm}{Theorem}[section]
\newtheorem{lem}[thm]{Lemma}
\newtheorem{coro}[thm]{Corollary}
\newtheorem{conj}{Conjecture}
\newtheorem*{remark}{Remark}
\newcommand{\T}{\mathcal{T}}
\newcommand{\pattern}[4]{										% mesh pattern
	\raisebox{0.6ex}{
		\begin{tikzpicture}[scale=0.35, baseline=(current bounding box.center), #1]
		\foreach \x/\y in {#4}		\fill[gray!50] (\x,\y) rectangle +(1,1);
%		\foreach \x/\y in {#4}		\fill[pattern=north east lines] (\x,\y) rectangle +(1,1);
		\draw (0.01,0.01) grid (#2+0.99,#2+0.99);
		\foreach \x/\y in {#3}		\filldraw (\x,\y) circle (6pt);
		\end{tikzpicture}}
}
\newcommand{\boks}[2]{({#1, #2})}   % A box with the old
\definecolor{red}{rgb}{1,0,0}
{}
\begin{document}

\begin{center}
{\large \bf Joint equidistributions of mesh patterns 123 and 132 with minus antipodal shadings}
\end{center}

\begin{center}
Shuzhen Lv$^{a}$ and Philip B. Zhang$^{b*}$
\\[6pt]

$^{a,b}$College of Mathematical Sciences \\
Tianjin Normal University \\ Tianjin  300387, P. R. China\\[6pt]
Email:
 $^{a}${\tt lvshuzhenlsz@yeah.net},
      $^{b*}${\tt zhang@tjnu.edu.cn}
\end{center}

\noindent\textbf{Abstract.}
The study of  joint equidistributions of mesh patterns 123 and 132 with the same symmetric shadings was recently initiated by Kitaev and Lv, where 75 of 80 potential joint equidistributions were proven. In this paper, we prove 112 out of 126 potential joint equidistributions of mesh patterns 123 and 132 with the same minus antipodal shadings. As a byproduct, we present 562 joint equidistribution results for non-symmetric and non-minus-antipodal shadings. To achieve this, we construct bijections, find recurrence relations, and obtain generating functions. Moreover, we demonstrate that the joint distributions of several pairs of mesh patterns are related to the unsigned Stirling numbers of the first kind. \\

\noindent {\bf Keywords:}  mesh pattern; joint equidistribution; minus antipodal shading; Stirling number of the first kind\\

\noindent {\bf AMS Subject Classifications:} 05A05, 05A15.\\

\section{Introduction}\label{intro}
An $n$-permutation is a bijection of the set $[n]:=\{1, 2, \ldots, n\}$.
We denote $S_n$ the set of all $n$-permutations. 
Patterns in permutations have garnered significant attention in the literature (see~\cite{Kit}), with numerous researchers actively exploring this area. The concept of a \emph{mesh pattern} was introduced by Br\"and\'en and Claesson~\cite{BrCl}. A \emph{mesh pattern} of length \( k \) is defined as a pair \((\tau, R)\), where \(\tau \in S_k\) and \( R \) is a subset of \([0,k] \times [0,k]\), with \([0,k]\) denoting the set of integers from 0 to \( k \).
Let \(\boks{i}{j}\) denote the box whose corners have coordinates \((i,j)\), \((i,j+1)\), \((i+1,j+1)\), and \((i+1,j)\). The horizontal lines (resp., vertical lines) represent the values (resp., positions) in the pattern. Mesh patterns are formed by shading boxes in \( R \). The figure
\[
\pattern{scale=0.8}{3}{1/1,2/3,3/2}{0/0,1/1,1/2,3/1}
\]
represents the mesh pattern with $\tau=132$ and $R = \{\boks{0}{0},\boks{1}{1},\boks{1}{2},\boks{3}{1}\}$. Many papers have focused on the investigation of mesh patterns, see~\cite{AKV,  Hilmarsson2015Wilf, KL, KZ, KZZ, KLv}. 
%However, the systematic exploration of mesh patterns was initiated by Hilmarsson \emph{et al.} \cite{Hilmarsson2015Wilf}, while the study of their distribution was not conducted until the work presented by Kitaev and the second author~\cite{KZ}. 

The {\em reverse, complement, and inverse} of $\pi=\pi_1\pi_2\cdots \pi_n \in S_n$, are respectively defined as $\pi^r=\pi_n\pi_{n-1}\cdots\pi_1$, $\pi^c= (n + 1 - \pi_1)(n + 1 - \pi_2)\cdots (n + 1-\pi_n)$, and $\pi^i=\pi^i_1\pi^i_2\cdots\pi^i_n$ such that $\pi^i_j=\pi_m$ if $\pi_j=m$, for any $1\leq j,m \leq n$. Given a mesh pattern $(\tau,R)$, the corresponding operations are defined as
\begin{align*}
(\tau,R)^r&=(\tau^r,R^r),\text{ where } R^r=\{(n - x,y): (x,y)\in R\};\\
(\tau,R)^c&=(\tau^c,R^c),\text{ where } R^c=\{(x,n - y): (x,y)\in R\};\\
(\tau,R)^i&=(\tau^i,R^i),\text{ where } R^i=\{(y,x): (x,y)\in R\}.
\end{align*}
For instance, for the mesh pattern $p=(213,\{(0,1),(1,3),(2,2)\})$, the specific operations are applied as follows.
\[
\pattern{scale=0.8}{3}{1/2,2/1,3/3}{0/1,1/3,2/2}\xrightarrow{c}\pattern{scale=0.8}{3}{1/2,2/3,3/1}{0/2,1/0,2/1}\xrightarrow{r}\pattern{scale=0.8}{3}{1/1,2/3,3/2}{1/1,2/0,3/2}\xrightarrow{i}\pattern{scale=0.8}{3}{1/1,2/3,3/2}{1/1,0/2,2/3}
\]

A sequence $\pi'=\pi_{i_1}\pi_{i_2}\cdots\pi_{i_k}$ is an {\em occurrence} of a mesh pattern $p=(\tau, R)$ in a permutation $\pi=\pi_1\pi_2\cdots \pi_n$ if $(i)$ $\pi'$ is order-isomorphic to $\tau$, and $(ii)$ the shaded squares given by $R$ do not contain any elements of $\pi$ not appearing in $\pi'$. Otherwise, we say that $\pi$ {\em avoids} $p$, and we let $S_n(p)$ denote the set of such permutations.
For instance, the permutation 24513 contains 2 occurrences of the mesh pattern $(132, \{\boks{0}{0},\boks{1}{1},\boks{1}{2},\boks{3}{1}\})$ as the subsequence 241 and 243. 

Two patterns \( q_1 \) and \( q_2 \) are said to be \textit{Wilf-equivalent}, denoted \( q_1 \sim q_2 \), if \( |S_n(q_1)| = |S_n(q_2)| \). Moreover, \( q_1 \) and \( q_2 \) are called \textit{equidistributed}, denoted \( q_1 \sim_{d} q_2 \), if  for any nonnegative integers \( n \) and \( k \), the number of \( n \)-permutations with \( k \) occurrences of \( q_1 \) is equal to that with \( k \) occurrences of \( q_2 \). A stronger notion, \textit{joint equidistribution} of two mesh patterns, denoted \( (q_1, q_2) \sim_d (q_2, q_1) \) or \( q_1 \sim_{jd} q_2 \), means that the number of \( n \)-permutations with \( k \) occurrences of \( q_1 \) and \( \ell \) occurrences of \( q_2 \) is equal to that with \( \ell \) occurrences of \( q_1 \) and \( k \) occurrences of \( q_2 \) for any nonnegative integers \( n \), \( k \), and \( \ell \).

Patterns 123 and 132 remain invariant under both the group-theoretic inverse and the reflection of the elements across the diagonal. This motivates the authors of \cite{KLv} to study the joint equidistributions of mesh patterns 123 and 132 with certain shadings. We say that a mesh pattern has an \textit{minus antipodal shading} if, for any \( i \neq j \), precisely one of \( (i,j) \) and \( (j,i) \) is shaded. The notion of a minus antipodal shading of a mesh pattern was first introduced in the context of multidimensional permutations in \cite{S-S-J}. More relevant to our paper, Kitaev and Lv \cite{KLv} investigated the joint equidistributions of the mesh patterns 123 and 132 with the same symmetric shadings. Lv and Zhang \cite{L-Z-joint} extended the study to the joint equidistributions of the mesh patterns 123 and 321 with the same symmetric and minus antipodal shadings.

In this paper, we establish 112 joint equidistributions for mesh patterns 123 and 132 with minus antipodal shadings. Table~\ref{tab-sy-shadings} presents a summary of our results, which are classified into 8 disjoint classes of shadings considered in Sections~\ref{sec-the first element}--\ref{sec-X_4}. An additional four pairs with other minus antipodal shadings, considered in Section~\ref{sec-other}, can be found in Table~\ref{tab-other-proved-shadings}. Finally, in Section \ref{sec-con}, we give 14 conjectured joint equidistributions as presented in Table~\ref{tab-conj}, and 562 relevant joint equidistributions, which are given by non-minus-antipodal and non-symmetric shadings for the mesh patterns, as listed in Table \ref{tab-extend}.

To illustrate the information presented in Table~\ref{tab-sy-shadings}, consider the pair of mesh patterns
$$q_1=\pattern{scale = 0.5}{3}{1/1,2/2,3/3}{0/0,0/1,0/2,0/3,2/1,2/2,3/1,3/2},\hspace{0.3cm}q_2=\pattern{scale = 0.5}{3}{1/1,2/3,3/2}{0/0,0/1,0/2,0/3,2/1,2/2,3/1,3/2}$$
obtained from type $X^{(1)}$ shading of the form $\raisebox{0.5ex}{\begin{tikzpicture}[scale = 0.2, baseline=(current bounding box.center)]
	\foreach \x/\y in {0/0,0/1,0/2,0/3}		
		\fill[gray!50] (\x,\y) rectangle +(1,1); 	
        \draw (0,1)--(4,1);
        \draw (1,0)--(1,4);
        \draw (0,2)--(1,2);
         \draw (0,3)--(1,3);
         \draw (2,0)--(2,1);
         \draw (3,0)--(3,1);
 			\node  at (2.6,2.6) {$p$};
 			\filldraw (1,1) circle (4pt) node[above left] {};
 			\end{tikzpicture}}$ by inserting, respectively, the mesh patterns $p_1=\hspace{-0.1cm}\pattern{scale = 0.5}{2}{1/1,2/2}{1/0,1/1,2/0,2/1}$ and $p_2=\hspace{-0.1cm}\pattern{scale = 0.5}{2}{1/2,2/1}{1/0,1/1,2/0,2/1}$ instead of $p$. In general, throughout the paper, we use $q_1$ and $q_2$ to represent mesh patterns 123 and 132, respectively, with the same shadings depending on the context. Moreover, we always let $p_1$ (resp., $p_2$) be the length-2 mesh pattern obtained from $q_1$ (resp., $q_2$) by removing the bottom row and leftmost column, as in the example above. Also, by abusing the notation, all of our bijective maps used to prove joint equidistributions in this paper are denoted by $f$. Finally, for a pair of mesh patterns $\{q_1,q_2\}$, we often omit the brackets and the comma. 
			
For the patterns in Sections~\ref{sec-the first element}--\ref{sec-X_4}, we note that only the results for type \( X^{(i)} \) shadings, for \( i \in \{1,2,3,4\} \), require justifications, since all other results follow from the fact that type \( Y^{(i)} \) shading is obtained from type \( X^{(i)} \) shading by applying the inverse operation. Hence, we omit type \( Y^{(i)} \) shadings from our considerations. Also, in this paper, we number pairs of mesh patterns 123 and 132 with the same type \( X^{(i)} \) (resp., \( Y^{(i)} \)) shading by \( X^{(i)}_j \) (resp., \( Y^{(i)}_j \)) for \( i \in \{1,2,3,4\} \) and \( j \geq 1 \), and as noted above, we only provide proofs for the mesh patterns with type \( X^{(i)}_j \) shadings. 

\begin{table}[!ht]
 	{
 		\renewcommand{\arraystretch}{1.7}
 		\setlength{\tabcolsep}{5pt}
 \begin{center} 
 		\begin{tabular}{|c |c | c| c|}
 			\hline
 		\footnotesize{Type}	 & \footnotesize{Form}	& {\footnotesize Mesh pattern $p$} & {\footnotesize Reference}  \\
 			\hline
 			\hline

\multirow{2.6}{0.7cm}{$X^{(1)}$}  & 
\multirow{2.6}{1.1cm}{\raisebox{0.5ex}{\begin{tikzpicture}[scale = 0.2, baseline=(current bounding box.center)]
	\foreach \x/\y in {0/0,0/1,0/2,0/3}		
		\fill[gray!50] (\x,\y) rectangle +(1,1); 	
        \draw (0,1)--(4,1);
        \draw (1,0)--(1,4);
        \draw (0,2)--(1,2);
         \draw (0,3)--(1,3);
         \draw (2,0)--(2,1);
         \draw (3,0)--(3,1);
 			\node  at (2.6,2.6) {$p$};
 			\filldraw (1,1) circle (4pt) node[above left] {};
 			\end{tikzpicture}}}
 & \footnotesize{horizontal or vertical symmetric shadings} &\footnotesize{Subsection~\ref{subsec-X_1-1}}
 \\[5pt]
\cline{3-4}
 &  &  $\pattern{scale = 0.5}{2}{}{1/0,1/1,2/0,2/1}$ $\pattern{scale = 0.5}{2}{}{0/1,0/2,1/1,1/2}$  $\pattern{scale = 0.5}{2}{}{0/0,0/2,1/0,1/1,1/2,2/2}$ $\pattern{scale = 0.5}{2}{}{0/0,1/0,1/1,1/2,2/0,2/2}$ $\pattern{scale = 0.5}{2}{}{0/0,0/1,0/2,1/1,2/1,2/2}$ $\pattern{scale = 0.5}{2}{}{0/0,0/1,1/1,2/0,2/1,2/2}$ &  \footnotesize{Subsection~\ref{subsec-2-2}}
 \\[5pt]
\hline
\multirow{2.6}{0.7cm}{$Y^{(1)}$} & 
\multirow{2.6}{1.1cm}{\raisebox{0.5ex}{\begin{tikzpicture}[scale = 0.2, baseline=(current bounding box.center)]
	\foreach \x/\y in {0/0,1/0,2/0,3/0}		
		\fill[gray!50] (\x,\y) rectangle +(1,1); 	
        \draw (0,1)--(4,1);
        \draw (1,0)--(1,4);
        \draw (0,2)--(1,2);
         \draw (0,3)--(1,3);
         \draw (2,0)--(2,1);
         \draw (3,0)--(3,1);
 			\node  at (2.6,2.6) {$p$};
 			\filldraw (1,1) circle (4pt) node[above left] {};
 			\end{tikzpicture}}}
 & \footnotesize{vertical or horizontal symmetric shadings} &\footnotesize{Subsection~\ref{subsec-X_1-1}}
 \\[5pt]
\cline{3-4}
 &  & $\pattern{scale = 0.5}{2}{}{0/1,0/2,1/1,1/2}$ 
 $\pattern{scale = 0.5}{2}{}{1/0,1/1,2/0,2/1}$ $\pattern{scale = 0.5}{2}{}{0/0,0/1,1/1,2/0,2/1,2/2}$  $\pattern{scale = 0.5}{2}{}{0/0,0/1,0/2,1/1,2/1,2/2}$ $\pattern{scale = 0.5}{2}{}{0/0,1/0,1/1,1/2,2/0,2/2}$ $\pattern{scale = 0.5}{2}{}{0/0,0/2,1/0,1/1,1/2,2/2}$& \footnotesize{Subsection~\ref{subsec-2-2}}
 \\[5pt]
\hline
\multirow{2.6}{0.7cm}{$X^{(2)}$}  & \multirow{2.6}{1.1cm}{\raisebox{0.6ex}{\begin{tikzpicture}[scale = 0.2, baseline=(current bounding box.center)]
	\foreach \x/\y in {0/0,0/1,0/2,3/0}		
		\fill[gray!50] (\x,\y) rectangle +(1,1); 	
        \draw (0,1)--(4,1);
        \draw (1,0)--(1,4);
        \draw (0,2)--(1,2);
         \draw (0,3)--(1,3);
         \draw (2,0)--(2,1);
         \draw (3,0)--(3,1);
 			\node  at (2.6,2.6) {$p$};
 			\filldraw (1,1) circle (4pt) node[above left] {};
 			\end{tikzpicture}}}
 & \footnotesize{horizontal symmetric shadings} &  \footnotesize{Subsection~\ref{subsec-X_2-1}}
 \\[5pt]
\cline{3-4}
&  & $\pattern{scale = 0.5}{2}{}{0/0,1/0,1/1,1/2,2/0}$ $\pattern{scale = 0.5}{2}{}{0/2,1/0,1/1,1/2,2/2}$ $\pattern{scale = 0.5}{2}{}{0/0,0/1,1/1,2/0,2/1}$ $\pattern{scale = 0.5}{2}{}{0/1,0/2,1/1,2/1,2/2}$ $\pattern{scale = 0.5}{2}{}{0/1,0/2,1/1,1/2}$  $\pattern{scale = 0.5}{2}{}{0/0,0/2,1/0,1/1,1/2,2/2}$ $\pattern{scale = 0.5}{2}{}{0/0,1/0,1/1,1/2,2/0,2/2}$ $\pattern{scale = 0.5}{2}{}{0/0,0/1,0/2,1/1,2/1,2/2}$ $\pattern{scale = 0.5}{2}{}{0/0,0/1,1/1,2/0,2/1,2/2}$ & \footnotesize{Subsection~\ref{subsec-X_2-2}}
 \\[5pt]
\hline
\multirow{2.6}{0.7cm}{$Y^{(2)}$} & \multirow{2.6}{1.1cm}{\raisebox{0.6ex}{\begin{tikzpicture}[scale = 0.2, baseline=(current bounding box.center)]
	\foreach \x/\y in {0/0,1/0,2/0,0/3}		
		\fill[gray!50] (\x,\y) rectangle +(1,1); 	
        \draw (0,1)--(4,1);
        \draw (1,0)--(1,4);
        \draw (0,2)--(1,2);
         \draw (0,3)--(1,3);
         \draw (2,0)--(2,1);
         \draw (3,0)--(3,1);
 			\node  at (2.6,2.6) {$p$};
 			\filldraw (1,1) circle (4pt) node[above left] {};
 			\end{tikzpicture}}}
 & \footnotesize{vertical symmetric shadings} & \footnotesize{Subsection~\ref{subsec-X_2-1}}
 \\[5pt]
\cline{3-4}
&  & $\pattern{scale = 0.5}{2}{}{0/0,0/1,0/2,1/1,2/1}$ $\pattern{scale = 0.5}{2}{}{0/1,1/1,2/0,2/1,2/2}$ $\pattern{scale = 0.5}{2}{}{0/0,0/2,1/0,1/1,1/2}$ $\pattern{scale = 0.5}{2}{}{1/0,1/1,1/2,2/0,2/2}$ $\pattern{scale = 0.5}{2}{}{1/0,1/1,2/0,2/1}$  $\pattern{scale = 0.5}{2}{}{0/0,0/1,1/1,2/0,2/1,2/2}$  $\pattern{scale = 0.5}{2}{}{0/0,0/1,0/2,1/1,2/1,2/2}$ $\pattern{scale = 0.5}{2}{}{0/0,1/0,1/1,1/2,2/0,2/2}$ $\pattern{scale = 0.5}{2}{}{0/0,0/2,1/0,1/1,1/2,2/2}$& \footnotesize{Subsection~\ref{subsec-X_2-2}} 
 \\[5pt]
\hline
\multirow{2.6}{0.7cm}{$X^{(3)}$} & 
\multirow{2.6}{1.1cm}{\raisebox{0.5ex}{\begin{tikzpicture}[scale = 0.2, baseline=(current bounding box.center)]
	\foreach \x/\y in {0/0,0/1,0/3,2/0}		
		\fill[gray!50] (\x,\y) rectangle +(1,1); 	
        \draw (0,1)--(4,1);
        \draw (1,0)--(1,4);
        \draw (0,2)--(1,2);
         \draw (0,3)--(1,3);
         \draw (2,0)--(2,1);
         \draw (3,0)--(3,1);
 			\node  at (2.6,2.6) {$p$};
 			\filldraw (1,1) circle (4pt) node[above left] {};
 			\end{tikzpicture}}}
 & \footnotesize{ vertical symmetric shadings} & \footnotesize{Subsection~\ref{subsec-X_3-1}}
 \\[5pt]
\cline{3-4}
&  & $\pattern{scale = 0.5}{2}{}{0/0,0/2,1/0,1/1,1/2,2/2}$  & \footnotesize{Subsection~\ref{subsec-X_3-2}}
 \\[5pt]
\hline
\multirow{2.6}{0.7cm}{$Y^{(3)}$}& 
\multirow{2.6}{1.1cm}{\raisebox{0.5ex}{\begin{tikzpicture}[scale = 0.2, baseline=(current bounding box.center)]
	\foreach \x/\y in {0/0,1/0,3/0,0/2}		
		\fill[gray!50] (\x,\y) rectangle +(1,1); 	
        \draw (0,1)--(4,1);
        \draw (1,0)--(1,4);
        \draw (0,2)--(1,2);
         \draw (0,3)--(1,3);
         \draw (2,0)--(2,1);
         \draw (3,0)--(3,1);
 			\node  at (2.6,2.6) {$p$};
 			\filldraw (1,1) circle (4pt) node[above left] {};
 			\end{tikzpicture}}}
 & \footnotesize{horizontal symmetric shadings} & \footnotesize{Subsection~\ref{subsec-X_3-1}}
 \\[5pt]
\cline{3-4}
&  &$\pattern{scale = 0.5}{2}{}{0/0,0/1,1/1,2/0,2/1,2/2}$ & \footnotesize{Subsection~\ref{subsec-X_3-2}}
 \\[5pt]
\hline
\multirow{2.6}{0.7cm}{$X^{(4)}$}  &
\multirow{2.6}{1.1cm}{\raisebox{0.6ex}{\begin{tikzpicture}[scale = 0.2, baseline=(current bounding box.center)]
	\foreach \x/\y in {0/0,0/1,2/0,3/0}		
		\fill[gray!50] (\x,\y) rectangle +(1,1); 	
        \draw (0,1)--(4,1);
        \draw (1,0)--(1,4);
        \draw (0,2)--(1,2);
         \draw (0,3)--(1,3);
         \draw (2,0)--(2,1);
         \draw (3,0)--(3,1);
 			\node  at (2.6,2.6) {$p$};
 			\filldraw (1,1) circle (4pt) node[above left] {};
 			\end{tikzpicture}}}
 &  $\pattern{scale = 0.5}{2}{}{0/2,1/0,1/1,1/2,2/2}$ $\pattern{scale = 0.5}{2}{}{0/1,1/1,2/0,2/1,2/2}$   $\pattern{scale = 0.5}{2}{}{0/0,0/2,1/0,1/1,1/2,2/2}$ $\pattern{scale = 0.5}{2}{}{0/0,1/0,1/1,1/2,2/0,2/2}$ $\pattern{scale = 0.5}{2}{}{0/0,0/1,1/1,2/0,2/1,2/2}$ &
 \footnotesize{Subsection~\ref{subsec-5.1}}  \\[5pt]
\cline{3-4}
& & $\pattern{scale = 0.5}{2}{}{1/0,1/1,2/0,2/1}$ & \footnotesize{Subsection~\ref{subsec-X_4-2}} 
 \\[5pt]
\hline
\multirow{2.6}{0.7cm}{$Y^{(4)}$}  &
\multirow{2.6}{1.1
cm}{\raisebox{0.6ex}{\begin{tikzpicture}[scale = 0.2, baseline=(current bounding box.center)]
	\foreach \x/\y in {0/0,1/0,0/2,0/3}		
		\fill[gray!50] (\x,\y) rectangle +(1,1); 	
        \draw (0,1)--(4,1);
        \draw (1,0)--(1,4);
        \draw (0,2)--(1,2);
         \draw (0,3)--(1,3);
         \draw (2,0)--(2,1);
         \draw (3,0)--(3,1);
 			\node  at (2.6,2.6) {$p$};
 			\filldraw (1,1) circle (4pt) node[above left] {};
 			\end{tikzpicture}}}
 &   $\pattern{scale = 0.5}{2}{}{0/1,1/1,2/0,2/1,2/2}$ $\pattern{scale = 0.5}{2}{}{0/2,1/0,1/1,1/2,2/2}$ $\pattern{scale = 0.5}{2}{}{0/0,0/1,1/1,2/0,2/1,2/2}$ $\pattern{scale = 0.5}{2}{}{0/0,0/1,0/2,1/1,2/1,2/2}$  $\pattern{scale = 0.5}{2}{}{0/0,0/2,1/0,1/1,1/2,2/2}$&
 \footnotesize{Subsection~\ref{subsec-5.1}}
 \\[5pt]
\cline{3-4}
& & $\pattern{scale = 0.5}{2}{}{0/1,0/2,1/1,1/2}$ & \footnotesize{Subsection~\ref{subsec-X_4-2}}  
 \\[5pt]
\hline
	\end{tabular}
\end{center} 
}
\vspace{-0.5cm}
 	\caption{Shadings of mesh patterns 123 and 132 for which joint equidistributions  are established in this paper.}\label{tab-sy-shadings}
\end{table}

Let $\T(n,k,\ell)$ be the set of $n$-permutations with $k$ occurrences of $q_1$ and $\ell$ occurrences of $q_2$ in questions. We define $T_{n,k,\ell}=|\T(n,k,\ell)|$ and the generating function
\[T_n(x,y)=\sum_{k=0}^{n-2}\sum_{\ell=0}^{n-2}T_{n,k,\ell}\, x^k \,y^\ell.\]
Analogously, define the set $\mathcal{H}(n,k,\ell)$, $H_{n,k,\ell}=|\mathcal{H}(n,k,\ell)|$, and 
\[H_n(x,y)=\sum_{k=0}^{n-1}\sum_{\ell=0}^{n-1}H_{n,k,\ell}\, x^k \,y^\ell,\]
for the pair $\{p_1,\hspace{0.1cm}p_2\}$.

\section{Mesh patterns of types $X^{(1)}$ and $Y^{(1)}$}\label{sec-the first element}
In this section, we consider the patterns of types $X^{(1)}=\raisebox{0.5ex}{\begin{tikzpicture}[scale = 0.2, baseline=(current bounding box.center)]
	\foreach \x/\y in {0/0,0/1,0/2,0/3}		
		\fill[gray!50] (\x,\y) rectangle +(1,1); 	
        \draw (0,1)--(4,1);
        \draw (1,0)--(1,4);
        \draw (0,2)--(1,2);
         \draw (0,3)--(1,3);
         \draw (2,0)--(2,1);
         \draw (3,0)--(3,1);
 			\node  at (2.6,2.6) {$p$};
 			\filldraw (1,1) circle (4pt) node[above left] {};
 			\end{tikzpicture}}$ and $Y^{(1)}=\raisebox{0.5ex}{\begin{tikzpicture}[scale = 0.2, baseline=(current bounding box.center)]
	\foreach \x/\y in {0/0,1/0,2/0,3/0}		
		\fill[gray!50] (\x,\y) rectangle +(1,1); 	
        \draw (0,1)--(4,1);
        \draw (1,0)--(1,4);
        \draw (0,2)--(1,2);
         \draw (0,3)--(1,3);
         \draw (2,0)--(2,1);
         \draw (3,0)--(3,1);
 			\node  at (2.6,2.6) {$p$};
 			\filldraw (1,1) circle (4pt) node[above left] {};
 			\end{tikzpicture}}$.   The following result allows us to reduce considerations to length-2 mesh patterns in this case. 
			
\begin{lem}\label{reduction-lemma} For type  $X^{(1)}$ patterns, we have $q_1\sim_{jd}q_2$ if and only if $p_1\sim_{jd}p_2$. \end{lem}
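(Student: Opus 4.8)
The plan is to reduce the whole question to the length-two pair $\{p_1,p_2\}$ by a fibration over the first entry of the permutation, and then to convert a counting identity between the numbers $T_{n,k,\ell}$ and $H_{j,k,\ell}$ into the claimed equivalence of joint distributions. \textbf{Step 1 (what the shading forces).} In any occurrence of $q_1$ or $q_2$ in $\pi=\pi_1\cdots\pi_n$, the four shaded boxes $\boks{0}{0},\boks{0}{1},\boks{0}{2},\boks{0}{3}$ of the leftmost column together cover every position strictly to the left of the first chosen position; hence that position must be $1$. Since $\tau\in\{123,132\}$ has its value $1$ in the first slot, $\pi_1$ is the smallest of the three chosen entries, so the other two chosen entries lie among the entries of $\pi_2\cdots\pi_n$ that exceed $\pi_1$.

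\textbf{Step 2 (the occurrence correspondence).} Let $\sigma=\sigma(\pi)\in S_{n-\pi_1}$ be the standardization of the subword of $\pi_2\cdots\pi_n$ formed by the entries greater than $\pi_1$. The standardization map is an order- and position-preserving bijection between those entries of $\pi$ and the entries of $\sigma$, and I would check that it carries occurrences of $q_1$ in $\pi$ bijectively onto occurrences of $p_1$ in $\sigma$ (and $q_2$ onto $p_2$): an occurrence of $q_1$ is $\pi_1$ together with two larger entries at positions $\ge 2$, which map to two entries of $\sigma$; conversely, prepending $\pi_1$ in position $1$ to an occurrence of $p_1$ in $\sigma$ produces an occurrence of $q_1$ in $\pi$. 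The one point that needs verification is that the shading of $q_1$ inherited from $p_1$, i.e.\ the boxes $\boks{a+1}{b+1}$ with $\boks{a}{b}$ shaded in $p_1$, lies entirely in the region ``positions $\ge 2$, values $>\pi_1$'' — which is exactly where $\sigma$ lives — so under the bijection it matches the shading of $p_1$ box for box, while the remaining bottom-row boxes $\boks{1}{0},\boks{2}{0},\boks{3}{0}$ of $q_1$ are unshaded and impose nothing. Thus $\#q_1(\pi)=\#p_1(\sigma)$ and $\#q_2(\pi)=\#p_2(\sigma)$, where $\#$ counts occurrences.

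\textbf{Step 3 (fibre count and the identity).} For fixed $m\in[n]$ and fixed $\rho\in S_{n-m}$, a permutation $\pi\in S_n$ with $\pi_1=m$ and $\sigma(\pi)=\rho$ is obtained by choosing which of positions $2,\dots,n$ carry the $m-1$ entries smaller than $m$ and in which order (the $n-m$ larger entries are then forced into the remaining positions so as to realize $\rho$); hence there are $\binom{n-1}{m-1}(m-1)!=\tfrac{(n-1)!}{(n-m)!}$ such $\pi$, independently of $\rho$. Combining this with Step 2 gives
\[
T_{n,k,\ell}=\sum_{m=1}^{n}\frac{(n-1)!}{(n-m)!}\,H_{n-m,k,\ell}=\sum_{j=0}^{n-1}\frac{(n-1)!}{j!}\,H_{j,k,\ell},\qquad n\ge1,\ k,\ell\ge0,
\]
equivalently $T_n(x,y)=\sum_{j=0}^{n-1}\frac{(n-1)!}{j!}H_j(x,y)$.

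\textbf{Step 4 (both implications).} If $p_1\sim_{jd}p_2$, then $H_{j,k,\ell}=H_{j,\ell,k}$ for all $j,k,\ell$, and the displayed identity gives $T_{n,k,\ell}=T_{n,\ell,k}$, i.e.\ $q_1\sim_{jd}q_2$. Conversely, if $q_1\sim_{jd}q_2$, fix $k,\ell$ and put $D_j=H_{j,k,\ell}-H_{j,\ell,k}$; then $\sum_{j=0}^{n-1}\frac{(n-1)!}{j!}D_j=0$ for every $n\ge1$, and induction on $n$ (the $n$-th equation reduces to $D_{n-1}=0$ once $D_0=\cdots=D_{n-2}=0$) yields $D_j=0$ for all $j$, so $p_1\sim_{jd}p_2$. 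The only genuinely delicate part is the occurrence correspondence of Step 2 — checking that the active boxes of the shading of $q_1$ are precisely the images of the boxes of $p_1$ under the shift $\boks{a}{b}\mapsto\boks{a+1}{b+1}$; Steps 1, 3, and 4 are routine bookkeeping and elementary linear algebra.
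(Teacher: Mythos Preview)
Your proof is correct and follows the same approach as the paper: both arguments rest on the observation that every occurrence of $q_i$ in $\pi$ must begin with $\pi_1$ and corresponds bijectively to an occurrence of $p_i$ in the standardization of the entries of $\pi$ exceeding $\pi_1$. Your version is more explicit than the paper's---you write out the fibre count $T_{n,k,\ell}=\sum_{j=0}^{n-1}\frac{(n-1)!}{j!}H_{j,k,\ell}$ (an identity the paper itself only records later, in the proof of Theorem~\ref{coro-pairs-17-36}) and invoke the triangularity of this system to obtain the converse direction, whereas the paper's proof simply asserts the equivalence as ``clear'' after the structural observation.
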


\begin{proof}
Let $\pi=\pi_1\cdots\pi_n\in S_n$, $A=\{\pi_i:\pi_i>\pi_1, 2\leq i \leq n\}$ and $B=\{\pi_i:\pi_i<\pi_1, 2\leq i \leq n\}$. Each occurrence of $q_1$ (resp., $q_2$), if any, must begin with $\pi_1$, and $p_1$ (resp., $p_2$) will involve the elements in $A$, and no elements in $B$. But then it is clear that the joint equidistribution of $q_1$ and $q_2$ is equivalent to the joint equidistribution of $p_1$ and $p_2$, which completes our proof. 
\end{proof}

\subsection{Cases proved via complementation or reversal}\label{subsec-X_1-1}
This subsection is about the 32 pairs presented in Table~\ref{tab-X_1-1}. Each pair \( \{p_1, p_2\} \) associated with the mesh pattern \( X^{(1)}_j \), where \( j \in \{1, \ldots, 16\} \), exhibits symmetry either horizontally or vertically. Therefore, the map \( f \) that gives the joint equidistributions can be constructed by applying the complement or reverse operations. Moreover, the same joint distribution of the pairs in each row of Table~\ref{tab-X_1-1} follows from the complement, reverse, and inverse operations. Here, we demonstrate the operations for the pairs $\{X^{(1)}_i\}_{i=1}^{4}$ as an example, and the remaining three sets of pairs of mesh patterns are treated in the same manner:
\[
\pattern{scale = 0.5}{2}{1/1,2/2}{0/0,0/1,0/2,1/1,2/1} \pattern{scale = 0.5}{2}{1/2,2/1}{0/0,0/1,0/2,1/1,2/1} \xrightarrow{r} 
\pattern{scale = 0.5}{2}{1/2,2/1}{0/1,1/1,2/0,2/1,2/2} \pattern{scale = 0.5}{2}{1/1,2/2}{0/1,1/1,2/0,2/1,2/2} \xrightarrow{i} 
\pattern{scale = 0.5}{2}{1/2,2/1}{0/2,1/0,1/2,1/1,2/2} \pattern{scale = 0.5}{2}{1/1,2/2}{0/2,1/0,1/2,1/1,2/2} \xrightarrow{c} 
\pattern{scale = 0.5}{2}{1/1,2/2}{0/0,1/0,1/1,1/2,2/0} \pattern{scale = 0.5}{2}{1/2,2/1}{0/0,1/0,1/1,1/2,2/0}.
\]

\begin{table}[!ht]
 	{
 		\renewcommand{\arraystretch}{1.7}
 		\setlength{\tabcolsep}{5pt}
 \begin{center} 
 		\begin{tabular}{c | c || c | c || c | c || c | c}
 			\hline
 		\footnotesize{Nr.}	& \footnotesize {Patterns} & \footnotesize{Nr.}	& \footnotesize{Patterns} & \footnotesize {Nr.}  & \footnotesize{Patterns} &\footnotesize {Nr.}  & \footnotesize{Patterns}\\
 			\hline
 			\hline
$X^{(1)}_{1}$ & $\pattern{scale = 0.5}{3}{1/1,2/2,3/3}{0/0,0/1,0/2,0/3,1/1,1/2,1/3,2/2,3/2}\pattern{scale = 0.5}{3}{1/1,2/3,3/2}{0/0,0/1,0/2,0/3,1/1,1/2,1/3,2/2,3/2}$ &
$X^{(1)}_2$ & $\pattern{scale = 0.5}{3}{1/1,2/2,3/3}{0/0,0/1,0/2,0/3,1/2,2/2,3/1,3/2,3/3}\pattern{scale = 0.5}{3}{1/1,2/3,3/2}{0/0,0/1,0/2,0/3,1/2,2/2,3/1,3/2,3/3}$& 
$X^{(1)}_3$& $\pattern{scale = 0.5}{3}{1/1,2/2,3/3}{0/0,0/1,0/2,0/3,1/1,2/1,2/2,2/3,3/1}\pattern{scale = 0.5}{3}{1/1,2/3,3/2}{0/0,0/1,0/2,0/3,1/1,2/1,2/2,2/3,3/1}$ & 
$X^{(1)}_4$& $\pattern{scale = 0.5}{3}{1/1,2/2,3/3}{0/0,0/1,0/2,0/3,1/3,2/1,2/2,2/3,3/3}\pattern{scale = 0.5}{3}{1/1,2/3,3/2}{0/0,0/1,0/2,0/3,1/3,2/1,2/2,2/3,3/3}$ \\[5pt]
$X^{(1)}_5$ & $\pattern{scale = 0.5}{3}{1/1,2/2,3/3}{0/0,0/1,0/2,0/3,1/1,1/2,2/2,3/1,3/2}\pattern{scale = 0.5}{3}{1/1,2/3,3/2}{0/0,0/1,0/2,0/3,1/1,1/2,2/2,3/1,3/2}$ & 
$X^{(1)}_6$ & $\pattern{scale = 0.5}{3}{1/1,2/2,3/3}{0/0,0/1,0/2,0/3,1/2,1/3,2/2,3/2,3/3}\pattern{scale = 0.5}{3}{1/1,2/3,3/2}{0/0,0/1,0/2,0/3,1/2,1/3,2/2,3/2,3/3}$ &
$X^{(1)}_7$ & $\pattern{scale = 0.5}{3}{1/1,2/2,3/3}{0/0,0/1,0/2,0/3,1/1,1/3,2/1,2/2,2/3}\pattern{scale = 0.5}{3}{1/1,2/3,3/2}{0/0,0/1,0/2,0/3,1/1,1/3,2/1,2/2,2/3}$ & 
$X^{(1)}_8$ & $\pattern{scale = 0.5}{3}{1/1,2/2,3/3}{0/0,0/1,0/2,0/3,2/1,2/2,2/3,3/1,3/3}\pattern{scale = 0.5}{3}{1/1,2/3,3/2}{0/0,0/1,0/2,0/3,2/1,2/2,2/3,3/1,3/3}$ \\[5pt]
$X^{(1)}_9$ & $\pattern{scale = 0.5}{3}{1/1,2/2,3/3}{0/0,0/1,0/2,0/3,1/1,1/2,3/1,3/2}\pattern{scale = 0.5}{3}{1/1,2/3,3/2}{0/0,0/1,0/2,0/3,1/1,1/2,3/1,3/2}$ &
$X^{(1)}_{10}$ & $\pattern{scale = 0.5}{3}{1/1,2/2,3/3}{0/0,0/1,0/2,0/3,1/2,1/3,3/2,3/3}\pattern{scale = 0.5}{3}{1/1,2/3,3/2}{0/0,0/1,0/2,0/3,1/2,1/3,3/2,3/3}$ &
$X^{(1)}_{11}$ & $\pattern{scale = 0.5}{3}{1/1,2/2,3/3}{0/0,0/1,0/2,0/3,1/1,1/3,2/1,2/3}\pattern{scale = 0.5}{3}{1/1,2/3,3/2}{0/0,0/1,0/2,0/3,1/1,1/3,2/1,2/3}$ &
$X^{(1)}_{12}$ & $\pattern{scale = 0.5}{3}{1/1,2/2,3/3}{0/0,0/1,0/2,0/3,2/1,2/3,3/1,3/3}\pattern{scale = 0.5}{3}{1/1,2/3,3/2}{0/0,0/1,0/2,0/3,2/1,2/3,3/1,3/3}$ \\[5pt]
$X^{(1)}_{13}$ & $\pattern{scale = 0.5}{3}{1/1,2/2,3/3}{0/0,0/1,0/2,0/3,1/1,1/2,1/3,3/2}\pattern{scale = 0.5}{3}{1/1,2/3,3/2}{0/0,0/1,0/2,0/3,1/1,1/2,1/3,3/2}$ &
$X^{(1)}_{14}$ & $\pattern{scale = 0.5}{3}{1/1,2/2,3/3}{0/0,0/1,0/2,0/3,1/2,3/1,3/2,3/3}\pattern{scale = 0.5}{3}{1/1,2/3,3/2}{0/0,0/1,0/2,0/3,1/2,3/1,3/2,3/3}$ &
$X^{(1)}_{15}$ & $\pattern{scale = 0.5}{3}{1/1,2/2,3/3}{0/0,0/1,0/2,0/3,1/1,2/1,2/3,3/1}\pattern{scale = 0.5}{3}{1/1,2/3,3/2}{0/0,0/1,0/2,0/3,1/1,2/1,2/3,3/1}$ &
$X^{(1)}_{16}$ & $\pattern{scale = 0.5}{3}{1/1,2/2,3/3}{0/0,0/1,0/2,0/3,1/3,2/1,2/3,3/3}\pattern{scale = 0.5}{3}{1/1,2/3,3/2}{0/0,0/1,0/2,0/3,1/3,2/1,2/3,3/3}$ \\[5pt]
\hline
$Y^{(1)}_1$ & $\pattern{scale = 0.5}{3}{1/1,2/2,3/3}{0/0,1/0,1/1,2/0,2/1,2/2,2/3,3/0,3/1}\pattern{scale = 0.5}{3}{1/1,2/3,3/2}{0/0,1/0,1/1,2/0,2/1,2/2,2/3,3/0,3/1}$ &
 $Y^{(1)}_2$ & $\pattern{scale = 0.5}{3}{1/1,2/2,3/3}{0/0,1/0,1/3,2/0,2/1,2/2,2/3,3/0,3/3}\pattern{scale = 0.5}{3}{1/1,2/3,3/2}{0/0,1/0,1/3,2/0,2/1,2/2,2/3,3/0,3/3}$&
$Y^{(1)}_3$ & $\pattern{scale = 0.5}{3}{1/1,2/2,3/3}{0/0,1/0,1/1,1/2,1/3,2/0,2/2,3/0,3/2}\pattern{scale = 0.5}{3}{1/1,2/3,3/2}{0/0,1/0,1/1,1/2,1/3,2/0,2/2,3/0,3/2}$ & 
$Y^{(1)}_4$ & $\pattern{scale = 0.5}{3}{1/1,2/2,3/3}{0/0,1/0,1/2,2/0,2/2,3/0,3/3,3/1,3/2}\pattern{scale = 0.5}{3}{1/1,2/3,3/2}{0/0,1/0,1/2,2/0,2/2,3/0,3/3,3/1,3/2}$
\\[5pt]
$Y^{(1)}_5$ & $\pattern{scale = 0.5}{3}{1/1,2/2,3/3}{0/0,1/0,1/1,1/3,2/0,2/1,2/2,2/3,3/0}\pattern{scale = 0.5}{3}{1/1,2/3,3/2}{0/0,1/0,1/1,1/3,2/0,2/1,2/2,2/3,3/0}$&
$Y^{(1)}_6$ & $\pattern{scale = 0.5}{3}{1/1,2/2,3/3}{0/0,1/0,2/0,2/1,2/2,2/3,3/0,3/1,3/3}\pattern{scale = 0.5}{3}{1/1,2/3,3/2}{0/0,1/0,2/0,2/1,2/2,2/3,3/0,3/1,3/3}$ &
$Y^{(1)}_7$ & $\pattern{scale = 0.5}{3}{1/1,2/2,3/3}{0/0,1/0,1/1,1/2,2/0,2/2,3/0,3/1,3/2}\pattern{scale = 0.5}{3}{1/1,2/3,3/2}{0/0,1/0,1/1,1/2,2/0,2/2,3/0,3/1,3/2}$& 
$Y^{(1)}_8$ & $\pattern{scale = 0.5}{3}{1/1,2/2,3/3}{0/0,1/0,1/2,1/3,2/0,2/2,3/0,3/3,3/2}\pattern{scale = 0.5}{3}{1/1,2/3,3/2}{0/0,1/0,1/2,1/3,2/0,2/2,3/0,3/3,3/2}$
\\[5pt]
$Y^{(1)}_9$ & $\pattern{scale = 0.5}{3}{1/1,2/2,3/3}{0/0,1/0,1/1,1/3,2/0,2/1,2/3,3/0}\pattern{scale = 0.5}{3}{1/1,2/3,3/2}{0/0,1/0,1/1,1/3,2/0,2/1,2/3,3/0}$ &
$Y^{(1)}_{10}$ & $\pattern{scale = 0.5}{3}{1/1,2/2,3/3}{0/0,1/0,2/0,2/1,2/3,3/0,3/1,3/3}\pattern{scale = 0.5}{3}{1/1,2/3,3/2}{0/0,1/0,2/0,2/1,2/3,3/0,3/1,3/3}$ &
$Y^{(1)}_{11}$ & $\pattern{scale = 0.5}{3}{1/1,2/2,3/3}{0/0,1/0,1/1,1/2,2/0,3/0,3/1,3/2}\pattern{scale = 0.5}{3}{1/1,2/3,3/2}{0/0,1/0,1/1,1/2,2/0,3/0,3/1,3/2}$& 
$Y^{(1)}_{12}$ & $\pattern{scale = 0.5}{3}{1/1,2/2,3/3}{0/0,1/0,1/2,1/3,2/0,3/0,3/2,3/3}\pattern{scale = 0.5}{3}{1/1,2/3,3/2}{0/0,1/0,1/2,1/3,2/0,3/0,3/2,3/3}$ \\[5pt]
$Y^{(1)}_{13}$ & $\pattern{scale = 0.5}{3}{1/1,2/2,3/3}{0/0,1/0,1/1,2/0,2/1,2/3,3/0,3/1}\pattern{scale = 0.5}{3}{1/1,2/3,3/2}{0/0,1/0,1/1,2/0,2/1,2/3,3/0,3/1}$ &
$Y^{(1)}_{14}$ & $\pattern{scale = 0.5}{3}{1/1,2/2,3/3}{0/0,1/0,1/3,2/0,2/1,2/3,3/0,3/3}\pattern{scale = 0.5}{3}{1/1,2/3,3/2}{0/0,1/0,1/3,2/0,2/1,2/3,3/0,3/3}$ &
$Y^{(1)}_{15}$ & $\pattern{scale = 0.5}{3}{1/1,2/2,3/3}{0/0,1/0,1/1,1/2,1/3,2/0,3/0,3/2}\pattern{scale = 0.5}{3}{1/1,2/3,3/2}{0/0,1/0,1/1,1/2,1/3,2/0,3/0,3/2}$ &
$Y^{(1)}_{16}$ & $\pattern{scale = 0.5}{3}{1/1,2/2,3/3}{0/0,1/0,1/2,2/0,3/0,3/1,3/2,3/3}\pattern{scale = 0.5}{3}{1/1,2/3,3/2}{0/0,1/0,1/2,2/0,3/0,3/1,3/2,3/3}$
 \\[5pt]
\hline
	\end{tabular}
\end{center} 
}
\vspace{-0.5cm}
 	\caption{Jointly equidistributed patterns of types of $X^{(1)}$ and $Y^{(1)}$ via complementation or reversal.}\label{tab-X_1-1}
\end{table}

\subsection{Cases proved via swapping elements}\label{subsec-2-2}
This subsection is about the 12 pairs presented in Table~\ref{tab-X_1-2}.

\begin{table}[!ht]
 	{
 		\renewcommand{\arraystretch}{1.7}
 		\setlength{\tabcolsep}{5pt}
 \begin{center} 
 		\begin{tabular}{c | c || c | c || c | c || c | c}
 			\hline
 		\footnotesize{Nr.}	& \footnotesize {Patterns} & \footnotesize{Nr.}	& \footnotesize{Patterns} & \footnotesize {Nr.}  & \footnotesize{Patterns} &\footnotesize {Nr.}  & \footnotesize{Patterns}\\
 			\hline
 			\hline
$X^{(1)}_{17}$ & $\pattern{scale = 0.5}{3}{1/1,2/2,3/3}{0/0,0/1,0/2,0/3,2/1,2/2,3/1,3/2}\pattern{scale = 0.5}{3}{1/1,2/3,3/2}{0/0,0/1,0/2,0/3,2/1,2/2,3/1,3/2}$ &
$X^{(1)}_{18}$ & $\pattern{scale = 0.5}{3}{1/1,2/2,3/3}{0/0,0/1,0/2,0/3,1/2,1/3,2/2,2/3}\pattern{scale = 0.5}{3}{1/1,2/3,3/2}{0/0,0/1,0/2,0/3,1/2,1/3,2/2,2/3}$& 
$X^{(1)}_{19}$ & $\pattern{scale = 0.5}{3}{1/1,2/2,3/3}{0/0,0/1,0/2,0/3,1/1,1/3,2/1,2/2,2/3,3/3}\pattern{scale = 0.5}{3}{1/1,2/3,3/2}{0/0,0/1,0/2,0/3,1/1,1/3,2/1,2/2,2/3,3/3}$&
$X^{(1)}_{20}$ & $\pattern{scale = 0.5}{3}{1/1,2/2,3/3}{0/0,0/1,0/2,0/3,1/1,2/1,2/2,2/3,3/1,3/3}\pattern{scale = 0.5}{3}{1/1,2/3,3/2}{0/0,0/1,0/2,0/3,1/1,2/1,2/2,2/3,3/1,3/3}$\\[5pt]
$X^{(1)}_{21}$ & $\pattern{scale = 0.5}{3}{1/1,2/2,3/3}{0/0,0/1,0/2,0/3,1/1,1/2,1/3,2/2,3/2,3/3}\pattern{scale = 0.5}{3}{1/1,2/3,3/2}{0/0,0/1,0/2,0/3,1/1,1/2,1/3,2/2,3/2,3/3}$ &
$X^{(1)}_{22}$ & $\pattern{scale = 0.5}{3}{1/1,2/2,3/3}{0/0,0/1,0/2,0/3,1/1,1/2,2/2,3/1,3/2,3/3}\pattern{scale = 0.5}{3}{1/1,2/3,3/2}{0/0,0/1,0/2,0/3,1/1,1/2,2/2,3/1,3/2,3/3}$&& &
\\[5pt]
\hline
$Y^{(1)}_{17}$& $\pattern{scale = 0.5}{3}{1/1,2/2,3/3}{0/0,1/0,1/2,1/3,2/0,2/2,2/3,3/0}\pattern{scale = 0.5}{3}{1/1,2/3,3/2}{0/0,1/0,1/2,1/3,2/0,2/2,2/3,3/0}$ & 
$Y^{(1)}_{18}$& $\pattern{scale = 0.5}{3}{1/1,2/2,3/3}{0/0,1/0,2/0,2/1,2/2,3/0,3/1,3/2}\pattern{scale = 0.5}{3}{1/1,2/3,3/2}{0/0,1/0,2/0,2/1,2/2,3/0,3/1,3/2}$&
$Y^{(1)}_{19}$ &$\pattern{scale = 0.5}{3}{1/1,2/2,3/3}{0/0,1/0,1/1,1/2,2/0,2/2,3/0,3/1,3/2,3/3}\pattern{scale = 0.5}{3}{1/1,2/3,3/2}{0/0,1/0,1/1,1/2,2/0,2/2,3/0,3/1,3/2,3/3}$ & 
$Y^{(1)}_{20}$ & $\pattern{scale = 0.5}{3}{1/1,2/2,3/3}{0/0,1/0,1/1,1/2,1/3,2/0,2/2,3/2,3/0,3/3}\pattern{scale = 0.5}{3}{1/1,2/3,3/2}{0/0,1/0,1/1,1/2,1/3,2/0,2/2,3/2,3/0,3/3}$  \\[5pt]
$Y^{(1)}_{21}$ & $\pattern{scale = 0.5}{3}{1/1,2/2,3/3}{0/0,1/0,1/1,2/1,2/0,2/2,2/3,3/0,3/1,3/3}\pattern{scale = 0.5}{3}{1/1,2/3,3/2}{0/0,1/0,1/1,2/1,2/0,2/2,2/3,3/0,3/1,3/3}$ & 
$Y^{(1)}_{22}$ & $\pattern{scale = 0.5}{3}{1/1,2/2,3/3}{0/0,1/0,1/1,1/3,2/0,2/1,2/2,2/3,3/0,3/3}\pattern{scale = 0.5}{3}{1/1,2/3,3/2}{0/0,1/0,1/1,1/3,2/0,2/1,2/2,2/3,3/0,3/3}$ && & \\[5pt]
\hline
	\end{tabular}
\end{center} 
}
\vspace{-0.5cm}
 	\caption{Jointly equidistributed patterns of types $X^{(1)}$ and $Y^{(1)}$ via swapping elements.}\label{tab-X_1-2}
\end{table}

\begin{thm}[\cite{KLv}]\label{thm-KL-1}
    We have $\pattern{scale = 0.5}{2}{1/1,2/2}{0/1,0/0,1/1,1/0}\sim_{jd}\hspace{-1mm}\pattern{scale = 0.5}{2}{1/2,2/1}{0/1,0/0,1/1,1/0}$.
\end{thm}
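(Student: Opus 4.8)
The plan is to encode each permutation by its rank sequence (a sub-excedant sequence, essentially a Lehmer-type code), translate both mesh patterns into conditions on that sequence, and then apply a completely elementary coordinatewise bijection. For $\pi=\pi_1\cdots\pi_n\in S_n$ and $1\le j\le n$, set $r_j(\pi)=|\{\,i\le j:\pi_i\le\pi_j\,\}|$, the rank of $\pi_j$ among $\pi_1,\dots,\pi_j$. It is classical that $\pi\mapsto(r_1(\pi),\dots,r_n(\pi))$ is a bijection from $S_n$ onto $\{1\}\times\{1,2\}\times\cdots\times\{1,\dots,n\}$; in particular $r_1(\pi)=1$ for every $\pi$, and $r_j(\pi)=2$ forces $j\ge2$.

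The first step is to characterise occurrences. Translating the common shading $\{(0,0),(0,1),(1,0),(1,1)\}$, an occurrence of $\pattern{scale = 0.5}{2}{1/1,2/2}{0/1,0/0,1/1,1/0}$ is a pair $i<j$ with $\pi_i<\pi_j$ such that every entry in positions $1,\dots,j-1$ other than $\pi_i$ exceeds $\pi_j$; equivalently, $\pi_j$ has exactly one smaller predecessor, so there is an occurrence ending at $j$ if and only if $r_j(\pi)=2$, and then it is unique. Likewise an occurrence of $\pattern{scale = 0.5}{2}{1/2,2/1}{0/1,0/0,1/1,1/0}$ is a pair $i<j$ with $\pi_i>\pi_j$ in which $\pi_i$ must be smaller than every earlier entry, i.e.\ $\pi_j$ is a strict left-to-right minimum with $j\ge2$; so there is an occurrence ending at $j$ if and only if $r_j(\pi)=1$ and $j\ge2$, again unique. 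Writing $\alpha(\pi)$ and $\beta(\pi)$ for the numbers of occurrences of the two patterns, this yields $\alpha(\pi)=|\{\,j:r_j(\pi)=2\,\}|$ and $\beta(\pi)=|\{\,j\ge2:r_j(\pi)=1\,\}|$.

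Given this, the bijection is immediate: for $j\ge2$ let $\sigma_j$ be the transposition of $\{1,\dots,j\}$ swapping $1$ and $2$ (and $\sigma_1=\mathrm{id}$), and define $f\colon S_n\to S_n$ by $r_j(f(\pi))=\sigma_j(r_j(\pi))$ for all $j$. Each $\sigma_j$ is a bijection of $\{1,\dots,j\}$, so $f$ is a well-defined involution on $S_n$, and coordinatewise it turns the ranks equal to $2$ into ranks equal to $1$ and vice versa while fixing all other ranks (and $r_1$); hence $\alpha(f(\pi))=\beta(\pi)$ and $\beta(f(\pi))=\alpha(\pi)$, so $f$ carries the permutations with $k$ occurrences of the first pattern and $\ell$ of the second onto those with $\ell$ and $k$, which is precisely $p_1\sim_{jd}p_2$.

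The only genuinely delicate point is the occurrence characterisation in the first step: one must check carefully that the four shaded boxes, once interpreted as ``positions before $j$'' against ``values below $\pi_j$ (resp.\ between $\pi_i$ and $\pi_j$)'', force exactly the statement that $\pi_i$ is the unique smaller predecessor of $\pi_j$ (resp.\ that $\pi_i$ is the minimal predecessor and $\pi_j$ a new left-to-right minimum), and that no occurrence can end at $j$ once the relevant rank condition fails. After that, the classical bijectivity of the rank encoding and the coordinate swap are routine, and a small table for $n\le 3$ serves as a sanity check.
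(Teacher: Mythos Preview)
Your proof is correct. The occurrence characterisation is exactly right: for the $12$-pattern the four shaded boxes together forbid any entry below $\pi_j$ among positions $1,\dots,j-1$ other than $\pi_i$, so an occurrence ending at $j$ exists iff $r_j(\pi)=2$; for the $21$-pattern the boxes $(0,0)$ and $(1,0)$ already force $\pi_j$ to be a left-to-right minimum (hence $r_j(\pi)=1$), and then $(0,1),(1,1)$ pin down $\pi_i$ as the minimum of the prefix. The Lehmer-code involution swapping ranks $1$ and $2$ in each coordinate $j\ge 2$ then exchanges the two statistics cleanly.

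Regarding comparison with the paper: there is nothing to compare, since the paper does not prove this statement. Theorem~\ref{thm-KL-1} is quoted from \cite{KL} as a known input and is used (via reverse/complement and Lemma~\ref{reduction-lemma}) to deduce the joint equidistribution for the pairs $X^{(1)}_{17}$ and $X^{(1)}_{18}$. The closest in-house argument in spirit is Lemma~\ref{lem-box-X_2}, which treats the complemented shading $\{(0,1),(0,2),(1,1),(1,2)\}$ by an iterative top-down swapping of adjacent extremal elements rather than by a global code; and Lemma~\ref{lem-pairs-17-36}, which handles related $2\times 2$ block shadings via a recurrence obtained by inserting the largest element. Your rank-sequence approach is arguably cleaner than either of these for this particular shading, because the statistics translate into simple coordinatewise conditions on the code and the bijection becomes a product of independent transpositions; the paper's swapping and recurrence methods, by contrast, generalise more readily to the other shadings treated there, where no such direct code interpretation is available.
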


\begin{thm}[Pairs $X^{(1)}_{17}, X^{(1)}_{18}$, $Y^{(1)}_{17}, Y^{(1)}_{18}$]\label{thm-box-all}
We have $\pattern{scale = 0.5}{3}{1/1,2/2,3/3}{0/0,0/1,0/2,0/3,2/1,2/2,3/1,3/2}\sim_{jd}\hspace{-1mm}\pattern{scale = 0.5}{3}{1/1,2/3,3/2}{0/0,0/1,0/2,0/3,2/1,2/2,3/1,3/2}$, $\pattern{scale = 0.5}{3}{1/1,2/2,3/3}{0/0,0/1,0/2,0/3,1/2,1/3,2/2,2/3}\sim_{jd}\hspace{-1mm}\pattern{scale = 0.5}{3}{1/1,2/3,3/2}{0/0,0/1,0/2,0/3,1/2,1/3,2/2,2/3}$, $\pattern{scale = 0.5}{3}{1/1,2/2,3/3}{0/0,1/0,1/2,1/3,2/0,2/2,2/3,3/0}\sim_{jd}\hspace{-1mm}\pattern{scale = 0.5}{3}{1/1,2/3,3/2}{0/0,1/0,1/2,1/3,2/0,2/2,2/3,3/0}$, and $\pattern{scale = 0.5}{3}{1/1,2/2,3/3}{0/0,1/0,2/0,2/1,2/2,3/0,3/1,3/2}\sim_{jd}\hspace{-1mm}\pattern{scale = 0.5}{3}{1/1,2/3,3/2}{0/0,1/0,2/0,2/1,2/2,3/0,3/1,3/2}$. Moreover, these four pairs have the same joint distribution.
\end{thm}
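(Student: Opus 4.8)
The plan is to reduce the length-$3$ patterns to length $2$ via Lemma~\ref{reduction-lemma}, then quote Theorem~\ref{thm-KL-1} and combine it with the complement, reverse, and inverse operations. I use throughout that, for any mesh pattern $(\tau,R)$ and any permutation $\pi$, the number of occurrences of $(\tau,R)$ in $\pi$ equals the number of occurrences of $(\tau,R)^r$ (resp., $(\tau,R)^c$, $(\tau,R)^i$) in $\pi^r$ (resp., $\pi^c$, $\pi^i$); since $\pi\mapsto\pi^r,\pi^c,\pi^i$ are bijections of $S_n$, each of these operations preserves joint distributions of occurrence statistics.

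\emph{Step 1 (reduction to length $2$).} First I would apply Lemma~\ref{reduction-lemma} to compute the length-$2$ pair $\{p_1,p_2\}$ attached to $X^{(1)}_{17}$ and to $X^{(1)}_{18}$. For $X^{(1)}_{17}$ the fully shaded leftmost column forces every occurrence of $q_1$ or $q_2$ to begin at $\pi_1$, and removing the bottom row and leftmost column of boxes leaves, after the evident relabeling, the boxes $\{\boks{1}{0},\boks{1}{1},\boks{2}{0},\boks{2}{1}\}$, so $\{p_1,p_2\}=\{\,\pattern{scale = 0.5}{2}{1/1,2/2}{1/0,1/1,2/0,2/1}\,,\ \pattern{scale = 0.5}{2}{1/2,2/1}{1/0,1/1,2/0,2/1}\,\}$, which I call the $B$-pair. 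The identical bookkeeping shows $X^{(1)}_{18}$ reduces to $\{p_1,p_2\}=\{\,\pattern{scale = 0.5}{2}{1/1,2/2}{0/1,0/2,1/1,1/2}\,,\ \pattern{scale = 0.5}{2}{1/2,2/1}{0/1,0/2,1/1,1/2}\,\}$, the $C$-pair.

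\emph{Step 2 (equidistributions).} The reverse operation sends the box set $\{\boks{1}{0},\boks{1}{1},\boks{2}{0},\boks{2}{1}\}$ to $\{\boks{0}{0},\boks{0}{1},\boks{1}{0},\boks{1}{1}\}$ and interchanges $12$ with $21$, so it carries the $B$-pair onto the pair $\{\,\pattern{scale = 0.5}{2}{1/1,2/2}{0/0,0/1,1/0,1/1}\,,\ \pattern{scale = 0.5}{2}{1/2,2/1}{0/0,0/1,1/0,1/1}\,\}$ of Theorem~\ref{thm-KL-1}; likewise the complement operation carries the $C$-pair onto the same pair. Hence $p_1\sim_{jd}p_2$ for both the $B$-pair and the $C$-pair, and by Lemma~\ref{reduction-lemma} we get $q_1\sim_{jd}q_2$ for $X^{(1)}_{17}$ and $X^{(1)}_{18}$. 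The inverse operation fixes $123$ and $132$, hence carries the pair $(q_1,q_2)$ of $X^{(1)}_j$ onto that of $Y^{(1)}_j$; being a distribution-preserving bijection of $S_n$, it gives $q_1\sim_{jd}q_2$ for $Y^{(1)}_{17}$ and $Y^{(1)}_{18}$, and moreover shows $T_n(x,y)$ is the same for $X^{(1)}_j$ and $Y^{(1)}_j$, just as announced in Section~\ref{intro}.

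\emph{Step 3 (the four joint distributions coincide).} It remains to see that $X^{(1)}_{17}$ and $X^{(1)}_{18}$ have the same $T_n(x,y)$. The counting in the proof of Lemma~\ref{reduction-lemma} refines to $T_n(x,y)=(n-1)!\sum_{a=0}^{n-1}H_a(x,y)/a!$, where $H_a(x,y)$ is the bivariate distribution of the attached length-$2$ pair, so it is enough that the $B$-pair and the $C$-pair have the same $H_a(x,y)$. Chaining the reverse and complement maps from Step 2, the bijection $\pi\mapsto(\pi^r)^c$ of $S_a$ carries the two occurrence statistics of the $B$-pair onto those of the $C$-pair coordinatewise, so the corresponding series $H_a(x,y)$ agree; equivalently, one can assemble a single bijection of $S_n$ by splitting $\pi$ into $\pi_1$, the standardization $\sigma$ of the subword of entries larger than $\pi_1$, the standardization of the subword of smaller entries, and the shuffle pattern, then replacing $\sigma$ by $(\sigma^r)^c$. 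The only genuine work in all of this is the box-level bookkeeping of Step 1 --- checking case by case that the shading forces an occurrence to begin at $\pi_1$, that the boxes below $\pi_1$ stay unshaded so they do not obstruct the reduction, and that the surviving boxes translate to exactly the claimed $p_1,p_2$ --- which I expect to be routine but is where a slip is most likely.
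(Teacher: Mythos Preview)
Your proposal is correct and follows essentially the same approach as the paper: reduce to the length-$2$ pairs via Lemma~\ref{reduction-lemma}, obtain $p_1\sim_{jd}p_2$ for each by applying reverse (respectively, complement) to the pair of Theorem~\ref{thm-KL-1}, and infer the $Y^{(1)}$ cases from the inverse operation. Your Step~3 makes explicit the ``same joint distribution'' claim that the paper asserts without detail in this proof (and only later justifies via the formula in Theorem~\ref{coro-pairs-17-36}); your refinement $T_n(x,y)=(n-1)!\sum_{a=0}^{n-1}H_a(x,y)/a!$ and the coordinatewise bijection $\sigma\mapsto(\sigma^r)^c$ on the $A$-block are exactly the missing glue.
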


\begin{proof}
Applying the reverse and complement operations to the patterns in Theorem~\ref{thm-KL-1}, we obtain $\pattern{scale = 0.5}{2}{1/1,2/2}{1/0,1/1,2/0,2/1}\sim_{jd}\hspace{-1mm}\pattern{scale = 0.5}{2}{1/2,2/1}{1/0,1/1,2/0,2/1}$ and $\pattern{scale = 0.5}{2}{1/1,2/2}{0/1,0/2,1/1,1/2}\sim_{jd}\hspace{-1mm}\pattern{scale = 0.5}{2}{1/2,2/1}{0/1,0/2,1/1,1/2}$. Hence, by Lemma~\ref{reduction-lemma},  the mesh patterns in the pairs $X_{17}^{(1)}$ and  $X_{18}^{(1)}$ are jointly equidistributed, and the pairs have the same joint distribution, which completes our proof. \end{proof}

The computer also suggests that pairs $\{X^{(1)}_i\}_{i=9}^{18}$ and $\{Y^{(1)}_i\}_{i=9}^{18}$ have the same joint distribution. Theorem~\ref{coro-pairs-17-36} below  will prove this fact by deriving recurrence relations and generating functions for the joint distributions.

Recall that  the {\em unsigned Stirling number of the first kind} (\cite[A132393]{OEIS}) is defined by $c(n,k)=0$ if $n<k$ or $k=0$, except $c(0,0)=1$, and 
\[
c(n,k)=(n-1)c(n-1,k)+c(n-1,k-1).
\]

\begin{thm}\label{coro-pairs-17-36}
The pairs $\{X^{(1)}_i\}_{i=9}^{18}$ and $\{Y^{(1)}_i\}_{i=9}^{18}$ have the same joint distribution. 
Moreover, all pairs satisfy 
\begin{align}\label{recur-pair 8-17}
T_{n,k,\ell}=\left\{  
\begin{array}{ll}  
(n-1)!\,\binom{k+\ell}{k}\,\sum_{i=2}^{n-1}\,\frac{1}{i!}\,c(i-1,k+\ell), & \text{} k \neq 0 \text{ or } \ell \neq 0, \\[6pt]
2(n-1)!, & \text{} k=\ell=0.
\end{array}  
\right.
\end{align}
Equivalently,
\begin{align}\label{genera-pair 8 17}
T_n(x,y)=(n-1)!\sum_{k=1}^{n-1}\sum_{\ell=1}^{n-1}\sum_{i=2}^{n-1}\,\frac{1}{i!}\,\binom{k+\ell}{k}\,c(i-1,k+\ell)\,x^k\,y^\ell+2(n-1)!.
\end{align}
\end{thm}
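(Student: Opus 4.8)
By Lemma~\ref{reduction-lemma}, the joint distribution of each pair $\{q_1,q_2\}$ of type $X^{(1)}_j$ (and similarly $Y^{(1)}_j$) coincides with that of the corresponding length-$2$ pair $\{p_1,p_2\}$, so it suffices to analyze the relevant length-$2$ mesh patterns. The plan is to first identify, for each $j\in\{9,\dots,18\}$, exactly which statistic on $S_{n-1}$ (the set acting on the elements of $A=\{\pi_i:\pi_i>\pi_1\}$, using the notation of Lemma~\ref{reduction-lemma}) is counted by $p_1$ and by $p_2$; I expect that in all ten cases the shadings are arranged so that an occurrence of $p_1$ forces a very rigid local picture — essentially a specified small factor — and likewise for $p_2$, so that the pair of statistics $(k,\ell)$ tracked on $\pi$ is governed by a single combinatorial parameter (the number of positions of a certain type) together with a binary choice at each such position determining whether that occurrence contributes to $q_1$ or to $q_2$. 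That binary choice is what produces the binomial coefficient $\binom{k+\ell}{k}$, and it immediately gives $\{X^{(1)}_i\}\sim_{jd}\{Y^{(1)}_i\}$ as well as the symmetry $T_{n,k,\ell}=T_{n,\ell,k}$ built into~\eqref{recur-pair 8-17}.

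Next I would set up a recurrence. The natural decomposition is by the position and/or value of the largest element (or of $\pi_1$), peeling off one element at a time; each such step contributes a factor accounting for where the new element may go without creating or destroying the rigid local configurations detected by $p_1,p_2$. This should yield a recurrence of the shape
\[
T_{n,k,\ell}=(n-1)\,T_{n-1,k,\ell}+(\text{correction terms involving }T_{n-1,k-1,\ell}\text{ and }T_{n-1,k,\ell-1}),
\]
whose homogeneous part $(n-1)T_{n-1,k,\ell}$ explains the $(n-1)!$ prefactor and whose inhomogeneous part, once the binomial is factored out, reduces to the defining recurrence $c(i,m)=(i-1)c(i-1,m)+c(i-1,m-1)$ for the unsigned Stirling numbers of the first kind with $m=k+\ell$. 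The base case $k=\ell=0$ counts permutations avoiding both patterns; the value $2(n-1)!$ should come from a clean structural description (e.g.\ $\pi_1\in\{1,2\}$, or $\pi_1$ together with the relative order of the remaining block being forced into one of two families).

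Finally, I would verify that the closed form~\eqref{recur-pair 8-17} satisfies the recurrence and the base case by induction on $n$: substitute the formula into the recurrence, use $\binom{k+\ell}{k}=\binom{(k-1)+\ell}{k-1}\cdot\frac{k+\ell}{k+\ell}$-type identities to align the binomials, and collapse the double sum over $i$ using the Stirling recurrence; then~\eqref{genera-pair 8 17} is just~\eqref{recur-pair 8-17} repackaged as a generating function, with the lone $2(n-1)!$ coming from the $(k,\ell)=(0,0)$ term pulled out of the sum. The main obstacle I anticipate is the first step: pinning down precisely why the antipodal shadings in these ten cases force the rigid local configuration and why the $p_1$-vs-$p_2$ contribution at each relevant site is a free binary choice independent across sites — this case analysis over the shadings in Table~\ref{tab-X_1-1} (columns $9$–$16$) and the corresponding swaps is where the real work lies; once that is established, the recurrence and the induction are routine.
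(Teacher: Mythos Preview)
Your opening sentence misreads Lemma~\ref{reduction-lemma}: it does \emph{not} say that the joint distribution of $\{q_1,q_2\}$ coincides with that of $\{p_1,p_2\}$ (indeed $T_n\neq H_n$), only that the equidistribution property transfers. This matters because the explicit formula~\eqref{recur-pair 8-17} contains a sum over $i$, which your proposal never accounts for. The paper obtains it by a two-stage argument you are missing: first it computes, for the length-$2$ pairs, the distribution $H_{n,k,\ell}=\binom{k+\ell}{k}c(n-1,k+\ell)$ (Lemma~\ref{lem-pairs-17-36}); then it lifts to $T_n$ by decomposing $\pi$ according to $i=|A|$, the number of elements exceeding $\pi_1$, giving
\[
T_n(x,y)=2(n-1)!+\sum_{i=2}^{n-1}\binom{n-1}{i}(n-i-1)!\,H_i(x,y),
\]
where the $(n-i-1)!$ is the free choice of the permutation on $B$ and the $2(n-1)!$ comes from $|A|\le 1$. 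Your single recurrence $T_{n,k,\ell}=(n-1)T_{n-1,k,\ell}+\cdots$ with corrections only in $T_{n-1,k-1,\ell}$ and $T_{n-1,k,\ell-1}$ cannot produce that sum over $i$; if you try it, the ``correction'' you need is $H_{n-1,k,\ell}$, not a $T$-term.

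Two further points. First, you plan to analyze all ten shadings $j=9,\dots,18$ individually, but the paper observes (via the complement/reverse/inverse symmetries of Subsection~\ref{subsec-X_1-1} and Theorem~\ref{thm-box-all}) that only three representative length-$2$ pairs need checking, namely those for $X^{(1)}_9$, $X^{(1)}_{13}$, $X^{(1)}_{17}$. Second, your ``rigid local configuration with a free binary choice at each site'' picture is not how the binomial $\binom{k+\ell}{k}$ actually arises in two of the three cases: there the paper instead derives the recurrence $H_n(x,y)=(n+x+y-2)H_{n-1}(x,y)$ by inserting the largest element (your recurrence idea, but at the $H$-level), and the binomial falls out of iterating that; only in case~(ii) does it come from a direct count, and even there it is via a Chu--Vandermonde-type identity for Stirling numbers rather than independent binary choices.
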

By Subsection~\ref{subsec-X_1-1} and Theorem~\ref{thm-box-all}, it suffices to verify that the pairs $X^{(1)}_{9}$, $X^{(1)}_{13}$ and $X^{(1)}_{17}$ satisfy~\eqref{recur-pair 8-17} and~\eqref{genera-pair 8 17}. To achieve this, we will consider length-2 pairs corresponding to these three pairs in the next lemma.

\begin{lem}\label{lem-pairs-17-36}
    The pairs 
    $\{\hspace{-0.1cm}\pattern{scale = 0.5}{2}{1/1,2/2}{0/0,0/1,2/0,2/1},\hspace{-0.1cm}\pattern{scale = 0.5}{2}{1/2,2/1}{0/0,0/1,2/0,2/1}\hspace{0.1cm}\}$, 
    $\{\hspace{-0.1cm}\pattern{scale = 0.5}{2}{1/1,2/2}{0/0,0/1,0/2,2/1},\hspace{-0.1cm}\pattern{scale = 0.5}{2}{1/2,2/1}{0/0,0/1,0/2,2/1}\}$, and $\{\hspace{-0.1cm}\pattern{scale = 0.5}{2}{1/1,2/2}{1/0,1/1,2/0,2/1},\hspace{-0.1cm}\pattern{scale = 0.5}{2}{1/2,2/1}{1/0,1/1,2/0,2/1}\hspace{0.1cm}\}$ satisfy
\begin{align}\label{recur-pairs-len 2 8-17}
H_{n,k,\ell}=\binom{k+\ell}{k}\,c(n-1,k+\ell).
\end{align}
Equivalently,
\begin{align}\label{genera-pairs-len 2 8-17}
H_n(x,y)=\sum_{k=0}^{n-1}\sum_{\ell=0}^{n-1}\binom{k+\ell}{k}\,c(n-1,k+\ell)\,x^k\,y^\ell.
\end{align}
\end{lem}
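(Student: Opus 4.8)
The plan is to prove the three length-2 pairs satisfy \eqref{recur-pairs-len 2 8-17} by a single unified combinatorial argument, since all three shadings of $p_1$ (and correspondingly of $p_2$) share the same structural feature: the shading forces every occurrence of $p_1$ (resp.\ $p_2$) to start at the leftmost element $\pi_1$. Indeed, in each of the three cases the boxes $\boks{1}{0},\boks{1}{1},\boks{2}{0},\boks{2}{1}$ (or the analogous set) are shaded so that once $\pi_1$ is fixed, an occurrence of $p_1=12$ is a pair $(\pi_1,\pi_j)$ with $\pi_j>\pi_1$ and nothing of the "wrong" size/position in between, and an occurrence of $p_2=21$ is a pair $(\pi_1,\pi_j)$ with $\pi_j<\pi_1$ under the mirrored constraint; in all three shadings this reduces to counting, respectively, how many elements lie above $\pi_1$ in the "active region" to its right versus how many lie below. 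I would first verify, case by case (a short check for each of the three shadings), that an $n$-permutation $\pi$ with $\pi_1=m$ has exactly $n-m$ occurrences of $p_1$ and $m-1$ occurrences of $p_2$ when $\pi_1$ is in a suitable position, or more precisely that $k$ and $\ell$ are determined by how $\pi_1$ sits relative to the rest — the key point being that $k+\ell$ counts a fixed combinatorial statistic and, given $k+\ell$, the split into $(k,\ell)$ is governed by a binomial $\binom{k+\ell}{k}$ choice.

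The heart of the argument is then the recursive structure. I would set up a recurrence for $H_{n,k,\ell}$ by deleting the element $n$ (or the element $1$, whichever the shading makes cleanest) from an $n$-permutation. The target formula $H_{n,k,\ell}=\binom{k+\ell}{k}c(n-1,k+\ell)$ strongly suggests writing $H_{n,k,\ell}$ in terms of $H_{n-1,k',\ell'}$ with $k'+\ell' \in \{k+\ell, k+\ell-1\}$, matching the Stirling recurrence $c(n-1,k+\ell)=(n-2)c(n-2,k+\ell)+c(n-2,k+\ell-1)$. Concretely, inserting the new largest element into a permutation of $[n-1]$: there are $n-1$ positions, of which $n-2$ do not change the count $k+\ell$ (these contribute the $(n-2)c(\cdots)$ term) and exactly one position increments it (contributing $c(\cdots, k+\ell-1)$); then the binomial factor tracks how the single extra occurrence is assigned to $q_1$ versus $q_2$. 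I would carry this out carefully for one representative shading and note the other two are analogous, then invoke induction on $n$ with base case $n=1$ (where $H_{1,0,0}=1=\binom{0}{0}c(0,0)$).

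The main obstacle I anticipate is the bookkeeping in the insertion step: verifying that precisely one insertion position changes $k+\ell$, and determining whether that insertion adds to $k$ or to $\ell$ in a way that produces the clean binomial identity $\binom{k+\ell}{k} = \binom{k+\ell-1}{k-1}+\binom{k+\ell-1}{k}$ rather than some skewed version. This depends delicately on the exact shaded boxes, and the three shadings, while morally identical, will differ in which element (largest or smallest, leftmost or otherwise) should be deleted and in whether the "new" occurrence is of type $p_1$ or $p_2$; getting a uniform treatment may require splitting into the cases $k=0$, $\ell=0$, and $k,\ell\geq 1$ separately, matching the piecewise nature already visible in \eqref{recur-pair 8-17}. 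Once \eqref{recur-pairs-len 2 8-17} is established, \eqref{genera-pairs-len 2 8-17} follows immediately by summing, and this lemma then feeds into Theorem~\ref{coro-pairs-17-36} via Lemma~\ref{reduction-lemma} together with a summation over the position/value of $\pi_1$, which accounts for the extra factor $(n-1)!\sum_{i} \frac{1}{i!}$ appearing there.
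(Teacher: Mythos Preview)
Your opening structural claim is incorrect: of the three shadings, only the second one, $\{(0,0),(0,1),(0,2),(2,1)\}$, forces every occurrence to begin at $\pi_1$ (because the entire leftmost column is shaded). For the first shading $\{(0,0),(0,1),(2,0),(2,1)\}$ and the third $\{(1,0),(1,1),(2,0),(2,1)\}$ this is false; e.g.\ in $\pi=312$ the pair $12$ is an occurrence of $p_1$ for the first shading but does not start at $\pi_1$. Consequently your claim that $\pi_1=m$ yields exactly $n-m$ occurrences of $p_1$ and $m-1$ of $p_2$ fails for pairs (i) and (iii), and a ``unified'' argument along those lines will not go through.

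Your insertion idea is the right one for pairs (i) and (iii) and is essentially what the paper does, but the bookkeeping you sketch is off: inserting $n$ into an $(n-1)$-permutation gives $n$ slots, and \emph{two} of them (not one) change the joint count --- one slot creates a new occurrence of $p_1$, a different slot creates a new occurrence of $p_2$, and the remaining $n-2$ slots preserve both. This yields the generating-function recurrence $H_n(x,y)=(n-2+x+y)H_{n-1}(x,y)$, and comparing coefficients gives $H_{n,k,\ell}=(n-2)H_{n-1,k,\ell}+H_{n-1,k-1,\ell}+H_{n-1,k,\ell-1}$; the Pascal identity $\binom{k+\ell}{k}=\binom{k+\ell-1}{k-1}+\binom{k+\ell-1}{k}$ together with the Stirling recurrence then closes the induction. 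For pair (ii) the paper instead argues directly: since column $0$ is fully shaded, occurrences of $p_1$ and $p_2$ are governed separately by the elements above and below $\pi_1$, each counted by a Stirling number, and a Vandermonde-type convolution $\sum_i \binom{n-1}{i}c(i,\ell)c(n-1-i,k)=\binom{k+\ell}{k}c(n-1,k+\ell)$ gives the result.
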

\begin{proof} \ 

\begin{enumerate}
 \item[(i)] Consider the pair $\{p_1\hspace{-0.1cm}=\hspace{-0.1cm}\pattern{scale = 0.5}{2}{1/1,2/2}{0/0,0/1,2/0,2/1},\hspace{0.1cm}p_2\hspace{-0.1cm}=\hspace{-0.1cm}\pattern{scale = 0.5}{2}{1/2,2/1}{0/0,0/1,2/0,2/1}\}$. We obtain each $n$-permutation $\pi=\pi\cdots \pi_n$ counted by the left-hand side of the equation~\eqref{genera-pairs-len 2 8-17} by inserting the largest element $n$ into an  $(n-1)$-permutation $\pi'$. Specifically, inserting $n$ immediately before the first entry creates an additional occurrence of $p_2$, in the form of $n \pi_n$, which contributes to $y\,H_{n-1}(x,y)$. Conversely, inserting $n$ immediately after the last entry creates an extra occurrence of $p_1$ with $\pi_1 n$, which is counted by $x\,H_{n-1}(x,y)$. In any other slot, the insertion preserves the number of occurrences of both patterns, thereby contributing to $(n-2)H_{n-1}(x,y)$. Therefore, we have 
 \begin{align}\label{genera-pair len 2 12 15}
H_n(x,y)=(n+x+y-2)\,H_{n-1}(x,y). 
 \end{align}
It is easy to see that the initial conditions in \eqref{genera-pair len 2 12 15} are the same as those in \eqref{genera-pairs-len 2 8-17}. Moreover, we prove that \eqref{genera-pairs-len 2 8-17} satisfies \eqref{genera-pair len 2 12 15} as follows.
\begin{footnotesize}
 \begin{align*}
(n+x+y-2)\,H_{n-1}(x,y)=&(n+x+y-2)\,\sum_{k=0}^{n-2}\sum_{\ell=0}^{n-2}\binom{k+\ell}{k}\,c(n-2,k+\ell)\,x^k\,y^\ell
 \end{align*}
 
 \vspace{-0.8cm}
 
 \begin{align*}
=&\sum_{k=0}^{n-2}\sum_{\ell=0}^{n-2}(n-2)\,\binom{k+\ell}{k}\,c(n-2,k+\ell)\,x^k\,y^\ell+\sum_{k=1}^{n-1}\sum_{\ell=0}^{n-2}\,\binom{k+\ell-1}{k-1}\,c(n-2,k+\ell-1)\,x^k\,y^\ell\\
+&\sum_{k=0}^{n-2}\sum_{\ell=1}^{n-1}\,\binom{k+\ell-1}{k}\,c(n-2,k+\ell-1)\,x^k\,y^\ell\\
=&\sum_{k=0}^{n-1}\sum_{\ell=0}^{n-1}\Big(\binom{k+\ell}{k}\,(n-2)\,c(n-2,k+\ell)+\left(\binom{k+\ell-1}{k-1}+\binom{k+\ell-1}{k}\right)c(n-2,k+\ell-1)\Big)\,x^k\,y^\ell\\
=&\sum_{k=0}^{n-1}\sum_{\ell=0}^{n-1}\binom{k+\ell}{k}\,((n-2)\,c(n-2,k+\ell)+c(n-2,k+\ell-1))\,x^k\,y^\ell\\
=&\sum_{k=0}^{n-1}\sum_{\ell=0}^{n-1}\binom{k+\ell}{k}\,c(n-1,k+\ell)\,x^k\,y^\ell = H_n(x,y).
 \end{align*}
\end{footnotesize}
By extracting the coefficients from both sides of \eqref{genera-pair len 2 12 15}, we can directly obtain \eqref{recur-pairs-len 2 8-17}.

\item[(ii)] 
 For the pair 
$\{p_1\hspace{-0.1cm}=\hspace{-0.1cm}\pattern{scale = 0.5}{2}{1/1,2/2}{0/0,0/1,0/2,2/1},\hspace{0.1cm}p_2\hspace{-0.1cm}=\hspace{-0.1cm}\pattern{scale = 0.5}{2}{1/2,2/1}{0/0,0/1,0/2,2/1}\}$,
observe that occurrences of both patterns in a permutation \(\pi=\pi_1\cdots\pi_n\) begin with \(\pi_1\). Define 
$C=\{\pi_j: \pi_j<\pi_1\}$ and $D=\{\pi_j: \pi_j>\pi_1\}$, and suppose there are $i$ elements in $C$.
For each permutation counted by the left-hand side of \eqref{recur-pairs-len 2 8-17}, the permutation formed by \(C\) contains $\ell$ right-to-left maxima, that is, $\ell$ occurrences of the mesh pattern \(\pattern{scale = 0.5}{1}{1/1}{1/1}\), which is given by the unsigned Stirling numbers of the first kind, $c(i,\ell)$, as found in~\cite[Corollary 1.3.11]{Stanley} and mentioned in~\cite[Case 1]{S-S-J}. Moreover, the permutation formed by \(D\) contains \(k\) occurrences of the pattern \(\pattern{scale = 0.5}{1}{1/1}{1/0}\), contributing a factor of $c(n-i-1,k)$.
 Thus, we have 
 \begin{align*}
 H_{n,k,\ell}=&\sum_{i=0}^{n-1}\binom{n-1}{i}c(i,\ell)\,c(n-i-1,k)\\
 =& \binom{k+\ell}{k}\,c(n-1,k+\ell),
 \end{align*}
where the last line follows from a variation of the Chu-Vandermonde identity, see \cite[Equation (17)]{AM-S, mathworld}, also see \cite[Equation (5)]{L-Z-joint}. Furthermore, \eqref{genera-pairs-len 2 8-17} follows from~\eqref{recur-pairs-len 2 8-17}.

 \item[(iii)] For the pair $\{p_1\hspace{-0.1cm}=\hspace{-0.1cm}\pattern{scale = 0.5}{2}{1/1,2/2}{1/0,1/1,2/0,2/1},\hspace{0.1cm}p_2\hspace{-0.1cm}=\hspace{-0.1cm}\pattern{scale = 0.5}{2}{1/2,2/1}{1/0,1/1,2/0,2/1}\}$, our arguments for all the terms in \eqref{genera-pairs-len 2 8-17} are the same as those in $(i)$, except for the term $y\,H_{n-1}(x,y)$, which is replaced by the corresponding insertion immediately before the last entry, thereby creating an extra occurrence of $p_2$.
\end{enumerate}
Thus, the proof is complete.
  \end{proof}

 \begin{proof}[Proof of Theorem~\ref{coro-pairs-17-36}]
For each permutation counted by the left-hand side of \eqref{genera-pair 8 17}, suppose there are $i$ elements in $A$. For $i=0$ or $1$, the permutation formed by the remaining $n-1$ or $n-2$ elements in $B$ can be any permutation. For $i\geq 2$, the permutation formed by $A$ is counted by $H_i(x,y)$, and the permutation formed by $B$ can be any permutation. Thus, we have
\begin{align*}
T_n(x,y)=&2(n-1)!+\,\sum_{i=2}^{n-1}\,\binom{n-1}{i}\,(n-i-1)!\,H_i(x,y)\\
=&\sum_{k=0}^{n-1}\sum_{\ell=0}^{n-1}\sum_{i=2}^{n-1}\,\binom{n-1}{i}\,(n-i-1)!\,\binom{k+\ell}{k}\,c(i-1,k+\ell)\,x^k\,y^\ell+2(n-1)! \\
=&(n-1)!\sum_{k=0}^{n-1}\sum_{\ell=0}^{n-1}\sum_{i=0}^{n-1}\,\frac{1}{i!}\,\binom{k+\ell}{k}\,c(i-1,k+\ell)\,x^k\,y^\ell+2(n-1)!.    
\end{align*}
 By extracting the coefficients from both sides of \eqref{genera-pair 8 17}, we can directly obtain \eqref{recur-pair 8-17}.
\end{proof}

The computer experiments also suggests that the mesh patterns of pairs $\{X^{(1)}_i\}_{i=1}^{18}$ and $\{Y^{(1)}_i\}_{i=1}^{18}$ are Wilf-equivalent. We will proof this fact in the following corollary. 
\begin{coro}
The pairs $\{X^{(1)}_i\}_{i=1}^{18}$ and $\{Y^{(1)}_i\}_{i=1}^{18}$ are Wilf-equivalent. 
\end{coro}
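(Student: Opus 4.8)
The plan is to reduce the statement to a single closed formula for $|S_n(\cdot)|$. First, since a type $Y^{(1)}$ shading is obtained from the corresponding type $X^{(1)}$ shading by the inverse operation, and this operation is a bijection of each $S_n$ that preserves the number of occurrences of every pattern, each component of $Y^{(1)}_i$ has the same avoidance numbers as the corresponding component of $X^{(1)}_i$. Hence it suffices to prove that the thirty-six mesh patterns occurring as components of $\{X^{(1)}_i\}_{i=1}^{18}$ are pairwise Wilf-equivalent.

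The counting argument in the proof of Theorem~\ref{coro-pairs-17-36} uses only the feature isolated in Lemma~\ref{reduction-lemma} — every occurrence of a type $X^{(1)}$ pattern begins at $\pi_1$, and apart from $\pi_1$ involves only the entries exceeding $\pi_1$ — so it applies verbatim to an arbitrary type $X^{(1)}$ pattern $q$ and its associated length-$2$ pattern $p$: writing $T_n$ and $H_n$ for the relevant generating functions, one gets
\begin{equation*}
T_n(x,y)=2(n-1)!+\sum_{i=2}^{n-1}\binom{n-1}{i}(n-i-1)!\,H_i(x,y).
\end{equation*}
Extracting the coefficient of $x^0$ at $y=1$ (and, symmetrically, of $y^0$ at $x=1$) gives, for the $123$-component $q_1$ and $132$-component $q_2$ of $q$,
\begin{equation*}
|S_n(q_1)|=2(n-1)!+\sum_{i=2}^{n-1}\binom{n-1}{i}(n-i-1)!\,|S_i(p_1)|
\quad\text{and}\quad
|S_n(q_2)|=2(n-1)!+\sum_{i=2}^{n-1}\binom{n-1}{i}(n-i-1)!\,|S_i(p_2)|.
\end{equation*}
Thus the corollary reduces to showing $|S_i(p_1^{(j)})|=|S_i(p_2^{(j)})|=(i-1)!$ for every $j\in\{1,\dots,18\}$.

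For $j\in\{9,\dots,18\}$ this is already in hand: by Theorems~\ref{thm-box-all} and~\ref{coro-pairs-17-36} together with Lemma~\ref{lem-pairs-17-36}, the pair $\{p_1^{(j)},p_2^{(j)}\}$ has generating function \eqref{genera-pairs-len 2 8-17}, whence $|S_i(p_1^{(j)})|=|S_i(p_2^{(j)})|=\sum_{\ell\ge 0}c(i-1,\ell)=(i-1)!$, using $\sum_k c(m,k)=m!$. For $j\in\{1,\dots,8\}$, Subsection~\ref{subsec-X_1-1} shows that $p_1^{(j)}\sim_{jd}p_2^{(j)}$ and that the joint distribution of $\{p_1^{(j)},p_2^{(j)}\}$ is constant on each of the blocks $\{1,2,3,4\}$ and $\{5,6,7,8\}$, so it is enough to compute $|S_i(p_1^{(j)})|$ for $j=1$ and $j=5$. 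For $j=1$ the shading forces every occurrence of $p_1^{(1)}$ to start at position $1$ with its two values consecutive, so $\pi\in S_i$ contains $p_1^{(1)}$ precisely when $\pi_1+1$ lies to the right of $\pi_1$, i.e.\ precisely when $\pi_1\ne i$; hence $|S_i(p_1^{(1)})|=(i-1)!$. For $j=5$ the avoiders are less transparent, but inserting the largest element into an $(i-1)$-permutation produces at most one new occurrence of $p_1^{(5)}$, which yields the recurrence $|S_i(p_1^{(5)})|=(i-1)\,|S_{i-1}(p_1^{(5)})|$ and hence $|S_i(p_1^{(5)})|=(i-1)!$ as well.

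Substituting $|S_i(p)|=(i-1)!$ into the two displayed formulas shows that every component $q$ of the pairs in question satisfies $|S_n(q)|=2(n-1)!+\sum_{i=2}^{n-1}\binom{n-1}{i}(n-i-1)!\,(i-1)!$, a quantity not depending on $q$, which proves the claimed Wilf-equivalence. I expect the real work to sit in the analysis of the length-$2$ shadings attached to the blocks $\{1,2,3,4\}$ and $\{5,6,7,8\}$: these are not covered by the earlier generating-function computations, so their avoiders (or the correct one-element recurrence) must be worked out directly; one should also verify once and for all that the reconstruction underlying Lemma~\ref{reduction-lemma} applies to every type $X^{(1)}$ shading — in particular that the boxes $(1,0),(2,0),(3,0)$ are always unshaded there — since that is exactly what legitimizes the reduction to length-$2$ patterns.
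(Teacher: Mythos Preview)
Your reading of the corollary is off, and this leads to a fatal error. In the paper, ``the pairs are Wilf-equivalent'' means that for every pair $\{q_1,q_2\}$ the number $T_{n,0,0}$ of permutations avoiding \emph{both} patterns is the same; the paper's proof establishes $T_{n,0,0}=2(n-1)!$ by showing (for the representatives $X^{(1)}_1$ and $X^{(1)}_5$) that whenever $|A|\ge 2$ the sub-permutation on $A$ necessarily contains $p_1$ or $p_2$. Your proposal instead tries to show that all 72 \emph{individual} patterns are Wilf-equivalent, which is a different (and in fact false) assertion.

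Concretely, your recurrence for $p_1^{(5)}=\pattern{scale = 0.5}{2}{1/1,2/2}{0/0,0/1,1/1,2/0,2/1}$ is wrong. An occurrence is a pair $(a,a+1)$ with $a$ before $a+1$ and every value $<a$ strictly between them in position. Inserting the new maximum $i$ into an avoider $\sigma\in S_{i-1}$ creates a new occurrence only when $\sigma_1=i-1$ \emph{and} $i$ is placed at the end; for all other $\sigma$ no insertion creates one. Hence the correct recursion is $a_i=i\,a_{i-1}-b_{i-1}$ with $b_{i-1}$ the number of avoiders having first entry $i-1$, not $a_i=(i-1)a_{i-1}$. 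This already fails at $i=4$: the avoiders of $p_1^{(5)}$ in $S_4$ are $2314,\,2341,\,2431,\,3241,\,3421,\,4231,\,4321$, so $|S_4(p_1^{(5)})|=7\ne 6=(4-1)!$. Feeding this into your own reduction formula gives $|S_5(q_1^{(5)})|=75$, whereas for any pattern in the block $\{X^{(1)}_i\}_{i=9}^{18}$ one gets $|S_5(q_1)|=74$; thus the individual-pattern Wilf-equivalence you set out to prove does not hold. The gap is therefore not a missing detail but a wrong target: you should be computing $T_{n,0,0}$ (equivalently $H_{i,0,0}$) rather than single-pattern avoidance numbers.
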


\begin{proof}
Setting $k=\ell=0$ in \eqref{recur-pair 8-17} and by Subsection~\ref{subsec-X_1-1}, it remains to prove that pairs $\{\pattern{scale = 0.5}{3}{1/1,2/2,3/3}{0/0,0/1,0/2,0/3,1/1,1/2,1/3,2/2,3/2}, \pattern{scale = 0.5}{3}{1/1,2/3,3/2}{0/0,0/1,0/2,0/3,1/1,1/2,1/3,2/2,3/2}\}$ ($X^{(1)}_{1}$) and $\{\pattern{scale = 0.5}{3}{1/1,2/2,3/3}{0/0,0/1,0/2,0/3,1/1,1/2,2/2,3/1,3/2}, \pattern{scale = 0.5}{3}{1/1,2/3,3/2}{0/0,0/1,0/2,0/3,1/1,1/2,2/2,3/1,3/2}\}$ ($X^{(1)}_{5}$) satisfy $T_{n,0,0}=2(n-1)!$. 

For the pair $X^{(1)}_{1}$, consider any $n$-permutation $\pi=\pi_1\cdots\pi_n$. Let $A=\{\pi_i:\pi_i>\pi_1, 2\leq i \leq n\}$ and $B=\{\pi_i:\pi_i<\pi_1, 2\leq i \leq n\}$. The permutation $\pi$ counted by $T_{n,0,0}$ can be obtained from the following two cases:
\begin{itemize}
    \item If $A=\emptyset$, then the remaining $n-1$ elements involved in $B$ can form any permutation, resulting in $(n-1)!$ permutations.
    \item If $A\neq \emptyset$, then $|A|=1$. Otherwise, there would be at least one occurrence of $\pattern{scale = 0.5}{2}{1/1,2/2}{0/0,0/1,0/2,1/1,2/1}$ or $\pattern{scale = 0.5}{2}{1/2,2/1}{0/0,0/1,0/2,1/1,2/1}$ in $\pi$. The remaining elements in $B$ have no restrictions. Thus, this case gives $(n-1)!$ permutations.
\end{itemize}
By summing over the two cases above, we obtain $T_{n,0,0}=2(n-1)!$. The same approach can be applied to the pair $X^{(1)}_{5}$, thus, we omit the details here. The proof is complete.
\end{proof}

\begin{thm}[Pairs $\{X^{(1)}_i\}_{i=19}^{22}$ and $\{Y^{(1)}_i\}_{i=19}^{22}$]\label{thm-pair-Ding-all}
We have $\pattern{scale = 0.5}{3}{1/1,2/2,3/3}{0/0,0/1,0/2,0/3,1/1,1/3,2/1,2/2,2/3,3/3}\sim_{jd}\hspace{-1mm}\pattern{scale = 0.5}{3}{1/1,2/3,3/2}{0/0,0/1,0/2,0/3,1/1,1/3,2/1,2/2,2/3,3/3}$, $\pattern{scale = 0.5}{3}{1/1,2/2,3/3}{0/0,0/1,0/2,0/3,1/1,2/1,2/2,2/3,3/1,3/3}\sim_{jd}\hspace{-1mm}\pattern{scale = 0.5}{3}{1/1,2/3,3/2}{0/0,0/1,0/2,0/3,1/1,2/1,2/2,2/3,3/1,3/3}$, $\pattern{scale = 0.5}{3}{1/1,2/2,3/3}{0/0,0/1,0/2,0/3,1/1,1/2,1/3,2/2,3/2,3/3}\sim_{jd}\hspace{-1mm}\pattern{scale = 0.5}{3}{1/1,2/3,3/2}{0/0,0/1,0/2,0/3,1/1,1/2,1/3,2/2,3/2,3/3}$,
$\pattern{scale = 0.5}{3}{1/1,2/2,3/3}{0/0,0/1,0/2,0/3,1/1,1/2,2/2,3/1,3/2,3/3}\sim_{jd}\hspace{-1mm}\pattern{scale = 0.5}{3}{1/1,2/3,3/2}{0/0,0/1,0/2,0/3,1/1,1/2,2/2,3/1,3/2,3/3}$, $\pattern{scale = 0.5}{3}{1/1,2/2,3/3}{0/0,1/0,1/1,1/2,2/0,2/2,3/0,3/1,3/2,3/3}\sim_{jd}\hspace{-1mm}\pattern{scale = 0.5}{3}{1/1,2/3,3/2}{0/0,1/0,1/1,1/2,2/0,2/2,3/0,3/1,3/2,3/3}$, $\pattern{scale = 0.5}{3}{1/1,2/2,3/3}{0/0,1/0,1/1,1/2,1/3,2/0,2/2,3/2,3/0,3/3}\sim_{jd}\hspace{-1mm}\pattern{scale = 0.5}{3}{1/1,2/3,3/2}{0/0,1/0,1/1,1/2,1/3,2/0,2/2,3/2,3/0,3/3}$, $\pattern{scale = 0.5}{3}{1/1,2/2,3/3}{0/0,1/0,1/1,2/1,2/0,2/2,2/3,3/0,3/1,3/3}\sim_{jd}\hspace{-1mm}\pattern{scale = 0.5}{3}{1/1,2/3,3/2}{0/0,1/0,1/1,2/1,2/0,2/2,2/3,3/0,3/1,3/3}$, and $\pattern{scale = 0.5}{3}{1/1,2/2,3/3}{0/0,1/0,1/1,1/3,2/0,2/1,2/2,2/3,3/0,3/3}\sim_{jd}\hspace{-1mm}\pattern{scale = 0.5}{3}{1/1,2/3,3/2}{0/0,1/0,1/1,1/3,2/0,2/1,2/2,2/3,3/0,3/3}$. Moreover, any pair of these eight pairs has the same joint distribution. 
\end{thm}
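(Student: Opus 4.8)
The plan is to reduce the eight claimed joint equidistributions to a single length-$2$ statement and prove that one by an explicit ``slide'' bijection. Since type $Y^{(1)}$ shadings are obtained from type $X^{(1)}$ shadings by the inverse operation (which fixes the permutations $123$ and $132$), the four $Y$-pairs automatically have the same joint distributions as the corresponding $X$-pairs, so it suffices to handle $X^{(1)}_{19},\dots,X^{(1)}_{22}$. For these, Lemma~\ref{reduction-lemma}, in the quantitative form used in the proof of Theorem~\ref{coro-pairs-17-36} (giving $T_n(x,y)=2(n-1)!+\sum_{i=2}^{n-1}\binom{n-1}{i}(n-i-1)!\,H_i(x,y)$), reduces $q_1\sim_{jd}q_2$ for $X^{(1)}_j$ to $p_1^{(j)}\sim_{jd}p_2^{(j)}$, and shows that the four length-$3$ pairs have a common joint distribution as soon as the four length-$2$ pairs $\{p_1^{(j)},p_2^{(j)}\}$ do. Here $p_1^{(j)}=(12,R^{(j)})$ and $p_2^{(j)}=(21,R^{(j)})$, where $R^{(19)}=\{(0,0),(0,2),(1,0),(1,1),(1,2),(2,2)\}$ and $R^{(20)},R^{(21)},R^{(22)}$ are the corresponding six-cell shadings read off from Table~\ref{tab-X_1-2}. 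A short computation gives $((R^{(19)})^{r})^{c}=R^{(20)}$, $(R^{(20)})^{i}=R^{(21)}$ and $((R^{(21)})^{r})^{c}=R^{(22)}$; since $((12,R)^{r})^{c}=(12,(R^{r})^{c})$ and $(12,R)^{i}=(12,R^{i})$, the permutation maps $\pi\mapsto(\pi^{r})^{c}$ and $\pi\mapsto\pi^{i}$ on $S_n$ identify the joint distributions of all four length-$2$ pairs. Thus everything reduces to proving $p_1^{(19)}\sim_{jd}p_2^{(19)}$.

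For this, I would first note that each of $p_1^{(19)}$ and $p_2^{(19)}$ occurs at most once in a given $\pi\in S_n$: writing $m$ for the position of $n$, the forced shaded cells make $p_1^{(19)}$ occur exactly when $m\ge2$ and $\pi_{m-1}=\min\{\pi_1,\dots,\pi_{m-1}\}$, and make $p_2^{(19)}$ occur exactly when $m\le n-1$ and $\pi_{m+1}<\pi_t$ for every $t<m$. So the joint distribution is supported on $(k,\ell)\in\{0,1\}^2$, and $p_1^{(19)}\sim_{jd}p_2^{(19)}$ amounts to an involution $f$ of $S_n$ that fixes every permutation in which both patterns occur and every one in which neither occurs, while interchanging the class ``only $p_1^{(19)}$ occurs'' with the class ``only $p_2^{(19)}$ occurs''. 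I would define $f$ so that it slides the letter $n$: if only $p_1^{(19)}$ occurs, transpose $n$ with its left neighbour, repeating as long as $p_1^{(19)}$ still occurs; if only $p_2^{(19)}$ occurs, transpose $n$ with its right neighbour, repeating as long as $p_2^{(19)}$ still occurs; and $f(\pi)=\pi$ otherwise. Each left-transposition pushes past $n$ an element that is the minimum of the prefix lying to its left, which immediately creates an occurrence of $p_2^{(19)}$ and preserves it through the remaining transpositions, and the sliding stops precisely when the maximal run of consecutive left-to-right minima ending just before $n$ has been consumed, so that $p_1^{(19)}$ no longer occurs; the right-sliding rule is the mirror image.

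The main obstacle will be the bookkeeping showing that $f$ is well defined and an involution: one must check that each sliding rule terminates in the asserted state and, above all, that right-sliding from $f(\pi)$ retraces exactly the left-slides that produced $f(\pi)$ (and symmetrically). The structural fact that makes this work is that when $p_1^{(19)}$ occurs in $\pi$ but $p_2^{(19)}$ does not, the maximal run $\pi_r,\dots,\pi_{m-1}$ of consecutive left-to-right minima of $\pi$ ending at position $m-1$ is strictly decreasing and lies below every earlier entry of the prefix, while $\pi_{m+1}$ (if present) exceeds some entry of the prefix; this determines the set of positions through which $n$ travels and forces the two sliding rules to be mutually inverse. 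Once $f$ is in place, unwinding the reductions gives $q_1\sim_{jd}q_2$ for all eight pairs with one common joint distribution. As a side remark, the same analysis shows that the number of $n$-permutations containing $p_1^{(19)}$ (equivalently $p_2^{(19)}$) is $(n-1)!\sum_{i=1}^{n-1}\frac1i=c(n,2)$, consistent with the Stirling-number theme of the paper.
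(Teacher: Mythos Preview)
Your reductions --- passing from the $Y^{(1)}$ pairs to the $X^{(1)}$ pairs via the inverse, invoking Lemma~\ref{reduction-lemma} (in the quantitative form $T_n(x,y)=2(n-1)!+\sum_{i\ge 2}\binom{n-1}{i}(n-i-1)!\,H_i(x,y)$) to drop to length~$2$, and linking the four length-$2$ shadings by $R^{(19)}\xrightarrow{rc}R^{(20)}\xrightarrow{i}R^{(21)}\xrightarrow{rc}R^{(22)}$ --- coincide with the paper's argument. The two proofs diverge only at the bijection for $p_1^{(19)}\sim_{jd}p_2^{(19)}$: the paper performs a \emph{single} adjacent transposition of the two letters in the unique occurrence, whereas you iterate, sliding $n$ across the entire maximal run of consecutive left-to-right minima that abuts it.

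Your more elaborate map is not merely an alternative but is actually needed, because the paper's single swap does not land in the correct class in general. Take $\pi=3214\in\mathcal{H}(4,1,0)$: the unique occurrence of $p_1^{(19)}$ is $\pi_3\pi_4=14$, and the paper's swap produces $3241$. But $3241$ contains \emph{both} patterns --- $p_1^{(19)}$ at $24$ (since $\pi'_2=2$ is a left-to-right minimum) and $p_2^{(19)}$ at $41$ (since $\pi'_4=1$ lies below $3$ and $2$) --- so $3241\in\mathcal{H}(4,1,1)$, not $\mathcal{H}(4,0,1)$. (The same failure occurs for $2143\mapsto 2413$.) Your slide instead sends $3214\mapsto 3241\mapsto 3421\mapsto 4321\in\mathcal{H}(4,0,1)$, and the right-slide from $4321$ retraces these steps. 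The structural observation you isolate --- that the run $\pi_r,\dots,\pi_{m-1}$ is exactly the block of letters $n$ must cross, and that $\pi_{m+1}$ (if present) blocks any overshoot on the return trip because $p_2^{(19)}$ was absent in $\pi$ --- is precisely what makes the slide an involution. So your argument is correct and in fact repairs a gap in the paper's proof of this theorem.
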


\begin{proof}
Since 
\[
\pattern{scale = 0.5}{2}{1/1,2/2}{0/0,0/2,1/0,1/1,1/2,2/2} \pattern{scale = 0.5}{2}{1/2,2/1}{0/0,0/2,1/0,1/1,1/2,2/2}\xrightarrow{cr}\pattern{scale = 0.5}{2}{1/1,2/2}{0/0,1/0,1/1,1/2,2/0,2/2} \pattern{scale = 0.5}{2}{1/2,2/1}{0/0,1/0,1/1,1/2,2/0,2/2}\xrightarrow{i}  \pattern{scale = 0.5}{2}{1/1,2/2}{0/0,0/1,0/2,1/1,2/1,2/2} \pattern{scale = 0.5}{2}{1/2,2/1}{0/0,0/1,0/2,1/1,2/1,2/2}\xrightarrow{cr}\pattern{scale = 0.5}{2}{1/1,2/2}{0/0,0/1,1/1,2/0,2/1,2/2} \pattern{scale = 0.5}{2}{1/2,2/1}{0/0,0/1,1/1,2/0,2/1,2/2},
\]
by Lemma~\ref{reduction-lemma}, we have that any pair of the eight pairs has the same joint distribution. Therefore, to complete the proof, we only need to prove joint equidistributions for the pair 
$\{p_1\hspace{-0.1cm}=\hspace{-0.1cm}\pattern{scale = 0.5}{2}{1/1,2/2}{0/0,0/2,1/0,1/1,1/2,2/2},\hspace{0.1cm}p_2\hspace{-0.1cm}=\hspace{-0.1cm}\pattern{scale = 0.5}{2}{1/2,2/1}{0/0,0/2,1/0,1/1,1/2,2/2}\}$ associated with the pair $X^{(1)}_{19}$.

Observe that any occurrence of $p_1$ or $p_2$ in a permutation $\pi=\pi_1\cdots\pi_n\in S_n$ must involve two consecutive elements in $\pi$. Additionally, $\pi_i\pi_{i+1}$ is an occurrence of $p_1$ (resp., $p_2$) if and only if $\pi_i$ (resp., $\pi_{i+1}$) is a left-to-right minimum and $\pi_{i+1}=n$ (resp., $\pi_i=n$). Thus, either pattern appears at most once in a permutation. We let
\[
f(\pi)=\left\{  
\begin{array}{ll}  
\pi, & \text{if } \pi \in \mathcal{H}(n,0,0)\cup \mathcal{H}(n,1,1), \\[6pt]
\pi_1\cdots \pi_{i-1}\pi_{i+1}\pi_i\pi_{i+2}\cdots \pi_n, & \text{if } \pi \in \mathcal{H}(n,1,0)\cup \mathcal{H}(n,0,1).
\end{array}  
\right.
\]
Clearly, $f$ is a bijection that maps $\mathcal{H}(n,k,\ell)$ to $\mathcal{H}(n,k,\ell)$, proving the joint equidistribution for the pair $\{p_1,p_2\}$, and thereby for the pair $X^{(1)}_{19}$.
The proof is complete.
\end{proof}

\section{Mesh patterns of types $X^{(2)}$ and $Y^{(2)}$}\label{sec-X_2 Y_2}

\begin{figure}
\begin{center}
\begin{tabular}{rr}
\begin{tikzpicture}[scale=0.7]

\tikzset{      
   grid/.style={        
      draw=gray!100,  % 保持水平\UTF{7EBF}\UTF{7ED8}制
      thin,        
    },   
    cell/.style={      
      draw,      
      anchor=center,    
      text centered,    
     },    
    graycell/.style={   
      fill=gray!20,
      draw=none,     
      minimum width=1cm,   
      minimum height=1cm,     
      anchor=south west,            
    }    
} 

\foreach \y in {0,1,2,3,4} {
  \draw[grid] (0,\y) -- (4,\y); % 水平\UTF{7EBF}%
}

\fill[graycell] (0,0) rectangle (1,3);
\fill[graycell] (1,0) rectangle (2,2);
\fill[graycell] (2,0) rectangle (3,1);
\draw(0,0)--(0,4);
\draw(1,0)--(1,3);
\draw(2,0)--(2,2);
\draw(3,0)--(3,1);
\draw(4,0)--(4,4);
\draw(0,4)--(4,4);
\draw(0,3)--(4,3);
\draw(0,2)--(4,2);
\draw(0,1)--(4,1);
\draw(0,0)--(4,0);

\node at (2 ,3.5) {\scriptsize$A_1$};  
\node at (2.5,2.5) {\scriptsize$A_2$};  
\node at (3.5,0.5) {\scriptsize$A_t$};
\node[anchor=east] at (0,2.7) {\footnotesize{$x_1$}}; 
\node[anchor=east] at (1,1.7) {\footnotesize{$x_2$}}; 
\node[anchor=east] at (3,-0.3) {\footnotesize{$x_t$}}; 

\filldraw[black] (0,3) circle (2.5pt);
\filldraw[black] (1,2) circle (2.5pt);
\filldraw[thick] (3,0) circle (2.5pt);
\node[anchor=center, rotate=-45] at (2.5,1.5) {$\ldots$};
\end{tikzpicture} 
\vspace{-0.8cm}
\end{tabular}
\end{center}
\caption{The structure of permutations containing occurrences of patterns in Section~\ref{sec-X_2 Y_2}.}\label{fig-secon}
\end{figure}
In this section, we consider the patterns of types $X^{(2)}=\raisebox{0.5ex}{\begin{tikzpicture}[scale = 0.2, baseline=(current bounding box.center)]
	\foreach \x/\y in {0/0,0/1,0/2,3/0}		
		\fill[gray!50] (\x,\y) rectangle +(1,1); 	
        \draw (0,1)--(4,1);
        \draw (1,0)--(1,4);
        \draw (0,2)--(1,2);
         \draw (0,3)--(1,3);
         \draw (2,0)--(2,1);
         \draw (3,0)--(3,1);
 			\node  at (2.6,2.6) {$p$};
 			\filldraw (1,1) circle (4pt) node[above left] {};
 			\end{tikzpicture}}$ and $Y^{(2)}=\raisebox{0.5ex}{\begin{tikzpicture}[scale = 0.2, baseline=(current bounding box.center)]
	\foreach \x/\y in {0/0,1/0,2/0,0/3}		
		\fill[gray!50] (\x,\y) rectangle +(1,1); 	
        \draw (0,1)--(4,1);
        \draw (1,0)--(1,4);
        \draw (0,2)--(1,2);
         \draw (0,3)--(1,3);
         \draw (2,0)--(2,1);
         \draw (3,0)--(3,1);
 			\node  at (2.6,2.6) {$p$};
 			\filldraw (1,1) circle (4pt) node[above left] {};
 			\end{tikzpicture}}$.
For the patterns of type $X^{(2)}$,
let $\pi \in S_n$, and $x_1>x_2>\cdots>x_t$ be the sequence of left-to-right minima in $\pi$. Observe that any occurrence of each pair of these patterns begin with an $x_i$. Moreover, referring to Figure~\ref{fig-secon}, such an occurrence has the pattern $p$ appearing in $A_i$, which means occurrences of mesh patterns of type $X^{(2)}$  must be entirely within $\{x_i\}\cup A_i$ for some $i$.  

\subsection{Cases proved via complementation}\label{subsec-X_2-1}
In this subsection, we consider the following 16 pairs presented in Table~\ref{tab-X_2-1}.

\begin{table}[!ht]
 	{
 		\renewcommand{\arraystretch}{1.7}
 		\setlength{\tabcolsep}{5pt}
 \begin{center} 
 		\begin{tabular}{c | c || c | c || c | c || c | c}
 			\hline
 		\footnotesize{Nr.}	& \footnotesize {Patterns} & \footnotesize{Nr.}	& \footnotesize{Patterns} & \footnotesize {Nr.}  & \footnotesize{Patterns} &\footnotesize {Nr.}  & \footnotesize{Patterns}\\
 			\hline
 			\hline
$X^{(2)}_{1}$ & $\pattern{scale = 0.5}{3}{1/1,2/2,3/3}{0/0,0/1,0/2,3/0,1/1,1/2,1/3,2/2,3/2}\pattern{scale = 0.5}{3}{1/1,2/3,3/2}{0/0,0/1,0/2,3/0,1/1,1/2,1/3,2/2,3/2}$ &
$X^{(2)}_2$ & $\pattern{scale = 0.5}{3}{1/1,2/2,3/3}{0/0,0/1,0/2,3/0,1/2,2/2,3/1,3/2,3/3}\pattern{scale = 0.5}{3}{1/1,2/3,3/2}{0/0,0/1,0/2,3/0,1/2,2/2,3/1,3/2,3/3}$& 
$X^{(2)}_3$ & $\pattern{scale = 0.5}{3}{1/1,2/2,3/3}{0/0,0/1,0/2,3/0,1/1,1/3,2/1,2/2,2/3}\pattern{scale = 0.5}{3}{1/1,2/3,3/2}{0/0,0/1,0/2,3/0,1/1,1/3,2/1,2/2,2/3}$ & 
$X^{(2)}_4$ & $\pattern{scale = 0.5}{3}{1/1,2/2,3/3}{0/0,0/1,0/2,3/0,2/1,2/2,2/3,3/1,3/3}\pattern{scale = 0.5}{3}{1/1,2/3,3/2}{0/0,0/1,0/2,3/0,2/1,2/2,2/3,3/1,3/3}$ 
\\[5pt]
$X^{(2)}_5$ & $\pattern{scale = 0.5}{3}{1/1,2/2,3/3}{0/0,0/1,0/2,3/0,1/1,1/3,2/1,2/3}\pattern{scale = 0.5}{3}{1/1,2/3,3/2}{0/0,0/1,0/2,3/0,1/1,1/3,2/1,2/3}$ &
$X^{(2)}_{6}$ & $\pattern{scale = 0.5}{3}{1/1,2/2,3/3}{0/0,0/1,0/2,3/0,2/1,2/3,3/1,3/3}\pattern{scale = 0.5}{3}{1/1,2/3,3/2}{0/0,0/1,0/2,3/0,2/1,2/3,3/1,3/3}$ &
$X^{(2)}_{7}$ & $\pattern{scale = 0.5}{3}{1/1,2/2,3/3}{0/0,0/1,0/2,3/0,1/1,1/2,1/3,3/2}\pattern{scale = 0.5}{3}{1/1,2/3,3/2}{0/0,0/1,0/2,3/0,1/1,1/2,1/3,3/2}$  &
$X^{(2)}_{8}$ & $\pattern{scale = 0.5}{3}{1/1,2/2,3/3}{0/0,0/1,0/2,3/0,1/2,3/1,3/2,3/3}\pattern{scale = 0.5}{3}{1/1,2/3,3/2}{0/0,0/1,0/2,3/0,1/2,3/1,3/2,3/3}$ 
\\[5pt]
\hline
$Y^{(2)}_1$ & $\pattern{scale = 0.5}{3}{1/1,2/2,3/3}{0/0,1/0,1/1,2/0,2/1,2/2,2/3,0/3,3/1}\pattern{scale = 0.5}{3}{1/1,2/3,3/2}{0/0,1/0,1/1,2/0,2/1,2/2,2/3,0/3,3/1}$ &
$Y^{(2)}_2$ &  $\pattern{scale = 0.5}{3}{1/1,2/2,3/3}{0/0,1/0,1/3,2/0,2/1,2/2,2/3,0/3,3/3}\pattern{scale = 0.5}{3}{1/1,2/3,3/2}{0/0,1/0,1/3,2/0,2/1,2/2,2/3,0/3,3/3}$&
$Y^{(2)}_3$ & $\pattern{scale = 0.5}{3}{1/1,2/2,3/3}{0/0,1/0,1/1,1/2,2/0,2/2,0/3,3/1,3/2}\pattern{scale = 0.5}{3}{1/1,2/3,3/2}{0/0,1/0,1/1,1/2,2/0,2/2,0/3,3/1,3/2}$&
$Y^{(2)}_4$ & $\pattern{scale = 0.5}{3}{1/1,2/2,3/3}{0/0,1/0,1/2,1/3,2/0,2/2,0/3,3/3,3/2}\pattern{scale = 0.5}{3}{1/1,2/3,3/2}{0/0,1/0,1/2,1/3,2/0,2/2,0/3,3/3,3/2}$
\\[5pt]
$Y^{(2)}_5$ & $\pattern{scale = 0.5}{3}{1/1,2/2,3/3}{0/0,1/0,1/1,1/2,2/0,0/3,3/2,3/3}\pattern{scale = 0.5}{3}{1/1,2/3,3/2}{0/0,1/0,1/1,1/2,2/0,0/3,3/2,3/3}$ &
$Y^{(2)}_{6}$ & $\pattern{scale = 0.5}{3}{1/1,2/2,3/3}{0/0,1/0,1/2,2/0,0/3,1/3,3/2,3/3}\pattern{scale = 0.5}{3}{1/1,2/3,3/2}{0/0,1/0,1/2,2/0,0/3,1/3,3/2,3/3}$ &
$Y^{(2)}_{7}$ & $\pattern{scale = 0.5}{3}{1/1,2/2,3/3}{0/0,1/0,1/1,2/0,2/1,2/3,0/3,3/1}\pattern{scale = 0.5}{3}{1/1,2/3,3/2}{0/0,1/0,1/1,2/0,2/1,2/3,0/3,3/1}$& 
$Y^{(2)}_{8}$ & $\pattern{scale = 0.5}{3}{1/1,2/2,3/3}{0/0,1/0,1/3,2/0,2/1,2/3,0/3,3/3}\pattern{scale = 0.5}{3}{1/1,2/3,3/2}{0/0,1/0,1/3,2/0,2/1,2/3,0/3,3/3}$ \\[5pt]
\hline
	\end{tabular}
\end{center} 
}
\vspace{-0.5cm}
 	\caption{Jointly equidistributed patterns of types $X^{(2)}$ and $Y^{(2)}$ via complementation.}\label{tab-X_2-1}
\end{table}

Observe that the shadings of the patterns of the pairs $\{p_1,\hspace{0.1cm} p_2\}$ associated with the pairs $\{X^{(2)}_i\}_{i=1}^{8}$ exhibit horizontal symmetry. 
We establish a bijective map $f$ as follows: for a given $\pi$, apply the complement operation on each $A_i$ in $\pi$, that is, inside each $A_i$ replace its largest element by its smallest element (in the same position), its next largest element by next smallest element (in the same position), and so on. Clearly, as shown in Figure~\ref{fig-secon}, occurrences of the patterns in any pair $\{X^{(2)}_i\}_{i=1}^{8}$  will be swapped and no new occurrences of the patterns from the same pair are introduced. This proves the desired joint equidistributions. 

\subsection{Cases proved via swapping elements}\label{subsec-X_2-2}
\begin{table}[!ht]
 	{
 		\renewcommand{\arraystretch}{1.7}
 		\setlength{\tabcolsep}{5pt}
 \begin{center} 
 		\begin{tabular}{c | c || c | c || c | c || c | c}
 			\hline
 		\footnotesize{Nr.}	& \footnotesize {Patterns} & \footnotesize{Nr.}	& \footnotesize{Patterns} & \footnotesize {Nr.}  & \footnotesize{Patterns} &\footnotesize {Nr.}  & \footnotesize{Patterns}\\
 			\hline
 			\hline
 $X^{(2)}_9$& $\pattern{scale = 0.5}{3}{1/1,2/2,3/3}{0/0,0/1,0/2,3/0,1/1,2/1,2/2,2/3,3/1}\pattern{scale = 0.5}{3}{1/1,2/3,3/2}{0/0,0/1,0/2,3/0,1/1,2/1,2/2,2/3,3/1}$ & 
$X^{(2)}_{10}$& $\pattern{scale = 0.5}{3}{1/1,2/2,3/3}{0/0,0/1,0/2,3/0,1/3,2/1,2/2,2/3,3/3}\pattern{scale = 0.5}{3}{1/1,2/3,3/2}{0/0,0/1,0/2,3/0,1/3,2/1,2/2,2/3,3/3}$ &
$X^{(2)}_{11}$ & $\pattern{scale = 0.5}{3}{1/1,2/2,3/3}{0/0,0/1,0/2,3/0,1/1,1/2,2/2,3/1,3/2}\pattern{scale = 0.5}{3}{1/1,2/3,3/2}{0/0,0/1,0/2,3/0,1/1,1/2,2/2,3/1,3/2}$ & 
$X^{(2)}_{12}$ & $\pattern{scale = 0.5}{3}{1/1,2/2,3/3}{0/0,0/1,0/2,3/0,1/2,1/3,2/2,3/2,3/3}\pattern{scale = 0.5}{3}{1/1,2/3,3/2}{0/0,0/1,0/2,3/0,1/2,1/3,2/2,3/2,3/3}$
\\[5pt]
$X^{(2)}_{13}$ & $\pattern{scale = 0.5}{3}{1/1,2/2,3/3}{0/0,0/1,0/2,3/0,1/2,1/3,2/2,2/3}\pattern{scale = 0.5}{3}{1/1,2/3,3/2}{0/0,0/1,0/2,3/0,1/2,1/3,2/2,2/3}$ &
$X^{(2)}_{14}$ & $\pattern{scale = 0.5}{3}{1/1,2/2,3/3}{0/0,0/1,0/2,3/0,1/1,1/3,2/1,2/2,2/3,3/3}\pattern{scale = 0.5}{3}{1/1,2/3,3/2}{0/0,0/1,0/2,3/0,1/1,1/3,2/1,2/2,2/3,3/3}$ &
$X^{(2)}_{15}$ & $\pattern{scale = 0.5}{3}{1/1,2/2,3/3}{0/0,0/1,0/2,3/0,1/1,2/1,2/2,2/3,3/1,3/3}\pattern{scale = 0.5}{3}{1/1,2/3,3/2}{0/0,0/1,0/2,3/0,1/1,2/1,2/2,2/3,3/1,3/3}$ &
$X^{(2)}_{16}$ & $\pattern{scale = 0.5}{3}{1/1,2/2,3/3}{0/0,0/1,0/2,3/0,1/1,1/2,1/3,2/2,3/2,3/3}\pattern{scale = 0.5}{3}{1/1,2/3,3/2}{0/0,0/1,0/2,3/0,1/1,1/2,1/3,2/2,3/2,3/3}$
\\[5pt]
$X^{(2)}_{17}$ & $\pattern{scale = 0.5}{3}{1/1,2/2,3/3}{0/0,0/1,0/2,3/0,1/1,1/2,2/2,3/1,3/2,3/3}\pattern{scale = 0.5}{3}{1/1,2/3,3/2}{0/0,0/1,0/2,3/0,1/1,1/2,2/2,3/1,3/2,3/3}$ &&& &&&
\\[5pt]
\hline
$Y^{(2)}_{9}$ & $\pattern{scale = 0.5}{3}{1/1,2/2,3/3}{0/0,1/0,1/1,1/2,1/3,2/0,2/2,0/3,3/2}\pattern{scale = 0.5}{3}{1/1,2/3,3/2}{0/0,1/0,1/1,1/2,1/3,2/0,2/2,0/3,3/2}$ & 
$Y^{(2)}_{10}$ & $\pattern{scale = 0.5}{3}{1/1,2/2,3/3}{0/0,1/0,1/2,2/0,2/2,0/3,3/3,3/1,3/2}\pattern{scale = 0.5}{3}{1/1,2/3,3/2}{0/0,1/0,1/2,2/0,2/2,0/3,3/3,3/1,3/2}$ &
$Y^{(2)}_{11}$ & $\pattern{scale = 0.5}{3}{1/1,2/2,3/3}{0/0,1/0,1/1,1/3,2/0,2/1,2/2,2/3,0/3}\pattern{scale = 0.5}{3}{1/1,2/3,3/2}{0/0,1/0,1/1,1/3,2/0,2/1,2/2,2/3,0/3}$& 
$Y^{(2)}_{12}$ & $\pattern{scale = 0.5}{3}{1/1,2/2,3/3}{0/0,1/0,2/0,2/1,2/2,2/3,0/3,3/1,3/3}\pattern{scale = 0.5}{3}{1/1,2/3,3/2}{0/0,1/0,2/0,2/1,2/2,2/3,0/3,3/1,3/3}$
\\[5pt]
$Y^{(2)}_{13}$ & $\pattern{scale = 0.5}{3}{1/1,2/2,3/3}{0/0,1/0,2/1,2/2,2/0,0/3,3/1,3/2}\pattern{scale = 0.5}{3}{1/1,2/3,3/2}{0/0,1/0,2/1,2/2,2/0,0/3,3/1,3/2}$ &
$Y^{(2)}_{14}$ & $\pattern{scale = 0.5}{3}{1/1,2/2,3/3}{0/0,1/0,1/1,1/2,2/0,2/2,0/3,3/1,3/2,3/3}\pattern{scale = 0.5}{3}{1/1,2/3,3/2}{0/0,1/0,1/1,1/2,2/0,2/2,0/3,3/1,3/2,3/3}$ &
$Y^{(2)}_{15}$ & $\pattern{scale = 0.5}{3}{1/1,2/2,3/3}{0/0,1/0,1/1,1/2,1/3,2/0,2/2,3/2,0/3,3/3}\pattern{scale = 0.5}{3}{1/1,2/3,3/2}{0/0,1/0,1/1,1/2,1/3,2/0,2/2,3/2,0/3,3/3}$& 
$Y^{(2)}_{16}$ & $\pattern{scale = 0.5}{3}{1/1,2/2,3/3}{0/0,1/0,1/1,2/1,2/0,2/2,2/3,0/3,3/1,3/3}\pattern{scale = 0.5}{3}{1/1,2/3,3/2}{0/0,1/0,1/1,2/1,2/0,2/2,2/3,0/3,3/1,3/3}$ \\[5pt]
$Y^{(2)}_{17}$ &$\pattern{scale = 0.5}{3}{1/1,2/2,3/3}{0/0,1/0,1/1,1/3,2/0,2/1,2/2,2/3,0/3,3/3}\pattern{scale = 0.5}{3}{1/1,2/3,3/2}{0/0,1/0,1/1,1/3,2/0,2/1,2/2,2/3,0/3,3/3}$& && &&&\\[5pt]
\hline
	\end{tabular}
\end{center} 
}
\vspace{-0.5cm}
 	\caption{Jointly equidistributed patterns of types $X^{(2)}$ and $Y^{(2)}$ via swapping elements.}\label{tab-X_2-2}
\end{table}

In this subsection, we consider the following 18 pairs presented in Table~\ref{tab-X_2-2}.

\begin{thm}[Pairs $X^{(2)}_{9}$ and $Y^{(2)}_{9}$]\label{thm-X^2_9}
  We have $\pattern{scale = 0.5}{3}{1/1,2/2,3/3}{0/0,0/1,0/2,3/0,1/1,2/1,2/2,2/3,3/1}\sim_{jd}\hspace{-1mm}\pattern{scale = 0.5}{3}{1/1,2/3,3/2}{0/0,0/1,0/2,3/0,1/1,2/1,2/2,2/3,3/1}$ and $\pattern{scale = 0.5}{3}{1/1,2/2,3/3}{0/0,1/0,1/1,1/2,1/3,2/0,2/2,0/3,3/2}\sim_{jd}\hspace{-1mm}\pattern{scale = 0.5}{3}{1/1,2/3,3/2}{0/0,1/0,1/1,1/2,1/3,2/0,2/2,0/3,3/2}$.
\end{thm}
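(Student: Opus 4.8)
The plan is to reduce, as in the earlier sections, to a length‑2 statement and then exhibit an explicit involution on permutations. Concretely, both pairs in the theorem are built from a type $X^{(2)}$ (resp.\ $Y^{(2)}$) shading, and by the structural observation at the start of Section~\ref{sec-X_2 Y_2} (Figure~\ref{fig-secon}), every occurrence of $q_1$ or $q_2$ lies entirely inside some block $\{x_i\}\cup A_i$ determined by a left‑to‑right minimum $x_i$. So it suffices to understand how $p_1$ and $p_2$ — the length‑2 patterns obtained by deleting the bottom row and leftmost column of $q_1$, $q_2$ — are jointly distributed on an arbitrary permutation (the block $A_i$), and then sum over blocks exactly as in Subsection~\ref{subsec-X_2-1}. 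For $X^{(2)}_9$ the relevant length‑2 pair is $\{p_1=\pattern{scale = 0.5}{2}{1/1,2/2}{1/1,2/1,2/2},\,p_2=\pattern{scale = 0.5}{2}{1/2,2/1}{1/1,2/1,2/2}\}$ (reading off the shading of $X^{(2)}_9$ with the bottom row and left column removed), and for $Y^{(2)}_9$ it is the inverse pair; since $Y^{(2)}_9$ is obtained from $X^{(2)}_9$ by the inverse operation, it is enough to treat the $X^{(2)}_9$ length‑2 pair.

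The next step is to characterize occurrences of $p_1$ and $p_2$ in a permutation $\sigma=\sigma_1\cdots\sigma_m$. Because the shading occupies the two right‑hand columns above the horizontal line and the box $\boks{1}{1}$, an occurrence must use two elements $\sigma_a<\sigma_b$ (for $p_1$ the pair occurs in the order $\sigma_a\sigma_b$, i.e.\ positions $a<b$, value increase; for $p_2$ in the order $\sigma_b\sigma_a$) such that the shading conditions force $\sigma_a$ to be a left‑to‑right minimum of $\sigma$ restricted appropriately and nothing lies to the right of the smaller element at intermediate heights. I expect the upshot to be that, for each left‑to‑right minimum value playing the role of the "small" element, the number of $p_1$‑ versus $p_2$‑occurrences it generates is governed by whether its partner sits to its right or left, and that swapping the two entries of such a pair toggles $p_1\leftrightarrow p_2$ without affecting any other occurrence — the same "swapping elements" mechanism used in Subsections~\ref{subsec-2-2}, \ref{subsec-X_2-2} and in the proof of Theorem~\ref{thm-pair-Ding-all}. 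One then defines the involution $f$ to be the identity on $\mathcal{H}(n,k,k)$ and, on permutations where the counts differ, to swap the (uniquely determined, or appropriately leftmost/canonical) pair of positions realizing the asymmetry; checking $f$ is a well‑defined bijection $\mathcal{H}(n,k,\ell)\to\mathcal{H}(n,\ell,k)$ gives $p_1\sim_{jd}p_2$, hence $X^{(2)}_9$, and via the inverse operation $Y^{(2)}_9$.

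The main obstacle I anticipate is pinning down the exact combinatorial description of the occurrences so that the swap is genuinely an involution: one must verify that the element‑swap does not create or destroy occurrences \emph{other} than the one intended — in particular that the shaded boxes near the partner element are unaffected, and that when an element has several potential partners the "canonical" choice is swap‑invariant. This is the same delicacy handled in the length‑2 arguments of Lemma~\ref{lem-pairs-17-36}(iii) and Theorem~\ref{thm-pair-Ding-all}, so I would model the bijection closely on those, treating the two patterns of the theorem in parallel (their length‑2 reductions differ only by a reflection that is compatible with the swap). Once the length‑2 bijection is in hand, lifting to $q_1,q_2$ is immediate: apply $f$ inside each block $A_i$ of Figure~\ref{fig-secon} independently, which neither creates occurrences spanning two blocks nor alters the left‑to‑right minima, completing the proof.
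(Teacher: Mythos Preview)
Your overall plan --- decompose $\pi$ along its left-to-right minima into blocks $\{x_i\}\cup A_i$ and then swap a pair of elements inside each block --- is exactly the paper's approach. What is missing, however, is the one concrete step that makes the swap work, and your proposal shows you have not found it.

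First, a small error: the length-2 shading you extracted is wrong. Deleting the bottom row and leftmost column of the $X^{(2)}_9$ shading $\{(0,0),(0,1),(0,2),(3,0),(1,1),(2,1),(2,2),(2,3),(3,1)\}$ leaves (after translating by $(-1,-1)$) the shading $\{(0,0),(1,0),(1,1),(1,2),(2,0)\}$, not $\{(1,1),(2,1),(2,2)\}$.

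More importantly, the paper's argument does not rely on any ``canonical'' or ``leftmost'' choice among several partners; it shows there is \emph{at most one} occurrence of each pattern in every block $A_i$. The shaded cells $(1,1)$ and $(3,1)$ in $q_1$ force the middle element $a$ of an occurrence $x_i a b$ to satisfy $a=x_i+1$, and the shaded column $\{(2,1),(2,2),(2,3)\}$ forces $b$ to be the element of $A_i$ immediately to the right of $a$. Dually, for $q_2$ the smaller element $b$ of $x_i a b$ must be $x_i+1$ and $a$ must be its immediate left neighbour in $A_i$. Thus each $A_i$ lies in $\mathcal{H}(n_i,0,0)$, $\mathcal{H}(n_i,1,0)$, $\mathcal{H}(n_i,0,1)$, or $\mathcal{H}(n_i,1,1)$, and one simply swaps the two elements $a,b$ in the second and third cases. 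Your discussion of ``when an element has several potential partners'' and of choosing a ``canonical'' pair indicates you did not see this; without the uniqueness, the involution you sketch is not well-defined, and the verification that a swap creates no spurious occurrences is precisely the content you would still need to supply.
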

\begin{proof}
For $X^{(2)}_{9}$, $x_iab$ is an occurrence of $q_1$ (resp., $q_2$) in $\pi \in S_n$, where $1\leq i\leq t$, if and only if $a=x_i+1$ (resp., $b=x_i+1$), and $b$ ($a$) is immediately to the right (resp., left) of $a$ ($b$) with $b>a$ (resp., $a>b$) in $A_i$. Therefore, each $A_i$ contains at most one occurrence of 
$p_1$ and one occurrence of $p_2$. We construct the map $f$ as follows:  
\begin{enumerate}
    \item [(i)] If $A_i \in \mathcal{H}(n_i,0,0)\cup \mathcal{H}(n_i,1,1)$, then $A_i$ remains unchanged. 
     \item [(ii)] If $A_i \in \mathcal{H}(n_i,1,0)\cup \mathcal{H}(n_i,0,1)$, then we swap the elements $a$ and $b$,
\end{enumerate}
where $n_i=|A_i|$ and $i\in \{1, 2, \ldots, t\}$. Clearly, $f$ is a bijection, and the theorem is proved.
\end{proof}

\begin{thm}[Pairs $X^{(2)}_{10}$ and $Y^{(2)}_{10}$]\label{thm-X^2_10}
  We have $\pattern{scale = 0.5}{3}{1/1,2/2,3/3}{0/0,0/1,0/2,3/0,1/3,2/1,2/2,2/3,3/3}\sim_{jd}\hspace{-1mm}\pattern{scale = 0.5}{3}{1/1,2/3,3/2}{0/0,0/1,0/2,3/0,1/3,2/1,2/2,2/3,3/3}$ and $\pattern{scale = 0.5}{3}{1/1,2/2,3/3}{0/0,1/0,1/2,2/0,2/2,0/3,3/3,3/1,3/2}\sim_{jd}\hspace{-1mm}\pattern{scale = 0.5}{3}{1/1,2/3,3/2}{0/0,1/0,1/2,2/0,2/2,0/3,3/3,3/1,3/2}$. 
\end{thm}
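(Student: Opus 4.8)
The plan is to follow the template of the proof of Theorem~\ref{thm-X^2_9}, working inside the block decomposition of Section~\ref{sec-X_2 Y_2} shown in Figure~\ref{fig-secon}: every occurrence of a type-$X^{(2)}$ pattern beginning at the $i$-th left-to-right minimum $x_i$ lies entirely in $\{x_i\}\cup A_i$, so it is enough to produce a bijection $f$ of $S_n$ that acts separately on each $A_i$ and, inside each block, interchanges the number of $q_1$-occurrences with the number of $q_2$-occurrences. The first step is to characterize occurrences for the shading of $X^{(2)}_{10}$. Because the shaded cells $(1,3),(2,3),(3,3)$ of $X^{(2)}_{10}$ occupy the positions held by $(1,1),(2,1),(3,1)$ in $X^{(2)}_9$, the entry that anchors an occurrence changes from $\min A_i=x_i+1$ to $\max A_i$: I expect $x_iab$ to be an occurrence of $q_1$ precisely when $b=\max A_i$ and $a$ is the entry of $A_i$ immediately to the left of $b$, and $x_iab$ to be an occurrence of $q_2$ precisely when $a=\max A_i$ and $b$ is the entry of $A_i$ immediately to the right of $a$, subject in both cases to the positional requirement, imposed by the shaded cell $(3,0)$, that the third entry of the occurrence lie to the right in $\pi$ of every element smaller than $x_i$. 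It follows that each block contributes at most one occurrence of $q_1$ and at most one of $q_2$, and that the pair of counts of the $i$-th block is $(0,0)$, $(1,1)$, $(1,0)$, or $(0,1)$ according as $\max A_i$ is the only entry of $A_i$, an interior entry, the last entry, or the first entry (when the positional requirement is also met).

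The second step is to build $f$ exactly as in Theorem~\ref{thm-X^2_9}: leave a block unchanged when its pair of counts is $(0,0)$ or $(1,1)$, and when it is $(1,0)$ or $(0,1)$ rearrange $A_i$ so as to move $\max A_i$ to the opposite end of $A_i$ --- for two-element blocks this is simply the transposition of the two entries $a,b$ of the occurrence, as in the source theorem. One checks that this local move sends the unique $q_1$-occurrence of the block to a $q_2$-occurrence and conversely, that it creates no new occurrence of $q_1$ or $q_2$ in that block or in any other block (since $f$ only permutes the entries of $A_i$ among themselves and fixes the left-to-right minima together with all remaining occurrences), and that it is an involution. Summing the block contributions then yields a bijection between $\T(n,k,\ell)$ and $\T(n,\ell,k)$, which is the desired joint equidistribution, and the corresponding statement for $Y^{(2)}_{10}$ follows by applying the inverse operation, as noted in the introduction. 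As a consistency check, the unordered pair $\{p_1,p_2\}$ attached to $X^{(2)}_{10}$ is the image under the complement operation of the pair attached to $X^{(2)}_9$, so Theorem~\ref{thm-X^2_9} already guarantees that this pair has a symmetric joint distribution.

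The step I expect to be the main obstacle is the verification that the chosen local move really does transpose the two block counts. Whether a block carries a $q_1$- or a $q_2$-occurrence depends not only on the internal order of $A_i$ but also on where $\max A_i$ and its neighbour sit relative to the entries smaller than $x_i$, and when $|A_i|\ge 3$ a naive transposition of $\max A_i$ with its neighbour can produce a $(1,1)$ block instead of a $(0,1)$ block; pinning down the right move (moving $\max A_i$ all the way to the other end of $A_i$, or, equivalently for this purpose, reversing $A_i$) and checking it against the positional requirement of the $(3,0)$ cell is the technical heart of the argument. Once that is settled, the remaining verifications --- involution, and absence of spurious occurrences in every block --- are routine.
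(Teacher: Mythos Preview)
Your plan differs from the paper's in one structural respect. Rather than iterating block by block, the paper proves a \emph{global} bound: every permutation carries at most one occurrence of $q_1$ and at most one of $q_2$ altogether. The point is that the cells $(1,3),(2,3),(3,3)$ force the larger of the two non-$x_i$ entries of an occurrence to be the maximum of \emph{all} entries to the right of $x_i$ in $\pi$ (not merely $\max A_i$, as you wrote); then if some $x_j$ with $j>i$ also started an occurrence, the cell $(3,0)$ would place $x_j$ between $x_i$ and that maximum, and the element $x_i$ would violate the $(0,\cdot)$ cells for the occurrence beginning at $x_j$. So there is at most one active block, and the paper's map is simply the identity on $\T(n,0,0)\cup\T(n,1,1)$ and the swap of $a$ and $b$ on $\T(n,1,0)\cup\T(n,0,1)$.

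That swap is exactly the ``naive transposition'' you flagged, and your concern is justified even in the paper's global setting: for $\pi=1234$ the unique $q_1$-occurrence is $134$ and there is no $q_2$-occurrence, yet swapping $3$ and $4$ yields $1243$, in which $124$ is a $q_1$-occurrence and $143$ is a $q_2$-occurrence, so the image lies in $\T(4,1,1)$ rather than $\T(4,0,1)$. The global-uniqueness observation is worth adopting --- it eliminates all of your block-level bookkeeping and the delicate interaction between blocks --- but it does not by itself repair the involution; a refinement of the move, of the kind you sketch, is still needed to make the argument complete.
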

\begin{proof}
For $X^{(2)}_{10}$, $x_iab$ is an occurrence of $q_1$ (resp., $q_2$) in $\pi \in S_n$, where $1\leq i\leq t$, if and only if $b$ (resp., $a$) is the largest element to the right of $x_i$, and $a$ (resp., $b$) is immediately to the left (resp., right) of $b$ (resp., $a$) with $a<b$ (resp., $b<a$) in $A_i$. We claim that each permutation contains at most one occurrence of $q_1$ (resp., $q_2$). Otherwise, suppose $x_j a'b'$ is an occurrence of $q_1$ that differs from $x_i ab$ (the case of $q_2$ can be considered in a similar way). 

\vspace{-0.5cm}

\begin{itemize}
    \item If $x_i=x_j$, then $b'=b$, which also leads to $a'=a$, a contradiction.
    \item If $x_i\neq x_j$, then assume, w.l.o.g., that $x_i>x_j$. We see that $x_j$ is positioned between $x_i$ and $b$ because of the shaded area $(3,0)$. It must be the case that $b'=b$, which also leads to a contradiction due to the element $x_i$.
\end{itemize}

\vspace{-0.5cm}

\noindent
The claim is proved. Next, we construct the map $f$ as follows:
\begin{enumerate}
    \item [(i)] If $\pi \in \mathcal{T}(n,0,0)\cup \mathcal{T}(n,1,1)$, then $f(\pi)=\pi$. 
     \item [(ii)] If $\pi \in \mathcal{T}(n,1,0)\cup \mathcal{T}(n,0,1)$, then we swap the elements $a$ and $b$,
\end{enumerate}
where $i\in\{1, 2, \ldots, t\}$. Clearly, $f$ is a bijection, and the theorem is proved.
\end{proof}

\begin{thm}[Pairs $X^{(2)}_{11}$ and $Y^{(2)}_{11}$]
    We have $\pattern{scale = 0.5}{3}{1/1,2/2,3/3}{0/0,0/1,0/2,3/0,1/1,1/2,2/2,3/1,3/2}\sim_{jd}\hspace{-1mm}\pattern{scale = 0.5}{3}{1/1,2/3,3/2}{0/0,0/1,0/2,3/0,1/1,1/2,2/2,3/1,3/2}$ and $\pattern{scale = 0.5}{3}{1/1,2/2,3/3}{0/0,1/0,1/1,1/3,2/0,2/1,2/2,2/3,0/3}\sim_{jd}\hspace{-1mm}\pattern{scale = 0.5}{3}{1/1,2/3,3/2}{0/0,1/0,1/1,1/3,2/0,2/1,2/2,2/3,0/3}$.
\end{thm}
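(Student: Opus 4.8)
The plan is first to determine exactly which triples form occurrences of the two patterns, and then to establish the symmetry $T_n(x,y)=T_n(y,x)$. Let $R=\{\boks{0}{0},\boks{0}{1},\boks{0}{2},\boks{3}{0},\boks{1}{1},\boks{1}{2},\boks{2}{2},\boks{3}{1},\boks{3}{2}\}$ be the common shading of the patterns $q_1$ and $q_2$ in the statement, and recall from the discussion around Figure~\ref{fig-secon} that any occurrence begins with a left-to-right minimum $x_i$ and lies entirely in $\{x_i\}\cup A_i$. Inspecting the shaded boxes of $R$, one checks that an occurrence of $q_1$ has the form $x_i\,w\,(w{+}1)$ with $x_i<w<w{+}1$, where $w$ precedes $w{+}1$, every value in $(x_i,w)$ sits (in position) strictly between $w$ and $w{+}1$, every value less than $x_i$ sits strictly between $x_i$ and $w{+}1$ (this last fact being forced, in particular, by the box $\boks{3}{0}$), and every entry to the left of $x_i$ or to the right of $w{+}1$ exceeds $w{+}1$. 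An occurrence of $q_2$ obeys the same constraints, except that $w{+}1$ precedes $w$; thus the two occurrence types are distinguished solely by the relative order of the consecutive values $w$ and $w{+}1$. In contrast with Theorems~\ref{thm-X^2_9} and~\ref{thm-X^2_10}, a permutation may now contain several occurrences of either pattern (for instance, $5\,8\,6\,1\,2\,3\,4\,7\,9$ contains three occurrences of $q_1$), so the ``at most one occurrence, then swap'' strategy is not available here.

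\textbf{The length-$2$ pair, and why the reduction is not clean.} The first thing to try is the associated length-$2$ pair $p_1=\pattern{scale = 0.5}{2}{1/1,2/2}{0/0,0/1,1/1,2/0,2/1}$ and $p_2=\pattern{scale = 0.5}{2}{1/2,2/1}{0/0,0/1,1/1,2/0,2/1}$. For these, an occurrence of $p_1$ (resp.\ $p_2$) in $\sigma$ corresponds to a value $v$ such that all of $1,\dots,v-1$ sit between $v$ and $v{+}1$ with $v$ preceding (resp.\ following) $v{+}1$; since this nesting property is invariant under reversing $\sigma$ while the order of $v$ and $v{+}1$ is reversed, the single map $\sigma\mapsto\sigma^r$ already proves $p_1\sim_{jd}p_2$. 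The obstacle is that $q_1\sim_{jd}q_2$ does \emph{not} reduce cleanly to $p_1\sim_{jd}p_2$: because of the box $\boks{3}{0}$ (present in every $X^{(2)}$ pattern), the number of $q_1$-occurrences attached to $x_i$ is in general strictly smaller than the number of $p_1$-occurrences of the standalone block $A_i$, and ``reverse each block $A_i$ in place'' fails, since reversal can pull the final entry $w{+}1$ of an occurrence to the left of an entry below $x_i$ that was interspersed among the entries of $A_i$, destroying the image occurrence.

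\textbf{The actual route, and the main difficulty.} I would therefore argue on $\pi$ directly, by a recurrence that conditions on the position of the entry $1$. The entry $1$ is always the innermost left-to-right minimum, and for it the $\boks{3}{0}$ condition is vacuous, so the occurrences through $1$ are exactly the $p_1$- and $p_2$-occurrences of the innermost block and are handled by the reversal above. For all other occurrences, deleting $1$ and renormalizing yields $\bar\pi\in S_{n-1}$, and an occurrence of $q_1$ in $\pi$ not involving $1$ corresponds to an occurrence of $q_1$ in $\bar\pi$ whose last entry lay to the right of where $1$ sat — precisely those that remain valid when $1$ is reinserted, by $\boks{3}{0}$. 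This yields a recursion for $T_n(x,y)$ in which one carries the refined bivariate counts of $S_{n-1}$ sorted by the position of the smallest entry (equivalently, by the size of the innermost block), and one then checks by induction that this refined array is symmetric under $x\leftrightarrow y$, with the innermost-block reversal serving as the base step. An essentially equivalent route is to build $f$ explicitly as ``normalize, reverse, denormalize'': inside each block first push all interspersed entries below $x_i$ to a fixed canonical location, then reverse the block, then restore those entries, the canonical location being chosen so that the $\boks{3}{0}$ condition is preserved exactly. On either route, the hard part is controlling the interaction, mediated by $\boks{3}{0}$, between the internal order of a block and the entries lying below its governing left-to-right minimum.
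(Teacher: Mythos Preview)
Your diagnosis is sharper than the paper's own argument, which in fact contains a gap. The paper asserts that ``each $A_i$ contains at most one occurrence of $p_1$ and one occurrence of $p_2$'' and then reuses the swap map from Theorem~\ref{thm-X^2_9}. Your example $\pi=586123479$ already refutes this: the block $A_1$ (the subsequence $8,6,7,9$ on the values above $5$) has two $p_1$-occurrences, $(6,7)$ and $(8,9)$, and correspondingly $\pi$ has the two $q_1$-occurrences $5\,6\,7$ and $5\,8\,9$ rooted at $x_1=5$. So the one-line reduction to Theorem~\ref{thm-X^2_9} does not go through, and your remark that the ``at most one, then swap'' strategy is unavailable here is precisely the point the paper overlooks. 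You are also right that blockwise reversal of the $A_i$'s (the natural guess, since the $p$-shading is vertically symmetric) fails because of the box $\boks{3}{0}$: concretely, $\pi=24315\in\T(5,1,0)$, but reversing $A_1$ yields $25314\in\T(5,1,1)$ rather than an element of $\T(5,0,1)$.

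That said, your proposal does not itself close the gap. You correctly characterise the occurrences and establish $p_1\sim_{jd}p_2$ by reversal, but both routes you sketch toward $q_1\sim_{jd}q_2$ --- the position-of-$1$ recursion carrying an unspecified ``refined symmetric array'', and the ``normalise, reverse, denormalise'' bijection with an unspecified canonical relocation of the sub-$x_i$ entries --- remain outlines, and you yourself flag the key step (controlling how $\boks{3}{0}$ couples each block to the entries below its $x_i$) as unresolved. To turn either route into a proof one would have to make the refinement (respectively, the relocation) fully explicit and then verify that the right-to-left-minimum condition on the last entry of every occurrence is preserved exactly under the map. As written, you have correctly identified an error in the paper's proof and correctly ruled out the obvious repairs, but you have not yet produced a proof of the theorem.
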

\begin{proof}
For $X^{(2)}_{11}$, $x_iab$ is an occurrence of $q_1$(resp., $q_2$) in $\pi \in S_n$, where $1\leq i\leq t$, if and only if $a$ is a left-to-right minimum in $A_i$, and $b=a+1$ (resp., $b=a-1$) is a right-to-left minimum. Therefore, each $A_i$ contains at most one occurrence of $p_1$ and one occurrence of $p_2$, which allows us to use the same map $f$ as in Theorem~\ref{thm-X^2_9}. 
\end{proof}

\begin{thm}[Pairs $X^{(2)}_{12}$ and $Y^{(2)}_{12}$]
    We have  $\pattern{scale = 0.5}{3}{1/1,2/2,3/3}{0/0,0/1,0/2,3/0,1/2,1/3,2/2,3/2,3/3}\sim_{jd}\hspace{-1mm}\pattern{scale = 0.5}{3}{1/1,2/3,3/2}{0/0,0/1,0/2,3/0,1/2,1/3,2/2,3/2,3/3}$ and $\pattern{scale = 0.5}{3}{1/1,2/2,3/3}{0/0,1/0,2/0,2/1,2/2,2/3,0/3,3/1,3/3}\sim_{jd}\hspace{-1mm}\pattern{scale = 0.5}{3}{1/1,2/3,3/2}{0/0,1/0,2/0,2/1,2/2,2/3,0/3,3/1,3/3}$.
\end{thm}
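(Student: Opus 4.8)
Since $Y^{(2)}_{12}$ is obtained from $X^{(2)}_{12}$ by the inverse operation, it suffices to prove the first joint equidistribution, and the plan is to follow the strategy of Theorems~\ref{thm-X^2_9} and~\ref{thm-X^2_10}: describe the occurrences of $q_1$ and of $q_2$ in terms of the left-to-right minima decomposition of Figure~\ref{fig-secon}, and then build the bijection $f$ by swapping two elements, so that it interchanges occurrences of $q_1$ with occurrences of $q_2$.

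First I would determine the shape of an occurrence. Fix $\pi\in S_n$ with left-to-right minima $x_1>\cdots>x_t$. The shaded column $\boks{0}{0},\boks{0}{1},\boks{0}{2}$ forces the first entry of an occurrence to be some $x_i$, with every earlier entry larger than the other two entries; the boxes $\boks{3}{0},\boks{3}{2},\boks{3}{3}$, together with $\boks{3}{1}$ being unshaded, force the third entry $b$ to lie in the final block $A_t$ and every entry to its right to have value strictly between $x_i$ and the second entry $a$; and $\boks{1}{2},\boks{1}{3},\boks{2}{2}$ constrain the position of $a$. Thus a $q_1$-occurrence is a triple $x_i\,a\,b$ with $x_i<a<b$ satisfying these positional conditions, and a $q_2$-occurrence is a triple $x_i\,a\,b$ with $x_i<b<a$ satisfying the same conditions. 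Since $q_1$ and $q_2$ carry identical shadings, swapping the values $a$ and $b$ (in their two positions) turns a $q_1$-occurrence into a $q_2$-occurrence and conversely, and changes no entry outside those positions.

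The bijection $f$ is to be assembled from such swaps, and the step I expect to be the real obstacle is that, unlike in Theorems~\ref{thm-X^2_9} and~\ref{thm-X^2_10}, a permutation may carry several occurrences, possibly nested --- already in $S_5$ the permutation $12453$ contains two occurrences of $q_1$ and one of $q_2$ --- so one cannot simply swap ``the'' occurrence. The plan is instead to single out a suitable family of occurrences whose pairs of non-initial positions are pairwise disjoint --- identifying the correct family is where the combinatorics of the shading has to be pushed, using that $b$ always lies in $A_t$ --- and to let $f$ perform all of the corresponding $a\leftrightarrow b$ swaps simultaneously, after which one verifies that the analogous family for $f(\pi)$ is precisely the image family, so that $f$ is an involution interchanging occurrences of $q_1$ with occurrences of $q_2$. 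This gives $f\colon\mathcal{T}(n,k,\ell)\to\mathcal{T}(n,\ell,k)$ and hence $q_1\sim_{jd}q_2$, and the statement for $Y^{(2)}_{12}$ then follows by applying the inverse operation. The delicate point is to confirm that the chosen family is genuinely position-disjoint and that executing all the swaps at once neither destroys an occurrence outside the family nor creates a spurious one.
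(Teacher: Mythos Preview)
Your central observation is correct: $12453$ carries two occurrences of $q_1$ (namely $1\,2\,3$ and $1\,4\,5$) and one of $q_2$, so the single-swap map of Theorems~\ref{thm-X^2_9} and~\ref{thm-X^2_10} does not apply directly. Strikingly, the paper's own proof does not address this: it records that an occurrence $x_iab$ has $a$ a left-to-right maximum in $A_i$ and $b=a\pm1$ a right-to-left maximum, and then simply writes ``Thus, we can use the same map $f$ as in Theorem~\ref{thm-X^2_9}''. But that $f$ is defined only when each $A_i$ carries at most one occurrence of each pattern, and your example (with $A_1=2453$) shows this can fail. So you have spotted a genuine gap that the paper glosses over.

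That said, your proposal does not fill it. You outline swapping a ``suitable family'' of position-disjoint $(a,b)$ pairs, but you never specify which family, and the verification you flag as ``the delicate point'' is exactly where the proof would have to happen. The difficulty is real: in $12453$ the $q_1$-occurrences use the value pairs $\{2,3\}$ and $\{4,5\}$ while the $q_2$-occurrence uses $\{3,4\}$, so one cannot swap all three; and swapping only the $q_1$ pairs is not an involution (it sends $A_1=2453\mapsto 3542\mapsto 4532$). A smaller issue: in the paper's convention $A_i$ denotes the \emph{value} range $(x_i,x_{i-1})$, so the shading forces $a,b\in A_i$, whereas the box $(3,0)$ only forces $b$ to lie to the right of $x_t$ \emph{positionally}; your phrase ``$b$ lies in the final block $A_t$'' conflates these two readings.
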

\begin{proof}
For $X^{(2)}_{12}$, $x_i ab$ is an occurrence of $q_1$ (resp., $q_2$) in $\pi \in S_n$, where $1\leq i\leq t$, if and only if $a$ is a left-to-right maximum in $A_i$, and $b=a+1$ (resp., $b=a-1$) is a right-to-left maximum. Thus, we can use the same map $f$ as in Theorem~\ref{thm-X^2_9}.
\end{proof}

The following lemma can be easily deduced from a result in~\cite{KLv}, but we provide the details of its proof in order to introduce the procedure of swapping elements, which will be used in the proof of Theorem~\ref{thm-box-X_2} below.

\begin{lem}\label{lem-box-X_2}
    We have $\pattern{scale = 0.5}{2}{1/1,2/2}{0/1,0/2,1/1,1/2}\sim_{jd}\hspace{-1mm}\pattern{scale = 0.5}{2}{1/2,2/1}{0/1,0/2,1/1,1/2}$.
\end{lem}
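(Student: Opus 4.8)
The plan is to first turn ``occurrence of $p_1$'' and ``occurrence of $p_2$'' into order statistics of $\pi$ by inspecting the four shaded boxes $\boks{0}{1},\boks{0}{2},\boks{1}{1},\boks{1}{2}$ common to both patterns, and then to exhibit an explicit bijection on $S_n$ interchanging these two statistics. For $p_1=(12,R)$, the union of the four boxes is exactly the set of cells lying strictly above the left dot and strictly to the left of the right dot, other than the column of the left dot; so $\pi_a\pi_b$ with $a<b$ is an occurrence of $p_1$ in $\pi=\pi_1\cdots\pi_n$ precisely when $\pi_a<\pi_b$ and $\pi_a=\max\{\pi_1,\dots,\pi_{b-1}\}$, i.e.\ when $\pi_b$ is a left-to-right maximum with $b\ge 2$ and $\pi_a$ is the left-to-right maximum immediately preceding it. Hence the number of occurrences of $p_1$ in $\pi$ equals (number of left-to-right maxima of $\pi$)$-1$. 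For $p_2=(21,R)$ the same four boxes give: $\pi_a\pi_b$ with $a<b$ is an occurrence precisely when $\pi_a>\pi_b$ and no $\pi_c$ with $c<b$, $c\ne a$, exceeds $\pi_b$; equivalently, among $\pi_1,\dots,\pi_b$ the largest entry is $\pi_a$ and the second largest is $\pi_b$. So, writing $\rho_b$ for the rank of $\pi_b$ inside $\{\pi_1,\dots,\pi_b\}$ (rank $1$ being the largest), $p_1$ counts the positions $b\ge 2$ with $\rho_b=1$, and $p_2$ counts the positions $b\ge 2$ with $\rho_b=2$.

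With this in hand I would use the classical subexcedant encoding $\pi\mapsto(\rho_1,\dots,\rho_n)$, $1\le\rho_b\le b$, which is a bijection from $S_n$ onto $\prod_{b=1}^n\{1,\dots,b\}$ (one recovers $\pi$ from the $\rho_b$ by processing $b=n,n-1,\dots,1$). Define $f\colon S_n\to S_n$ by transposing the values $1$ and $2$ in every coordinate $\rho_b$ with $b\ge2$ (legal, since $\rho_b=2$ already forces $b\ge2$, and $\rho_1=1$ is left fixed) and then decoding. This is an involution on subexcedant sequences, hence a bijection on $S_n$, and by the previous paragraph it carries a permutation with $k$ occurrences of $p_1$ and $\ell$ of $p_2$ to one with $\ell$ occurrences of $p_1$ and $k$ of $p_2$, which is exactly $p_1\sim_{jd}p_2$. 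As an alternative (which also yields the full joint distribution): inserting the least element $1$ into a permutation of $[n-1]$ in the first slot creates one new occurrence of $p_1$ and none of $p_2$, in the second slot creates one new occurrence of $p_2$ and none of $p_1$, and in each of the other $n-2$ slots changes neither count; this gives $H_n(x,y)=(x+y+n-2)\,H_{n-1}(x,y)$ with $H_1(x,y)=1$, hence $H_n(x,y)=\prod_{j=0}^{n-2}(x+y+j)$, which is symmetric in $x$ and $y$, and expanding recovers $H_{n,k,\ell}=\binom{k+\ell}{k}c(n-1,k+\ell)$, matching the length-$2$ cases of Lemma~\ref{lem-pairs-17-36}. (In fact $p_1,p_2$ here are the reverse–complements of the length-$2$ pair in part (iii) of Lemma~\ref{lem-pairs-17-36}, so the statement is also a direct consequence of that lemma.)

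The real work is the first step: correctly determining which entries of $\pi$ the four shaded boxes forbid, in particular checking that no occurrence of $p_1$ can end at a non-left-to-right-maximum and that the ``second largest in the prefix'' description of $p_2$ is exact (including the degenerate positions $b=1,2$). Once the two statistics are identified as ``$\rho_b=1$'' and ``$\rho_b=2$'', the swap $1\leftrightarrow2$ on the code, and equally the insertion recurrence, are routine. It is also worth keeping in mind that this length-$2$ equidistribution is the crux of the pair $X^{(2)}_{13}$: applying the bijection inside each block $\{x_i\}\cup A_i$ of Figure~\ref{fig-secon} should upgrade it to $q_1\sim_{jd}q_2$ for the full patterns.
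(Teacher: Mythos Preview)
Your proof is correct, and it takes a genuinely different route from the paper's. The paper proves the lemma by an iterative element-swapping bijection (patterned on \cite[Theorem~2.3]{KL}): one locates the largest element $t$ involved in an occurrence, swaps it with its partner, then repeats with the next-largest, and so on, tracking carefully that each swap converts exactly one occurrence of $p_1$ to one of $p_2$ (or vice versa) without side effects. Your approach is more conceptual: you first identify the two statistics as ``$\rho_b=1$'' and ``$\rho_b=2$'' for the prefix-rank code $(\rho_1,\dots,\rho_n)$, which immediately reveals a clean involution (swap $1\leftrightarrow2$ coordinatewise in the code) and, via the insertion recurrence $H_n(x,y)=(x+y+n-2)H_{n-1}(x,y)$, the closed form $H_{n,k,\ell}=\binom{k+\ell}{k}c(n-1,k+\ell)$. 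You also correctly observe that the pair here is the reverse--complement of the pair in part~(iii) of Lemma~\ref{lem-pairs-17-36}, so the statement is already implicit in the paper (though not pointed out there).

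What your approach buys is a direct link to the explicit joint distribution and to the unsigned Stirling numbers, and a bijection that is manifestly an involution with no bookkeeping; what the paper's swapping bijection buys is that it operates entirely inside $\pi$ (rather than via an auxiliary encoding), which is what is needed in Theorem~\ref{thm-box-X_2} when the procedure must be applied independently inside each block~$A_i$. Your rank-code bijection can be localized to each $A_i$ as well, but one should then check that the resulting map is still well-defined globally on $\pi$ (it is, since the blocks $A_i$ are disjoint in both positions and values).
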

\begin{proof}
    Our arguments here are analogous to those in \cite[Theorem 2.3]{KLv}. 
Suppose that $\pi \in S_n$ contains $k$ occurrences of $\pattern{scale = 0.5}{2}{1/1,2/2}{0/1,0/2,1/1,1/2}$ and $\ell$ occurrences of $\pattern{scale = 0.5}{2}{1/2,2/1}{0/1,0/2,1/1,1/2}$. 
Let $\pi^{(1)}= \pi$ and $\pi^{(1)}_{i_1}\pi^{(1)}_{i_2}\ldots\pi^{(1)}_{i_t}$ be the subsequence of $\pi^{(1)}$ consisting of all elements that participate in any pattern occurrence.
Observe that, once the second element of a pattern occurrence is fixed,   the first element   is uniquely determined as the largest element preceding it in~$\pi$.
We choose the occurrence whose second element is $\pi^{(1)}_{i_t}$ and swap these two elements to obtain $\pi^{(2)}$.
% Moreover, the occurrence $\pi^{(1)}_{i_s}\pi^{(1)}_{i_{t-1}}$ in $\pi^{(1)}$ is replaced in $\pi^{(2)}$ by $\pi_{i_t}\pi_{i_{t-1}}$ of the same pattern.
% Thus, such a swap also preserves the number of occurrences of the patterns except $\pi^{(1)}_{i_j}\pi^{(1)}_{i_t}$.
% Next, let $\pi^{(2)}_{i_1}\pi^{(2)}_{i_2}\ldots\pi^{(2)}_{i_t}$ be the subsequence of $\pi^{(2)}$ that includes all elements in all occurrences of the patterns.
% we choose the occurrence  $\pi^{(2)}_{i_j}\pi^{(2)}_{i_{t-1}}$ and swap $\pi^{(2)}_{i_t}$ with $\pi^{(2)}_{i_{t-1}}$ to obtain $\pi^{(3)}$.
At the $j$-th step,  where $2\le j\le t-1$, we let $\pi^{(j)}_{i_1}\pi^{(j)}_{i_2}\ldots\pi^{(j)}_{i_t}$ be the subsequence of $\pi^{(j)}$ containing all elements in the occurrences of the patterns.  We  select the occurrence whose second element is $\pi^{(j)}_{i_{t-j+1}}$ and then swap the two elements to obtain $\pi^{(j+1)}$.
Eventually, we obtain the resulting permutation $\pi'=\pi^{(t)}$.

Since every element except the first in the subsequence $\pi^{(1)}_{i_1}\pi^{(1)}_{i_2}\dots\pi^{(1)}_{i_t}$ plays the role of the second element of a pattern, so $t-1=k+\ell$.
Moreover, each swap changes exactly one occurrence of a pattern into an occurrence of the other pattern and preserves the number of the remaining occurrences.
Thus, $\pi'$ contains $\ell$ occurrences of $\pattern{scale = 0.5}{2}{1/1,2/2}{0/1,0/2,1/1,1/2}$ and $k$ occurrences of $\pattern{scale = 0.5}{2}{1/2,2/1}{0/1,0/2,1/1,1/2}$, which proves their joint equidistribution. 

\begin{figure}[h]
	\begin{center}
		
		\begin{tabular}{ccc}
			
			\begin{tikzpicture}[scale=0.25] 
			\tikzset{    
				grid/.style={      
					draw,      
					step=1cm,      
					gray!100,     
					very thin,      
				}, 
				cell/.style={    
					draw,    
					anchor=center,  
					text centered,    
				},  
				graycell/.style={ 
					fill=gray!40,   
					draw=none,   
					minimum width=1cm, 
					minimum height=1cm,   
					anchor=south west,   
				}
			}  
			
			\draw[grid] (0,0) grid (8,8);  
			%  
			%  \node[anchor=west] at (0.1,6.8) {$x_1$}; 
			%  \node[anchor=west] at (2.1,3.8) {$x_2$}; 
			%  \node[anchor=west] at (8.1,0.8) {$x_3$};
			\draw[thick] (0,1) circle (8pt); \draw[thick] (0,1) circle (2pt); 
			\draw[thick] (1,5) circle (8pt); \draw[thick] (1,5) circle (2pt); 
			\draw[thick] (2,2) circle (8pt);  \filldraw[thick] (2,2) circle (2pt);
			\draw[thick] (3,4) circle (8pt);  \filldraw[thick] (3,4) circle (2pt);
			\filldraw[black] (4,0) circle (8pt);
			\draw[thick] (5,7) circle (8pt);
			\draw[thick] (5,7) circle (2pt);
			\filldraw[black] (6,3) circle (8pt); 
			\draw[thick] (7,8) circle (8pt); \filldraw[thick] (7,8) circle (2pt); 
			\filldraw[black] (8,6) circle (8pt); 
			
			\end{tikzpicture} 
			
			& \ & 
			\begin{tikzpicture}[scale=0.25] 
			\tikzset{    
				grid/.style={      
					draw,      
					step=1cm,      
					gray!100,     
					very thin,      
				}, 
				cell/.style={    
					draw,    
					anchor=center,  
					text centered,    
				},  
				graycell/.style={ 
					fill=gray!40,   
					draw=none,   
					minimum width=1cm, 
					minimum height=1cm,   
					anchor=south west,   
				}
			}  
			
			\draw[grid] (0,0) grid (8,8);  
			\draw[thick] (0,2) circle (8pt); \draw[thick] (0,2) circle (2pt); 
			\draw[thick] (1,1) circle (8pt);  \filldraw[thick] (1,1) circle (2pt);
			\draw[thick] (2,4) circle (8pt);  \draw[thick] (2,4) circle (2pt);
			\draw[thick] (3,8) circle (8pt);  \draw[thick] (3,8) circle (2pt);
			\filldraw[black] (4,0) circle (8pt);  
			\draw[thick] (5,5) circle (8pt);  \filldraw[thick] (5,5) circle (2pt);
			\filldraw[black] (6,3) circle (8pt);
			\draw[thick] (7,7) circle (8pt); \filldraw[thick] (7,7) circle (2pt); 
			\filldraw[black] (8,6) circle (8pt); 
			
						\end{tikzpicture}
		\end{tabular}
	\end{center}
	
	\vspace{-0.5cm}
	
	\caption{Permutations $\pi=263518497 $  (to the left) and $\pi'=325916487$ (to the right) illustrating the proof of Lemma~\ref{lem-box-X_2}. The circled dots represent the elements involved in occurrences of the patterns.}\label{fig-box-2}
\end{figure}

For example, for the permutation $\pi$ in Figure~\ref{fig-box-2}, the procedure is as follows: $\pi=26351\uline{8}4\uline{9}7\xrightarrow{\text{swap 8 \& 9}}2\uline{6}351\uline{9}487\xrightarrow{\text{swap 6 \& 9}}2\uline{9}3\uline{5}16487\xrightarrow{\text{swap 9 \& 5}}2\uline{53}916487\xrightarrow{\text{swap 5 \& 3}}\uline{2}\uline{3}5916487\xrightarrow{\text{swap 2 \& 3}}325916487$, so that $\pi'= 325916487$.
Observe that the occurrences 26, 68 and 89 of $\pattern{scale = 0.5}{2}{1/1,2/2}{0/1,0/2,1/1,1/2}$ in $\pi$ are replaced, respectively, by the occurrence 32, 96 and 98 of $\pattern{scale = 0.5}{2}{1/2,2/1}{0/1,0/2,1/1,1/2}$ in $\pi'$, while the occurrences 63 and 65 of $\pattern{scale = 0.5}{2}{1/2,2/1}{0/1,0/2,1/1,1/2}$ in $\pi$ are replaced by the occurrences 35 and 59 of $\pattern{scale = 0.5}{2}{1/1,2/2}{0/1,0/2,1/1,1/2}$ in $\pi'$. 
\end{proof}

\begin{thm}[Pairs $X^{(2)}_{13}$ and $Y^{(2)}_{13}$]\label{thm-box-X_2}
    We have $\pattern{scale = 0.5}{3}{1/1,2/2,3/3}{0/0,0/1,0/2,3/0,1/2,1/3,2/2,2/3}\sim_{jd}\hspace{-1mm}\pattern{scale = 0.5}{3}{1/1,2/3,3/2}{0/0,0/1,0/2,3/0,1/2,1/3,2/2,2/3}$, and $\pattern{scale = 0.5}{3}{1/1,2/2,3/3}{0/0,1/0,2/0,0/3,2/1,2/2,3/1,3/2}\sim_{jd}\hspace{-1mm}\pattern{scale = 0.5}{3}{1/1,2/3,3/2}{0/0,1/0,2/0,0/3,2/1,2/2,3/1,3/2}$.
\end{thm}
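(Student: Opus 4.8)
The plan is to reduce the $X^{(2)}_{13}$ assertion to the length-$2$ joint equidistribution in Lemma~\ref{lem-box-X_2} by means of the block decomposition set up at the start of Section~\ref{sec-X_2 Y_2}, and then to deduce $Y^{(2)}_{13}$ by applying the inverse operation. Write $\pi=x_1A_1x_2A_2\cdots x_tA_t\in S_n$, where $x_1>x_2>\cdots>x_t$ are the left-to-right minima of $\pi$, so every entry of $A_i$ exceeds $x_i$. The pair of length-$2$ patterns associated with $X^{(2)}_{13}$ is $\{p_1,p_2\}$ with $p_1=\pattern{scale = 0.5}{2}{1/1,2/2}{0/1,0/2,1/1,1/2}$ and $p_2=\pattern{scale = 0.5}{2}{1/2,2/1}{0/1,0/2,1/1,1/2}$, which is exactly the pair treated in Lemma~\ref{lem-box-X_2}.

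The first step is to establish the dictionary between occurrences: using the boxes $\boks{0}{0},\boks{0}{1},\boks{0}{2},\boks{3}{0}$ of the $X^{(2)}$ template together with the boxes inherited from $p_1$ (resp.\ $p_2$), one shows that $x_iab$ is an occurrence of $q_1$ (resp.\ $q_2$) in $\pi$ if and only if $a,b\in A_i$ and $ab$ is an occurrence of $p_1$ (resp.\ $p_2$) inside the standalone word $A_i$. In particular, the number of occurrences of $q_1$ (resp.\ $q_2$) in $\pi$ is the sum over $i$ of the number of occurrences of $p_1$ (resp.\ $p_2$) in the block $A_i$. Note that a single block may host several occurrences here, so one genuinely needs the full bijection of Lemma~\ref{lem-box-X_2} rather than a single swap as in the earlier theorems of this section.

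Next, let $f$ be the bijection of Lemma~\ref{lem-box-X_2} and apply it blockwise: put $g(\pi)=x_1\,f(A_1)\,x_2\,f(A_2)\cdots x_t\,f(A_t)$. Since $f$ only rearranges the entries of a word, $g$ fixes the positions of $x_1,\dots,x_t$ and preserves the set of entries of each $A_i$; hence $g(\pi)$ has the same left-to-right minima and the same block decomposition (as a decomposition into sets) as $\pi$, and $g$ is a bijection of $S_n$. Because $f$ carries occurrences of $p_1$ in a block to occurrences of $p_2$ and conversely, the dictionary shows that $g$ interchanges the numbers of occurrences of $q_1$ and $q_2$ in $\pi$. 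Thus $q_1\sim_{jd}q_2$ for $X^{(2)}_{13}$, and $Y^{(2)}_{13}$ follows immediately by the inverse operation, as explained in the introduction.

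The main obstacle is the dictionary in the first step. One has to verify that an occurrence of $q_1$ or $q_2$ cannot straddle two blocks -- this is where the shading below the points of the pattern ($\boks{0}{0},\boks{0}{1},\boks{0}{2}$ under the first point and $\boks{3}{0}$ under the last point) is used, corresponding to the staircase shading in Figure~\ref{fig-secon} -- and, conversely, that once $a,b$ lie in the same block $A_i$, the remaining mesh conditions of $q_1$ (resp.\ $q_2$) that refer to entries outside $A_i$ are automatically satisfied, so that being an occurrence of $q_1$ in $\pi$ is equivalent to $ab$ being an occurrence of $p_1$ in the standalone word $A_i$. Once this equivalence is in place, the rest is routine: $g$ is manifestly a bijection and it swaps the two statistics block by block.
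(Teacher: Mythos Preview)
Your block decomposition is wrong, and this makes the whole argument collapse. You write $\pi=x_1A_1x_2A_2\cdots x_tA_t$ with the $A_i$ as \emph{contiguous position-blocks}; but the $A_i$ in Figure~\ref{fig-secon} are defined by \emph{value ranges} (elements with values in $(x_i,x_{i-1})$), and elements of different $A_i$ are typically interspersed in position. Your dictionary then fails in both directions. The shaded boxes $\boks{0}{0},\boks{0}{1},\boks{0}{2}$ force $a,b<x_{i-1}$ (a value condition), not that $a,b$ lie before $x_{i+1}$; and $\boks{3}{0}$ only forbids entries of value $<x_i$ to the right of $b$, so the later minima $x_{i+1},\dots,x_t$ are perfectly allowed to sit between $x_i$ and $a$ (or between $a$ and $b$). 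In particular an occurrence $x_iab$ can, and often does, straddle your blocks.

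A concrete failure: take $\pi=3\,4\,1\,6\,2\,5$. Here $x_1=3$, $x_2=1$, your $A_1=\{4\}$, your $A_2=``6\,2\,5''$. The only occurrence of either pattern is the $q_2$-occurrence $3,6,5$ (so $(q_1,q_2)(\pi)=(0,1)$), with first element $x_1$ but $6,5$ in your $A_2$, directly contradicting your dictionary. Your map gives $g(\pi)=3\,4\,1\,f(6\,2\,5)=3\,4\,1\,2\,5\,6$, which has \emph{two} $q_1$-occurrences ($3,4,5$ and $3,5,6$) and no $q_2$, so $(q_1,q_2)(g(\pi))=(2,0)\neq(1,0)$; thus $g$ does not swap the statistics. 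The paper instead applies the process of Lemma~\ref{lem-box-X_2} inside each \emph{value-based} $A_i$, and the point of Figure~\ref{fig-box-X_2} is to argue that these applications do not interfere with one another; that argument does not reduce to the position-based block picture you describe.
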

\begin{proof}
For $X^{(2)}_{13}$, note that the rightmost element in $A_{i+1}$ must be to the left of the leftmost element in $A_i$, as depicted in Figure~\ref{fig-box-X_2}. Therefore, the applications of the process described in Lemma~\ref{lem-box-X_2} in each $A_i$ do not affect each other.
\end{proof}

\begin{figure}[h]
\begin{center}  
\begin{tabular}{rr}
\begin{tikzpicture}[scale=0.7]

\tikzset{      
   grid/.style={        
      draw,        
      step=1cm,        
      gray!100,    
      thin,        
    },   
    cell/.style={      
      draw,      
      anchor=center,    
      text centered,    
     },    
    graycell/.style={   
      fill=gray!20,
      draw=none,     
      minimum width=1cm,   
      minimum height=1cm,     
      anchor=south west,            
    }    
  } 
    \fill[graycell] (0,0) rectangle (1,3);
  \fill[graycell] (1,0) rectangle (2,2);
%   \fill[graycell] (3,2) rectangle (4,4);
%   \fill[graycell] (1,2) rectangle (4,4);
%   \fill[graycell] (4,3) rectangle (5,4);
 
 \draw[grid] (0,0) grid (6,4);  
  
  \node at (4.5,2.5) {\scriptsize$A_i$};  
  \node at (3.5,1.5) {\scriptsize$A_{i+1}$};  
  \node[anchor=east] at (1,1.7) {\scriptsize{$x_i$}}; 
  \node[anchor=east] at (2.1,0.7) {\scriptsize{$x_{i+1}$}}; 
 
   \filldraw[black] (1,2) circle (2.5pt);
   \filldraw[black] (2,1) circle (2.5pt);
   \node[anchor=center, rotate=-45] at (0.5,3.5) {$\ldots$};
   \node[anchor=center, rotate=-45] at (2.5,0.5) {$\ldots$};

\end{tikzpicture} 
 \end{tabular}
\end{center}
\caption{The structure of permutations containing occurrences of patterns in the proof of Theorem~\ref{thm-box-X_2}.}\label{fig-box-X_2}
\end{figure}
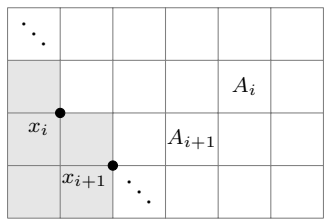

For each pair $\{X^{(2)}_i\}_{i=14}^{17}$, similarly to the proof of Theorem~\ref{thm-X^2_10}, observe that there is at most one occurrence of either pattern in a permutation. This observation allows us to use the bijection $f$ described in the proof of Theorem~\ref{thm-X^2_10} and provides the proofs of the following theorems, which are omitted.
 
\begin{thm}[Pairs $X^{(2)}_{14}$ and $Y^{(2)}_{14}$]
We have
$\pattern{scale = 0.5}{3}{1/1,2/2,3/3}{0/0,0/1,0/2,3/0,1/1,1/3,2/1,2/2,2/3,3/3}\sim_{jd}\hspace{-1mm}\pattern{scale = 0.5}{3}{1/1,2/3,3/2}{0/0,0/1,0/2,3/0,1/1,1/3,2/1,2/2,2/3,3/3}$ and $\pattern{scale = 0.5}{3}{1/1,2/2,3/3}{0/0,1/0,1/1,1/2,2/0,2/2,0/3,3/1,3/2,3/3}\sim_{jd}\hspace{-1mm}\pattern{scale = 0.5}{3}{1/1,2/3,3/2}{0/0,1/0,1/1,1/2,2/0,2/2,0/3,3/1,3/2,3/3}$.
\end{thm}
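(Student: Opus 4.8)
The plan is to establish the $X^{(2)}_{14}$ equidistribution by the method of Theorem~\ref{thm-X^2_10} and then transfer it to $Y^{(2)}_{14}$ via the inverse operation. Indeed, the shading of $Y^{(2)}_{14}$ is the reflection $R\mapsto R^i=\{(y,x):(x,y)\in R\}$ of the shading $R$ of $X^{(2)}_{14}$, and $\pi\mapsto\pi^i$ is a bijection of $S_n$ that fixes the patterns $123$ and $132$ and sends each occurrence of a pattern with shading $R$ to an occurrence of the pattern with shading $R^i$; hence $\sim_{jd}$ is preserved, and it suffices to prove $q_1\sim_{jd}q_2$ for $q_1=\pattern{scale = 0.5}{3}{1/1,2/2,3/3}{0/0,0/1,0/2,3/0,1/1,1/3,2/1,2/2,2/3,3/3}$ and $q_2=\pattern{scale = 0.5}{3}{1/1,2/3,3/2}{0/0,0/1,0/2,3/0,1/1,1/3,2/1,2/2,2/3,3/3}$, which carry the same shading.

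Next I would analyze the occurrences through the staircase decomposition of Figure~\ref{fig-secon}. Letting $x_1>\cdots>x_t$ be the left-to-right minima of $\pi$ and $A_i$ the associated blocks, inspection of the shaded cells shows that $x_iab$ is an occurrence of $q_1$ (resp., $q_2$) exactly when every element preceding $x_i$ exceeds the third element of the triple (this is forced by $\boks{0}{0}$, $\boks{0}{1}$, $\boks{0}{2}$), the third element $b$ is the largest element lying to the right of $x_i$ and no element smaller than $x_i$ follows $b$ (forced by $\boks{1}{3}$, $\boks{2}{3}$, $\boks{3}{3}$, $\boks{3}{0}$), $a$ is the least value exceeding $x_i$ that occurs strictly between $x_i$ and $b$ and no value lying between $a$ and $b$ occurs positionally between them (forced by $\boks{1}{1}$, $\boks{2}{1}$, $\boks{2}{2}$), and $a<b$ (resp., $a>b$). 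The decisive point — to be obtained just as in the proof of Theorem~\ref{thm-X^2_10}, the column-$0$ cells fixing $x_i$ and pinning the third element down globally, and $\boks{3}{0}$ then excluding a second candidate minimum — is that a permutation contains at most one occurrence of $q_1$ and at most one of $q_2$.

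Granting this, I would invoke the bijection $f$ of Theorem~\ref{thm-X^2_10}: set $f(\pi)=\pi$ for $\pi\in\mathcal{T}(n,0,0)\cup\mathcal{T}(n,1,1)$, and for $\pi\in\mathcal{T}(n,1,0)\cup\mathcal{T}(n,0,1)$ let $f(\pi)$ interchange the values of the second and third elements $a,b$ of the unique occurrence. Since $q_1$ and $q_2$ share the same shading and the two swapped positions both lie inside the occurrence, no element outside the occurrence is moved, so every shaded cell keeps its content and the occurrence of one pattern is turned into one of the other. As $f$ is an involution, the at-most-one property then forces it to restrict to a bijection $\mathcal{T}(n,k,\ell)\to\mathcal{T}(n,\ell,k)$, giving $T_{n,k,\ell}=T_{n,\ell,k}$ and hence $q_1\sim_{jd}q_2$; the companion statement for $Y^{(2)}_{14}$ follows from the inverse operation.

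The step needing the most care is verifying that the interchange introduces no new occurrence of either pattern, so that $f(\mathcal{T}(n,1,0))\subseteq\mathcal{T}(n,0,1)$ and the image genuinely retains no occurrence of $q_1$. Unlike in Theorem~\ref{thm-X^2_10}, the elements $a$ and $b$ need not be positionally adjacent here (only elements smaller than $x_i$ may separate them), so once the value of $b$ is moved into the slot of $a$ an element that previously sat harmlessly between $x_i$ and $a$, in the unshaded cell $\boks{1}{2}$, could become the middle element of a freshly created occurrence. I would settle this by a direct cell-by-cell check, watching especially the unshaded cells $\boks{3}{1}$, $\boks{3}{2}$, $\boks{1}{2}$, and — should the plain swap not suffice — would replace it with the more careful largest-value-first swapping scheme of Lemma~\ref{lem-box-X_2}.
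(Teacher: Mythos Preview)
Your approach is the paper's: argue that each of $q_1,q_2$ occurs at most once and then invoke the swap of Theorem~\ref{thm-X^2_10}. One minor correction: for $q_2=132$ the shaded cells $\boks{1}{3},\boks{2}{3},\boks{3}{3}$ force the \emph{second} positional element $a$, not the third element $b$, to be the maximum to the right of $x_i$; your list of conditions is written for $q_1$ only. The at-most-one claim survives this, however.

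The concern you flag in your last paragraph is not a detail to be tidied up --- it is a genuine gap, and the plain swap actually fails. Take $\pi=1324$: the unique $q_1$-occurrence is $1,2,4$ (positions $1,3,4$), with the value $3$ sitting harmlessly in the unshaded cell $\boks{1}{2}$; there is no $q_2$-occurrence, so $\pi\in\mathcal{T}(4,1,0)$. Swapping $a=2$ and $b=4$ produces $\pi'=1342$, which has the intended $q_2$-occurrence $1,4,2$ but \emph{also} the fresh $q_1$-occurrence $1,3,4$ (positions $1,2,3$): the element $3$ has been promoted from $\boks{1}{2}$ to the middle of an occurrence, exactly the scenario you anticipated. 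Thus $\pi'\in\mathcal{T}(4,1,1)$, not $\mathcal{T}(4,0,1)$, and $f$ is not the required bijection. (The same $\pi$ already breaks the swap for $X^{(2)}_{10}$ itself --- both $\boks{1}{1}$ and $\boks{1}{2}$ are unshaded there --- so your ``unlike in Theorem~\ref{thm-X^2_10}'' is too generous to that proof.) Your proposed fallback, the top-down swapping of Lemma~\ref{lem-box-X_2}, is tailored to a different shading and does not obviously transfer; a genuinely new ingredient is needed to close this step.
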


\begin{thm}[Pairs $X^{(2)}_{15}$ and $Y^{(2)}_{15}$]
We have $\pattern{scale = 0.5}{3}{1/1,2/2,3/3}{0/0,0/1,0/2,3/0,1/1,2/1,2/2,2/3,3/1,3/3}\sim_{jd}\hspace{-1mm}\pattern{scale = 0.5}{3}{1/1,2/3,3/2}{0/0,0/1,0/2,3/0,1/1,2/1,2/2,2/3,3/1,3/3}$ and $\pattern{scale = 0.5}{3}{1/1,2/2,3/3}{0/0,1/0,1/1,1/2,1/3,2/0,2/2,3/2,0/3,3/3}\sim_{jd}\hspace{-1mm}\pattern{scale = 0.5}{3}{1/1,2/3,3/2}{0/0,1/0,1/1,1/2,1/3,2/0,2/2,3/2,0/3,3/3}$.
\end{thm}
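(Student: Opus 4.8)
The plan is to establish the joint equidistribution only for the pair $X^{(2)}_{15}$, since the statement for $Y^{(2)}_{15}$ then follows at once: type $Y^{(2)}$ shadings arise from type $X^{(2)}$ shadings by the inverse operation, which preserves joint equidistributions, as noted in the introduction. So fix $\pi=\pi_1\cdots\pi_n\in S_n$ and, following Section~\ref{sec-X_2 Y_2}, let $x_1>x_2>\cdots>x_t$ be its left-to-right minima, so that any occurrence of $q_1$ or $q_2$ begins at some $x_i$ and has its last two entries $a,b$ inside the block $A_i$ of Figure~\ref{fig-secon}. The first real step is to read off, from the shaded boxes of $q_1$ (resp.\ $q_2$), precisely when $x_iab$ is an occurrence: the box $(3,0)$, common to every type $X^{(2)}$ shading, forces $x_i$ to be the last left-to-right minimum, so $i=t$; the boxes in column~$2$ force the second and third entries of the occurrence to be adjacent in $\pi$; the box $(1,1)$ imposes an extremality condition on the second entry inside $A_t$ (for an occurrence of $q_1$ it must be a left-to-right minimum of $A_t$); and the boxes in columns~$0$ and~$3$ pin down the elements lying to the left of $x_t$ and to the right of the third entry. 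From this description it is immediate that the pair $(a,b)$, if it exists, is uniquely determined inside $A_t$, and, arguing exactly as in the two-case analysis in the proof of Theorem~\ref{thm-X^2_10} (the sub-cases $x_i=x_j$ and $x_i\ne x_j$), a permutation carries \emph{at most one} occurrence of $q_1$ and at most one occurrence of $q_2$.

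Granting this, I would then invoke the bijection $f$ from the proof of Theorem~\ref{thm-X^2_10} verbatim: set $f(\pi)=\pi$ if $\pi\in\mathcal{T}(n,0,0)\cup\mathcal{T}(n,1,1)$, and let $f(\pi)$ be obtained from $\pi$ by interchanging the two entries $a$ and $b$ of its unique occurrence if $\pi\in\mathcal{T}(n,1,0)\cup\mathcal{T}(n,0,1)$. Since $a$ and $b$ are adjacent in $\pi$ and lie in the same horizontal and vertical bands as far as the remaining entries are concerned, the swap converts the occurrence of $q_1$ into an occurrence of $q_2$ (the shaded region around the triple is unchanged, only the two adjacent entries being interchanged), and conversely, while the at-most-one property ensures that no further occurrence of either pattern is created or destroyed. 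Hence $f$ is an involution carrying $\mathcal{T}(n,k,\ell)$ bijectively onto $\mathcal{T}(n,\ell,k)$, which yields $q_1\sim_{jd}q_2$ for $X^{(2)}_{15}$ and, as explained above, the companion statement for $Y^{(2)}_{15}$.

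I expect the characterization of occurrences to be the only genuine work here: one has to translate the specific box set of $X^{(2)}_{15}$ into clean extremality and adjacency conditions, so that uniqueness of $(a,b)$ inside $A_t$ is transparent and the ``different left-to-right minimum'' case is genuinely contradictory for this configuration rather than merely superficially resembling that of Theorem~\ref{thm-X^2_10}. Once the characterization is in hand, the well-definedness of $f$, its involution property, and the resulting bijection between $\mathcal{T}(n,k,\ell)$ and $\mathcal{T}(n,\ell,k)$ are all routine.
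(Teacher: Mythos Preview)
Your overall strategy is exactly the paper's: just before this theorem the paper states that for each of $X^{(2)}_{14}$--$X^{(2)}_{17}$, ``similarly to the proof of Theorem~\ref{thm-X^2_10}, \ldots\ there is at most one occurrence of either pattern in a permutation,'' and then invokes the swap bijection $f$ from Theorem~\ref{thm-X^2_10}. No further details are given.

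That said, two of your specific claims about the shading are wrong and would derail the characterization you flag as ``the only genuine work.'' First, the box $(3,0)$ does \emph{not} force $i=t$: it only forbids elements smaller than $x_i$ to the right of the \emph{third} entry, so later left-to-right minima may well sit between $x_i$ and $b$ (indeed the box $(2,0)$ is unshaded). Second, for the same reason the boxes $(2,1),(2,2),(2,3)$ do not make $a$ and $b$ adjacent in $\pi$; they only forbid elements with value above $x_i$ between them, so $a$ and $b$ are adjacent \emph{among elements larger than $x_i$}, precisely as in Theorem~\ref{thm-X^2_10}. The clean way to get uniqueness is to note that $(3,0),(3,1),(3,3)$ force every entry to the right of $b$ to have value strictly between $a$ and $b$. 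Then for two occurrences $x_iab$ and $x_ia'b'$ with the same minimum and, say, $b'$ to the left of $b$, one finds $a'\in(x_i,a)$ lying between $x_i$ and $a$, violating $(1,1)$; while for $x_j>x_i$ with $j>i$, the elements $a',b'\in A_j$ have value below $x_i$, hence lie to the left of $b$ by $(3,0)$, and then $b>b'$ sitting to the right of $b'$ violates $(3,3)$ for the $x_j$-occurrence. With these corrections your plan goes through and coincides with the paper's (omitted) argument.
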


\begin{thm}[Pairs $X^{(2)}_{16}$ and $Y^{(2)}_{16}$]
We have $\pattern{scale = 0.5}{3}{1/1,2/2,3/3}{0/0,0/1,0/2,3/0,1/1,1/2,1/3,2/2,3/2,3/3}\sim_{jd}\hspace{-1mm}\pattern{scale = 0.5}{3}{1/1,2/3,3/2}{0/0,0/1,0/2,3/0,1/1,1/2,1/3,2/2,3/2,3/3}$ and $\pattern{scale = 0.5}{3}{1/1,2/2,3/3}{0/0,1/0,1/1,2/1,2/0,2/2,2/3,0/3,3/1,3/3}\sim_{jd}\hspace{-1mm}\pattern{scale = 0.5}{3}{1/1,2/3,3/2}{0/0,1/0,1/1,2/1,2/0,2/2,2/3,0/3,3/1,3/3}$.
\end{thm}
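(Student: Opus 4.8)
Since $Y^{(2)}_{16}$ is obtained from $X^{(2)}_{16}$ by the inverse operation, it is enough to prove the first joint equidistribution; write $q_1$ and $q_2$ for the $X^{(2)}_{16}$ patterns. The plan is to imitate the proof of Theorem~\ref{thm-X^2_10}. First I would decode the ten shaded boxes. As for every type $X^{(2)}$ pattern, an occurrence of $q_1$ (resp.\ $q_2$) starts with a left‑to‑right minimum $x_i$ of $\pi$ and is contained in $\{x_i\}\cup A_i$. The boxes $(1,1),(1,2),(1,3)$ force the second entry $a$ of the occurrence to be the leftmost element of $A_i$; the base box $(3,0)$ together with $(2,2),(3,2),(3,3)$ then pin down the third entry $b$ (for $q_1$: the last element whose value exceeds that of $a$; with the analogous requirement for $q_2$), forbid an element whose value lies strictly between those of $a$ and $b$ from lying between them, and confine every element to the right of $b$ to have value between those of $x_i$ and $a$. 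In particular each block $A_i$ carries at most one occurrence of each pattern.

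Next I would upgrade this to global uniqueness: a permutation has at most one occurrence of $q_1$ and at most one of $q_2$. If two $q_1$-occurrences share their first entry they coincide by the previous paragraph. If their first entries $x_i>x_j$ differ, then $x_j$ is a later, smaller left‑to‑right minimum; the base box $(3,0)$ of the $x_i$-occurrence forbids the large entry $b'$ of the $x_j$-occurrence from lying to the right of the large entry $b$ of the $x_i$-occurrence, while the boxes $(0,0),(0,1),(0,2)$ of the $x_j$-occurrence give $b'<x_i<b$; consequently $b$ lies to the right of and above $b'$, i.e.\ in the shaded box $(3,3)$ of the $x_j$-occurrence, a contradiction. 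The argument for $q_2$ is the same, playing the boxes $(3,3)$ and $(3,0)$ against each other for the "mid" entries. Hence $\T(n,k,\ell)=\emptyset$ unless $k,\ell\in\{0,1\}$, and $T_{n,0,0}$, $T_{n,1,1}$ are symmetric automatically.

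It then remains to biject $\T(n,1,0)$ with $\T(n,0,1)$. Following Theorem~\ref{thm-X^2_10}, one is tempted to take $f$ to be the identity on $\T(n,0,0)\cup\T(n,1,1)$ and, on $\T(n,1,0)\cup\T(n,0,1)$, to transpose the two non‑$x_i$ entries of the unique occurrence; since $q_1$ and $q_2$ carry the \emph{same} shading set $R$, this transposition leaves the forbidden regions of the plane in place and converts the increasing pattern into $132$ (and conversely), while fixing every left‑to‑right minimum and every other block. By the uniqueness just established, applied to $f(\pi)$, the produced occurrence of the opposite pattern would be the only one there, so $f$ would map $\T(n,k,\ell)$ to $\T(n,\ell,k)$ and be an involution.

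The sticking point — and the place where this case is genuinely harder than Theorem~\ref{thm-X^2_10} — is the final clause: one must check that the transposition creates \emph{no} new occurrence of either pattern. In Theorem~\ref{thm-X^2_10} the large entry of an occurrence is the global maximum to the right of $x_i$, which makes the local picture rigid; here the box $(2,3)$ is unshaded, so an element of value larger than $b$ may sit strictly between $a$ and $b$, and after the naive swap such an element can play the role of the large entry of a fresh occurrence of the other pattern (this already occurs for $n=4$). I therefore expect the correct map to transpose $a$ not with $b$ but with a suitable neighbouring entry of $A_i$, and I would complete the proof by a short case analysis on the boxes $(2,2),(3,2),(3,3)$ of any putative second occurrence, using the confinement of the right tail from the first paragraph to exclude it; once this is settled, the joint equidistribution for $X^{(2)}_{16}$, and hence for $Y^{(2)}_{16}$, follows.
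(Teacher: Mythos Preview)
Your analysis goes further than the paper's. The paper's ``proof'' of this theorem is only the sentence preceding the block of four omitted-proof theorems: observe that each of $q_1,q_2$ occurs at most once, then apply the swap $f$ of Theorem~\ref{thm-X^2_10} (exchange the two non-minimum entries of the unique occurrence). You are right that this swap does \emph{not} give a bijection $\T(n,1,0)\to\T(n,0,1)$ for $X^{(2)}_{16}$. Concretely, $\pi=1243\in\T(4,1,0)$ has unique $q_1$-occurrence $1,2,3$ (at positions $1,2,4$); swapping $2\leftrightarrow 3$ yields $1342$, which lies in $\T(4,1,1)$, carrying both the intended $q_2$-occurrence $1,3,2$ and a fresh $q_1$-occurrence $1,3,4$. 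Symmetrically $1432\in\T(4,0,1)$ also maps to $1342$, so the swap is not even injective on $\T(4,1,0)\cup\T(4,0,1)$. You have therefore located a genuine gap in the paper's argument for this pair. (One small correction: the spurious occurrence created by the swap is of the \emph{same} pattern $q_1$, not of ``the other pattern''.)

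That said, your proposal does not close the gap either. You defer both the definition of the map (``a suitable neighbouring entry of $A_i$'') and its verification (``a short case analysis'') to an unspecified later step, and the obvious one-swap candidates already fail on $S_4$: swapping $a$ with its right neighbour sends $1324\mapsto 1234\in\T(4,0,0)$; complementing within $A_i$ sends $1324\mapsto 1342\in\T(4,1,1)$. A complete proof will require either an explicit involution together with a verification that it carries $\T(n,1,0)$ to $\T(n,0,1)$, or a different route altogether (e.g.\ a recursion in the spirit of Subsection~\ref{subsec-X_4-2}). As written, both the paper's proof and your proposal stop at the same crucial step; you have the merit of having noticed the obstruction.
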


\begin{thm}[Pairs $X^{(2)}_{17}$ and $Y^{(2)}_{17}$]
We have $\pattern{scale = 0.5}{3}{1/1,2/2,3/3}{0/0,0/1,0/2,3/0,1/1,1/2,2/2,3/1,3/2,3/3}\sim_{jd}\hspace{-1mm}\pattern{scale = 0.5}{3}{1/1,2/3,3/2}{0/0,0/1,0/2,3/0,1/1,1/2,2/2,3/1,3/2,3/3}$ and $\pattern{scale = 0.5}{3}{1/1,2/2,3/3}{0/0,1/0,1/1,1/3,2/0,2/1,2/2,2/3,0/3,3/3}\sim_{jd}\hspace{-1mm}\pattern{scale = 0.5}{3}{1/1,2/3,3/2}{0/0,1/0,1/1,1/3,2/0,2/1,2/2,2/3,0/3,3/3}$. 
\end{thm}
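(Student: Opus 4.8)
The plan is to reproduce, for this pair of shadings, the template already used for Theorem~\ref{thm-X^2_10}. Fix $\pi=\pi_1\cdots\pi_n\in S_n$ with left-to-right minima $x_1>\cdots>x_t$; by the discussion around Figure~\ref{fig-secon}, every occurrence of $q_1$ or $q_2$ lies in some $\{x_i\}\cup A_i$. The first step is to read off from the ten shaded boxes exactly what an occurrence must look like. The fully shaded right-hand column $(3,0),(3,1),(3,2),(3,3)$ forces the top element of any occurrence to be $\pi_n$, which in turn forces $i=t$; the boxes $(0,0),(0,1),(0,2)$ then force every entry to the left of $x_t$ to exceed $\pi_n$, so $x_t$ is the global minimum of $\pi$. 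The interior boxes $(1,1),(1,2),(2,2)$ pin down the middle element $a$ of the occurrence uniquely: for $q_1$ it is the first entry after $x_t$ that is smaller than $\pi_n$, subject to a vacancy condition coming from box $(2,2)$, and for $q_2$ the symmetric statement holds with $132$ in place of $123$.

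The second, and main, step is the observation that each of $q_1$ and $q_2$ occurs \emph{at most once} in $\pi$. Indeed, if $x_t\,a\,\pi_n$ and $x_t\,a'\,\pi_n$ were two distinct occurrences of $q_1$ with $\mathrm{pos}(a)<\mathrm{pos}(a')$, then $a'$ would sit strictly between $\mathrm{pos}(a)$ and $n$ with $a<a'<\pi_n$, which is forbidden by the shaded box $(2,2)$ of the first occurrence; the identical argument rules out two occurrences of $q_2$. This is precisely the analogue of the ``claim'' established inside the proof of Theorem~\ref{thm-X^2_10}.

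With uniqueness in hand, define $f$ exactly as in the proof of Theorem~\ref{thm-X^2_10}: $f$ is the identity on $\mathcal T(n,0,0)\cup\mathcal T(n,1,1)$, and on $\mathcal T(n,1,0)\cup\mathcal T(n,0,1)$ it swaps the two non-$x_t$ entries of the unique occurrence. A routine check of the finitely many shaded boxes, using the characterization above, shows that this swap turns the unique occurrence of $123$ into one of $132$ and vice versa, and creates no further occurrence of either pattern, since the only permutation points that move are the occurrence points themselves and the value ranges of the relevant boxes transfer correctly. Hence $f$ is an involution carrying $\mathcal T(n,k,\ell)$ to $\mathcal T(n,\ell,k)$, which gives $q_1\sim_{jd}q_2$ for the pair $X^{(2)}_{17}$. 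Finally, the pair $Y^{(2)}_{17}$ is obtained from $X^{(2)}_{17}$ by the inverse operation, as explained in the introduction, so its joint equidistribution follows at once.

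The step I expect to be the real work is the second one: one must verify, box by box, that the interior shadings of these specific $q_1$ and $q_2$ constrain the middle element of an occurrence tightly enough that two occurrences of the same pattern cannot coexist, and simultaneously that after a swap no spurious occurrence is introduced elsewhere in $\pi$. This is a finite case analysis on the positions of entries relative to $x_t$, $a$, and $\pi_n$, entirely parallel to the verification that the excerpt omits after Theorem~\ref{thm-X^2_10}'s statement.
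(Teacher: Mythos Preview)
Your approach is the same as the paper's: show that each of $q_1,q_2$ occurs at most once and then apply the swap map $f$ from Theorem~\ref{thm-X^2_10}. However, two steps in your argument are not right as written.

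First, the assertion that the fully shaded right column ``in turn forces $i=t$'' is unjustified and in general false. The column-$3$ shading only forces the third element of any occurrence to be $\pi_n$; it says nothing about which left-to-right minimum begins the occurrence. What is true (and what you actually need) is that the column-$0$ shadings $(0,0),(0,1),(0,2)$ force any two occurrences of $q_1$ (respectively $q_2$) to begin with the \emph{same} $x_i$: if $x_i\,a\,\pi_n$ and $x_j\,a'\,\pi_n$ were both occurrences of $q_1$ with $i<j$, then $x_i$ lies to the left of $x_j$ with $x_i<\pi_n$, contradicting the requirement that everything left of $x_j$ exceed $\pi_n$. Your subsequent statement that ``$x_t$ is the global minimum of $\pi$'' is a tautology and plays no role.

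Second, your uniqueness argument for $q_1$ is incomplete. Assuming $\mathrm{pos}(a)<\mathrm{pos}(a')$ you invoke box $(2,2)$ of the first occurrence, but that box only forbids $a'$ when $a<a'<\pi_n$; you have not ruled out $a'<a$. In that case you should instead look at the second occurrence $x_i\,a'\,\pi_n$: the element $a$ lies strictly between $x_i$ and $a'$ in position with value in $(a',\pi_n)$, so it violates box $(1,2)$ there. With both cases handled, uniqueness follows; the analogous split is needed for $q_2$. Once these gaps are filled, the remainder of your outline (the involution $f$ and the reduction of $Y^{(2)}_{17}$ via the inverse) matches the paper's treatment.
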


\begin{remark}
Note that the proofs of the joint equidistribution results of type $X^{(2)}$ in this section are independent of whether  the area $(3,0)$ is shaded or not. This implies that the joint equistribution also holds for the type $\raisebox{0.5ex}{\begin{tikzpicture}[scale = 0.2, baseline=(current bounding box.center)]
	\foreach \x/\y in {0/0,0/1,0/2}		
		\fill[gray!50] (\x,\y) rectangle +(1,1); 	
        \draw (0,1)--(4,1);
        \draw (1,0)--(1,4);
        \draw (0,2)--(1,2);
         \draw (0,3)--(1,3);
         \draw (2,0)--(2,1);
         \draw (3,0)--(3,1);
 			\node  at (2.6,2.6) {$p$};
 			\filldraw (1,1) circle (4pt) node[above left] {};
 			\end{tikzpicture}}$,
with $p$ replaced by the mesh patterns of $p$ that are associated with $X^{(2)}$, and therefore also holds for the type $\raisebox{0.5ex}{\begin{tikzpicture}[scale = 0.2, baseline=(current bounding box.center)]
	\foreach \x/\y in {0/0,1/0,2/0}		
		\fill[gray!50] (\x,\y) rectangle +(1,1); 	
        \draw (0,1)--(4,1);
        \draw (1,0)--(1,4);
        \draw (0,2)--(1,2);
         \draw (0,3)--(1,3);
         \draw (2,0)--(2,1);
         \draw (3,0)--(3,1);
 			\node  at (2.6,2.6) {$p$};
 			\filldraw (1,1) circle (4pt) node[above left] {};
 			\end{tikzpicture}}$,
 with $p$ replaced by the mesh patterns $p$ associated with $Y^{(2)}$. We state these joint equidistribution results in Table~\ref{tab-extend}.
\end{remark}

\section{Mesh patterns of types $X^{(3)}$ and $Y^{(3)}$}\label{sec-X_3 and Y_3}
In this section, we consider the patterns with types $X^{(3)}=\raisebox{0.5ex}{\begin{tikzpicture}[scale = 0.2, baseline=(current bounding box.center)]
	\foreach \x/\y in {0/0,0/1,0/3,2/0}		
		\fill[gray!50] (\x,\y) rectangle +(1,1); 	
        \draw (0,1)--(4,1);
        \draw (1,0)--(1,4);
        \draw (0,2)--(1,2);
         \draw (0,3)--(1,3);
         \draw (2,0)--(2,1);
         \draw (3,0)--(3,1);
 			\node  at (2.6,2.6) {$p$};
 			\filldraw (1,1) circle (4pt) node[above left] {};
 			\end{tikzpicture}}$
and $Y^{(3)}=\raisebox{0.5ex}{\begin{tikzpicture}[scale = 0.2, baseline=(current bounding box.center)]
	\foreach \x/\y in {0/0,1/0,3/0,0/2}		
		\fill[gray!50] (\x,\y) rectangle +(1,1); 	
        \draw (0,1)--(4,1);
        \draw (1,0)--(1,4);
        \draw (0,2)--(1,2);
         \draw (0,3)--(1,3);
         \draw (2,0)--(2,1);
         \draw (3,0)--(3,1);
 			\node  at (2.6,2.6) {$p$};
 			\filldraw (1,1) circle (4pt) node[above left] {};
 			\end{tikzpicture}}$.
 Let $\pi=\pi_1\cdots \pi_n\in S_n$, and $x_1>x_2\cdots >x_s$ be the sequence of left-to-right minima in $\pi$.
Referring to Figure~\ref{fig-X_3 and Y_3}, each occurrence of any pattern of type $X^{(3)}$ in $\pi$ begins with an $x_i$, where $1\leq i\leq s$. Moreover, for each occurrence of $p_1$ (resp., $p_2$) in $\pi$, the first element must be in $A_{ij}$ (resp., $A_{1j}\cup B_{1j}\cup B_{2 j}\cup \cdots \cup B_{jj}$) because of the shaded area $(0,1)$ (resp., $(0,3)$) and the second element must be in $A_{1j}\cup B_{1j}\cup B_{2 j}\cup \cdots \cup B_{jj}$ (resp., $A_{ij}$) because of the shaded areas $(0,3)$ (resp., $(0,1)$) and $(2,0)$, where $1\leq i\leq s$ and $i\leq j\leq s$. That is, occurrences of the patterns must be entirely contained within $\{x_i\}\cup A_{ij}\cup A_{1j}$ or $\{x_i\}\cup A_{ij}\cup B_{mj}$ for $1\leq m\leq j$.  
 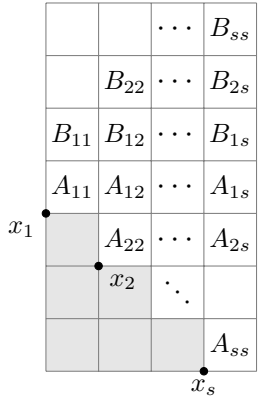
\begin{figure}
\begin{center}
\begin{tikzpicture}[scale=0.7]    
\tikzset{      
   grid/.style={        
      draw,        
      step=1cm,        
      gray!100,    
      thin,        
    },   
    cell/.style={      
      draw,      
      anchor=center,    
      text centered,    
     },    
    graycell/.style={   
      fill=gray!20,
      draw=none,     
      minimum width=1cm,   
      minimum height=1cm,     
      anchor=south west,            
    }    
  } 
      
  \fill[graycell] (0,0) rectangle (1,3);
  \fill[graycell] (1,0) rectangle (2,2);
  \fill[graycell] (2,0) rectangle (3,1);
  
     \draw[grid] (0,0) grid (4,7);
     \node at (0.5,3.5) {\footnotesize{$A_{11}$}};
     \node at (0.5,4.5) {\footnotesize{$B_{11}$}};
     \node at (1.5,3.5) {\footnotesize{$A_{12}$}};
     \node at (1.5,4.5) {\footnotesize{$B_{12}$}};
     \node at (3.5,3.5) {\footnotesize{$A_{1s}$}};
     \node at (3.5,4.5) {\footnotesize{$B_{1s}$}};

     \node at (1.5,2.5) {\footnotesize{$A_{22}$}};
     \node at (1.5,5.5) {\footnotesize{$B_{22}$}};
     \node at (3.5,2.5) {\footnotesize{$A_{2s}$}};
     \node at (3.5,5.5) {\footnotesize{$B_{2s}$}};
     
     \node at (3.5,0.5) {\footnotesize{$A_{ss}$}};
     \node at (3.5,6.5) {\footnotesize{$B_{ss}$}};
  \node[anchor=east] at (0,2.7) {\footnotesize{$x_1$}};
  \node[anchor=west] at (1,1.7) {\footnotesize{$x_2$}};
  \node[anchor=center] at (3,-0.3) {\footnotesize{$x_s$}};
    \filldraw (0,3) circle (2pt);
    \filldraw (1,2) circle (2pt);
    \filldraw (3,0) circle (2pt);
     \node[anchor=center, rotate=-45] at (2.5,1.5) {$\ldots$};
     \node[anchor=center, rotate=0] at (2.5,3.5) {$\cdots$};
     \node[anchor=center, rotate=0] at (2.5,2.5) {$\cdots$};
     \node[anchor=center, rotate=0] at (2.5,4.5) {$\cdots$};
     \node[anchor=center, rotate=0] at (2.5,5.5) {$\cdots$};
     \node[anchor=center, rotate=0] at (2.5,6.5) {$\cdots$};
\end{tikzpicture}
\end{center}

\vspace{-0.8cm}

\caption{The structure of permutations containing occurrences of pairs in Section~\ref{sec-X_3 and Y_3}.}\label{fig-X_3 and Y_3}
\end{figure}

\subsection{Cases proved via reversal}\label{subsec-X_3-1}
In this subsection, we consider the following 16 pairs presented in Table~\ref{tab-X_3-1}. Each shading of the pattern $p$ associated with $\{X^{(3)}_i\}_{i=1}^{8}$ exhibits vertical symmetry.
 Referring to Figure~\ref{fig-X_3 and Y_3}, let $C_j=A_{1j}\cup A_{2j}\cup\cdots\cup A_{jj}\cup B_{1j}\cup B_{2j}\cup\cdots \cup B_{jj}$, where $j=\{1,2,\ldots,s\}$. We establish the bijection $f$ by applying the reverse operation to $C_i$'s, which is a bijection that maps $\T(n,k,\ell)$ to $\T(n,\ell,k)$.
 
\begin{table}[!ht]
 	{
 		\renewcommand{\arraystretch}{1.7}
 		\setlength{\tabcolsep}{5pt}
 \begin{center} 
 		\begin{tabular}{c | c || c | c || c | c || c | c}
 			\hline
 		\footnotesize{Nr.}	& \footnotesize {Patterns} & \footnotesize{Nr.}	& \footnotesize{Patterns} & \footnotesize {Nr.}  & \footnotesize{Patterns} &\footnotesize {Nr.}  & \footnotesize{Patterns}\\
 			\hline
 			\hline
$X^{(3)}_{1}$ &  $\pattern{scale = 0.5}{3}{1/1,2/2,3/3}{0/0,0/1,0/3,2/0,1/1,2/1,2/2,2/3,3/1}\pattern{scale = 0.5}{3}{1/1,2/3,3/2}{0/0,0/1,0/3,2/0,1/1,2/1,2/2,2/3,3/1}$ &
$X^{(3)}_2$ & $\pattern{scale = 0.5}{3}{1/1,2/2,3/3}{0/0,0/1,0/3,2/0,1/3,2/1,2/2,2/3,3/3}\pattern{scale = 0.5}{3}{1/1,2/3,3/2}{0/0,0/1,0/3,2/0,1/3,2/1,2/2,2/3,3/3}$& 
$X^{(3)}_3$ & $\pattern{scale = 0.5}{3}{1/1,2/2,3/3}{0/0,0/1,0/3,2/0,1/1,1/2,2/2,3/1,3/2}\pattern{scale = 0.5}{3}{1/1,2/3,3/2}{0/0,0/1,0/3,2/0,1/1,1/2,2/2,3/1,3/2}$& 
$X^{(3)}_4$ & $\pattern{scale = 0.5}{3}{1/1,2/2,3/3}{0/0,0/1,0/3,2/0,1/2,1/3,2/2,3/2,3/3}\pattern{scale = 0.5}{3}{1/1,2/3,3/2}{0/0,0/1,0/3,2/0,1/2,1/3,2/2,3/2,3/3}$ 
\\[5pt]
$X^{(3)}_5$ & $\pattern{scale = 0.5}{3}{1/1,2/2,3/3}{0/0,0/1,1/1,1/2,0/3,2/0,3/1,3/2}\pattern{scale = 0.5}{3}{1/1,2/3,3/2}{0/0,0/1,1/1,1/2,0/3,2/0,3/1,3/2}$ &
$X^{(3)}_{6}$ &  $\pattern{scale = 0.5}{3}{1/1,2/2,3/3}{0/0,0/1,1/2,1/3,0/3,2/0,3/2,3/3}\pattern{scale = 0.5}{3}{1/1,2/3,3/2}{0/0,0/1,1/2,1/3,0/3,2/0,3/2,3/3}$ &
$X^{(3)}_{7}$ & $\pattern{scale = 0.5}{3}{1/1,2/2,3/3}{0/0,0/1,1/1,2/1,0/3,2/0,3/1,2/3}\pattern{scale = 0.5}{3}{1/1,2/3,3/2}{0/0,0/1,1/1,2/1,0/3,2/0,3/1,2/3}$ &
$X^{(3)}_{8}$ & $\pattern{scale = 0.5}{3}{1/1,2/2,3/3}{0/0,0/1,1/3,0/3,2/0,2/1,2/3,3/3}\pattern{scale = 0.5}{3}{1/1,2/3,3/2}{0/0,0/1,1/3,0/3,2/0,2/1,2/3,3/3}$
\\[5pt]
\hline
$Y^{(3)}_1$ &  $\pattern{scale = 0.5}{3}{1/1,2/2,3/3}{0/0,1/0,1/1,1/2,1/3,3/0,2/2,0/2,3/2}\pattern{scale = 0.5}{3}{1/1,2/3,3/2}{0/0,1/0,1/1,1/2,1/3,3/0,0/2,2/2,3/2}$ &
$Y^{(3)}_2$ &  $\pattern{scale = 0.5}{3}{1/1,2/2,3/3}{0/0,1/0,1/2,2/2,3/0,0/2,3/3,3/1,3/2}\pattern{scale = 0.5}{3}{1/1,2/3,3/2}{0/0,1/0,1/2,3/0,0/2,2/2,3/3,3/1,3/2}$&
$Y^{(3)}_3$ & $\pattern{scale = 0.5}{3}{1/1,2/2,3/3}{0/0,1/0,1/1,1/3,3/0,0/2,2/1,2/2,2/3}\pattern{scale = 0.5}{3}{1/1,2/3,3/2}{0/0,1/0,1/1,1/3,2/1,2/2,2/3,3/0,0/2}$&
$Y^{(3)}_4$ & $\pattern{scale = 0.5}{3}{1/1,2/2,3/3}{0/0,1/0,2/1,2/2,2/3,3/0,0/2,3/1,3/3}\pattern{scale = 0.5}{3}{1/1,2/3,3/2}{0/0,1/0,2/1,2/2,2/3,3/0,0/2,3/1,3/3}$
\\[5pt]
$Y^{(3)}_5$ & $\pattern{scale = 0.5}{3}{1/1,2/2,3/3}{0/0,1/0,1/1,2/1,2/3,3/0,0/2,1/3}\pattern{scale = 0.5}{3}{1/1,2/3,3/2}{0/0,1/0,1/1,2/1,2/3,3/0,0/2,1/3}$ &
$Y^{(3)}_{6}$ & $\pattern{scale = 0.5}{3}{1/1,2/2,3/3}{0/0,1/0,3/1,2/1,2/3,3/0,0/2,3/3}\pattern{scale = 0.5}{3}{1/1,2/3,3/2}{0/0,1/0,3/1,2/1,2/3,3/0,0/2,3/3}$ &
$Y^{(3)}_{7}$ &$\pattern{scale = 0.5}{3}{1/1,2/2,3/3}{0/0,1/0,1/1,1/2,1/3,3/0,0/2,3/2}\pattern{scale = 0.5}{3}{1/1,2/3,3/2}{0/0,1/0,1/1,1/2,1/3,3/0,0/2,3/2}$& 
$Y^{(3)}_{8}$ &  $\pattern{scale = 0.5}{3}{1/1,2/2,3/3}{0/0,1/0,1/2,3/1,3/2,3/0,0/2,3/3}\pattern{scale = 0.5}{3}{1/1,2/3,3/2}{0/0,1/0,1/2,3/1,3/2,3/0,0/2,3/3}$ \\[5pt]
\hline
	\end{tabular}
\end{center} 
}
\vspace{-0.5cm}
 	\caption{Jointly equidistributed patterns of types $X^{(3)}$ and $Y^{(3)}$ via reversal.}\label{tab-X_3-1}
\end{table} 

\subsection{Cases proved via swapping elements}\label{subsec-X_3-2}
In this subsection, we consider the following two pairs shown in Table~\ref{tab-X_3-2}.

\begin{table}[!ht]
 	{
 		\renewcommand{\arraystretch}{1.7}
 		\setlength{\tabcolsep}{5pt}
 \begin{center} 
 		\begin{tabular}{c | c || c | c }
 			\hline
 		\footnotesize{Nr.}	& \footnotesize {Patterns} & \footnotesize{Nr.}	& \footnotesize{Patterns} \\
 			\hline
 			\hline
$X^{(3)}_{9}$ &  $\pattern{scale = 0.5}{3}{1/1,2/2,3/3}{0/0,0/1,0/3,2/0,1/1,1/3,2/1,2/2,2/3,3/3}\pattern{scale = 0.5}{3}{1/1,2/3,3/2}{0/0,0/1,0/3,2/0,1/1,1/3,2/1,2/2,2/3,3/3}$ &
$Y^{(3)}_9$ & $\pattern{scale = 0.5}{3}{1/1,2/2,3/3}{0/0,1/0,3/0,0/2,1/1,1/2,3/1,2/2,3/2,3/3}\pattern{scale = 0.5}{3}{1/1,2/3,3/2}{0/0,1/0,3/0,0/2,1/1,1/2,3/1,2/2,3/2,3/3}$
\\[5pt]
\hline
	\end{tabular}
\end{center} 
}
\vspace{-0.5cm}
 	\caption{Jointly equidistributed patterns of types $X^{(3)}$ and $Y^{(3)}$ via swapping elements.}\label{tab-X_3-2}
\end{table}

\begin{thm}[Pairs $X^{(3)}_{9}$ and $Y^{(3)}_{9}$]\label{thm-Ding-X_3}
    We have $\pattern{scale = 0.5}{3}{1/1,2/2,3/3}{0/0,0/1,0/3,2/0,1/1,1/3,2/1,2/2,2/3,3/3}\sim_{jd}\hspace{-1mm}\pattern{scale = 0.5}{3}{1/1,2/3,3/2}{0/0,0/1,0/3,2/0,1/1,1/3,2/1,2/2,2/3,3/3}$ and $\pattern{scale = 0.5}{3}{1/1,2/2,3/3}{0/0,1/0,3/0,0/2,1/1,1/2,3/1,2/2,3/2,3/3}\sim_{jd}\hspace{-1mm}\pattern{scale = 0.5}{3}{1/1,2/3,3/2}{0/0,1/0,3/0,0/2,1/1,1/2,3/1,2/2,3/2,3/3}$.
\end{thm}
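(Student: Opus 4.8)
The plan is to follow the template of Theorems~\ref{thm-X^2_9} and~\ref{thm-X^2_10}, exploiting the block decomposition of a permutation by its left-to-right minima described at the start of Section~\ref{sec-X_3 and Y_3} and drawn in Figure~\ref{fig-X_3 and Y_3}. The first step is to pin down the shape of an occurrence in $\pi=\pi_1\cdots\pi_n\in S_n$. Writing $m$ for the position of $n$, the shaded boxes in the top row of the picture of $q_1$ (resp. $q_2$) force the third (resp. second) entry of any occurrence to equal $n$, those in the column immediately to the right of the middle dot force the occurrence to be of the form $x_i\,\pi_{m-1}\,n$ (resp. $x_i\,n\,\pi_{m+1}$) with $x_i$ a left-to-right minimum of $\pi$, and the boxes $(0,0),(0,1),(1,1)$ then say that $x_i$ lies below $\pi_{m-1}$ (resp. below $\pi_{m+1}$) and that no entry of value strictly between them occurs weakly to the left of $n$. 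From this $x_i$ is uniquely determined when it exists, so each of $q_1,q_2$ occurs at most once and $S_n=\mathcal T(n,0,0)\sqcup\mathcal T(n,1,0)\sqcup\mathcal T(n,0,1)\sqcup\mathcal T(n,1,1)$.

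Next I would define $f$ to be the identity on $\mathcal T(n,0,0)\cup\mathcal T(n,1,1)$, and on $\mathcal T(n,1,0)$ to be the map obtained by repeatedly transposing $n$ with the entry immediately to its left for as long as the result still contains an occurrence of $q_1$; on $\mathcal T(n,0,1)$ one runs the mirror-image procedure, repeatedly transposing $n$ with the entry immediately to its right while an occurrence of $q_2$ persists. A single such transposition carries a $123$-shaped occurrence to a $132$-shaped one built on the same three values and, apart from relocating $n$, leaves the left-to-right minima of $\pi$ and their positions in the blocks of Figure~\ref{fig-X_3 and Y_3} untouched; hence the procedure halts at a permutation in which $q_1$ no longer occurs while $q_2$ does, the successor of $n$ being the last entry past which $n$ was pushed, a left-to-right minimum witnessing a $q_2$-occurrence. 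Thus $f$ exchanges $\mathcal T(n,1,0)$ and $\mathcal T(n,0,1)$, the two halves of the definition are mutually inverse, and $q_1\sim_{jd}q_2$ follows. The companion pair $Y^{(3)}_9$ then requires no extra work, since type $Y^{(3)}$ shading is the image of type $X^{(3)}$ shading under the inverse operation, which preserves joint distributions.

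The step I expect to be the main obstacle is the verification that this bubbling procedure is genuinely a bijection $\mathcal T(n,1,0)\leftrightarrow\mathcal T(n,0,1)$: after each transposition one must check that the only candidate occurrence of $q_1$ remains the one anchored at the current predecessor of $n$ (so that ``$q_1$ still occurs'' is an honest stopping rule), and that when it finally fails an occurrence of $q_2$ is present while $q_1$ does not silently reappear one step further to the left. This is exactly where the confinement of occurrences to a single block $\{x_i\}\cup A_{ij}\cup A_{1j}$ or $\{x_i\}\cup A_{ij}\cup B_{mj}$ is needed: within such a block the entries strictly between $x_i$ and $n$ all have value either below $x_i$ or above $\pi_{m-1}$, which is what preserves the ``no intermediate value'' condition and the left-to-right-minimum structure under the swap. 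Settling this, together with the boundary cases ($n$ near an end of $\pi$, empty blocks) and the fact that the procedures run in different blocks do not interfere, is the only real content; note that a single naive transposition of $n$ with its neighbour does not suffice, since it can create a competing occurrence of $q_1$ one position to the left.
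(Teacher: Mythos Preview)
Your structural analysis (each of $q_1,q_2$ occurs at most once, any occurrence being $x_i\,\pi_{m-1}\,n$ or $x_i\,n\,\pi_{m+1}$ with $x_i$ the leftmost entry below the neighbour of $n$) matches the paper's exactly. Where you diverge is in the bijection: the paper simply invokes the map of Theorem~\ref{thm-X^2_10}, i.e.\ a \emph{single} swap of $n$ with its neighbour on $\mathcal T(n,1,0)\cup\mathcal T(n,0,1)$. Your closing remark that a single transposition can create a fresh $q_1$-occurrence one step to the left is correct and in fact exposes a gap in the paper's argument: for example $\pi=1324\in\mathcal T(4,1,0)$ (the unique $q_1$-occurrence is $1\cdot 2\cdot 4$), but the single swap produces $1342\in\mathcal T(4,1,1)$ (both $1\cdot 3\cdot 4$ and $1\cdot 4\cdot 2$ are occurrences), so the paper's $f$ is not a bijection $\mathcal T(4,1,0)\to\mathcal T(4,0,1)$. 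Your iterated ``bubbling'' of $n$ is thus a genuine repair rather than an alternative route.

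The justification of your procedure can be made cleaner than your last paragraph suggests, and does not really need the block decomposition of Figure~\ref{fig-X_3 and Y_3}. The only lemma required is: if $q_1$ occurs in $\pi$ via $x_i\,\pi_{m-1}\,n$, then after one left-swap the triple $x_i\,n\,\pi_{m-1}$ is a $q_2$-occurrence in the resulting permutation. This holds because the shading constraints $(0,0),(0,1),(1,1)$ for this $q_2$-candidate involve the \emph{same} positions (those left of position $m-1$) and the \emph{same} value interval $(x_i,\pi_{m-1})$ as for the original $q_1$-occurrence, while all other constraints are vacuous ($n$ is the global maximum and the last two entries are adjacent). Together with its mirror statement, this shows that every forward swap lands in a permutation containing $q_2$ and every backward swap in one containing $q_1$; hence the forward process halts in $\mathcal T(n,0,1)$, the backward process halts in $\mathcal T(n,1,0)$, every intermediate permutation lies in $\mathcal T(n,1,1)$, and the two processes invert one another swap by swap. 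No separate ``non-interference between blocks'' or boundary-case analysis is needed.
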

\begin{proof}
Considering the pair  $X^{(3)}_9$, let $x_i \pi_j\pi_t$ be an occurrence of $q_1$ or $q_2$ in a permutation $\pi\in S_n$, where $1\leq i\leq s$, $2\leq j,t\leq n$. We have $t=j+1$ and $\pi_t=n$ (resp., $\pi_j=n$) for $q_1$ (resp., $q_2$). Therefore, there is at most one occurrence of $p_1$ or $p_2$ in $\pi$. Furthermore, w.l.o.g., suppose $x_\ell\pi_j\pi_t$ is an occurrence of $q_1$ or $q_2$ with $i<\ell\leq s$; then $x_\ell$ is positioned between $x_i$ and $\pi_j$ because of the shaded area $(2,0)$, which contradicts our supposition due to the element $x_i$. Thus, each permutation contains at most one occurrence of either pattern in $X^{(3)}_{9}$. Therefore, the map $f$ is constructed in the same way as in Theorem~\ref{thm-X^2_10}.\end{proof}

\section{Mesh patterns of types $X^{(4)}$ and $Y^{(4)}$}\label{sec-X_4}

In this section, we consider the patterns with types $X^{(4)}=\raisebox{0.6ex}{\begin{tikzpicture}[scale = 0.2, baseline=(current bounding box.center)]
	\foreach \x/\y in {0/0,0/1,2/0,3/0}		
		\fill[gray!50] (\x,\y) rectangle +(1,1); 	
        \draw (0,1)--(4,1);
        \draw (1,0)--(1,4);
        \draw (0,2)--(1,2);
         \draw (0,3)--(1,3);
         \draw (2,0)--(2,1);
         \draw (3,0)--(3,1);
 			\node  at (2.6,2.6) {$p$};
 			\filldraw (1,1) circle (4pt) node[above left] {};
 			\end{tikzpicture}}$ and $Y^{(4)}=\raisebox{0.6ex}{\begin{tikzpicture}[scale = 0.2, baseline=(current bounding box.center)]
	\foreach \x/\y in {0/0,1/0,0/2,0/3}		
		\fill[gray!50] (\x,\y) rectangle +(1,1); 	
        \draw (0,1)--(4,1);
        \draw (1,0)--(1,4);
        \draw (0,2)--(1,2);
         \draw (0,3)--(1,3);
         \draw (2,0)--(2,1);
         \draw (3,0)--(3,1);
 			\node  at (2.6,2.6) {$p$};
 			\filldraw (1,1) circle (4pt) node[above left] {};
 			\end{tikzpicture}}$. 
For the patterns of type $X^{(4)}$, let $\pi \in S_n$, and let $x_1>x_2>\cdots>x_t$ be the sequence of left-to-right minima in $\pi$. Any occurrence of a pattern in each pair of these patterns begins with an $x_i$.
% and any occurrence of the pattern $p$ appearing in $A_i$. That is, occurrences of patterns $p_1$ or $p_2$ associated with $x_{i+1}$ must be to the left and below those associated with $x_i$, and each occurrence must be entirely within $\{x_i\}\cup A_i$ for some~$i$. 

\subsection{Cases proved via swapping elements}\label{subsec-5.1}
This subsection is about the 10 pairs presented in Table~\ref{tab-X_4-1}.
\begin{table}[!ht]
 	{
 		\renewcommand{\arraystretch}{1.7}
 		\setlength{\tabcolsep}{5pt}
 \begin{center} 
 		\begin{tabular}{c | c || c | c || c | c || c | c || c | c}
 			\hline
 		\footnotesize{Nr.}	& \footnotesize {Patterns} & \footnotesize{Nr.}	& \footnotesize{Patterns} & \footnotesize {Nr.}  & \footnotesize{Patterns} &\footnotesize {Nr.}  & \footnotesize{Patterns} &\footnotesize {Nr.}  & \footnotesize{Patterns} \\
 			\hline
 			\hline
$X^{(4)}_{1}$ &  $\pattern{scale = 0.5}{3}{1/1,2/2,3/3}{0/0,0/1,2/0,3/0,1/3,2/1,2/2,2/3,3/3}\pattern{scale = 0.5}{3}{1/1,2/3,3/2}{0/0,0/1,2/0,3/0,1/3,2/1,2/2,2/3,3/3}$&
$X^{(4)}_2$ & $\pattern{scale = 0.5}{3}{1/1,2/2,3/3}{0/0,0/1,1/2,2/2,2/0,3/0,3/3,3/1,3/2}\pattern{scale = 0.5}{3}{1/1,2/3,3/2}{0/0,0/1,1/2,2/0,3/0,2/2,3/3,3/1,3/2}$ &  
$X^{(4)}_3$ &$\pattern{scale = 0.5}{3}{1/1,2/2,3/3}{0/0,0/1,3/0,2/0,1/1,1/3,2/1,2/2,2/3,3/3}\pattern{scale = 0.5}{3}{1/1,2/3,3/2}{0/0,0/1,3/0,2/0,1/1,1/3,2/1,2/2,2/3,3/3}$&
$X^{(4)}_{4}$ &  $\pattern{scale = 0.5}{3}{1/1,2/2,3/3}{0/0,0/1,1/1,2/0,2/1,2/2,2/3,3/0,3/1,3/3}\pattern{scale = 0.5}{3}{1/1,2/3,3/2}{0/0,0/1,1/1,2/0,2/1,2/2,2/3,3/0,3/1,3/3}$ &
$X^{(4)}_{5}$ &  $\pattern{scale = 0.5}{3}{1/1,2/2,3/3}{0/0,0/1,3/0,2/0,1/1,1/2,2/2,3/1,3/2,3/3}\pattern{scale = 0.5}{3}{1/1,2/3,3/2}{0/0,0/1,3/0,2/0,1/1,1/2,2/2,3/1,3/2,3/3}$ \\[5pt]
\hline
$Y^{(4)}_1$ &  $\pattern{scale = 0.5}{3}{1/1,2/2,3/3}{0/0,0/2,0/3,1/0,1/2,2/2,3/1,3/2,3/3}\pattern{scale = 0.5}{3}{1/1,2/3,3/2}{0/0,0/2,0/3,1/0,1/2,2/2,3/1,3/2,3/3}$ & 
$Y^{(4)}_2$ &  $\pattern{scale = 0.5}{3}{1/1,2/2,3/3}{0/0,1/0,1/3,2/1,2/2,2/3,0/3,0/2,3/3}\pattern{scale = 0.5}{3}{1/1,2/3,3/2}{0/0,1/0,1/3,2/1,2/2,2/3,0/3,0/2,3/3}$&
$Y^{(4)}_3$ & $\pattern{scale = 0.5}{3}{1/1,2/2,3/3}{0/0,0/2,0/3,1/0,1/1,1/2,2/2,3/1,3/2,3/3}\pattern{scale = 0.5}{3}{1/1,2/3,3/2}{0/0,0/2,0/3,1/0,1/1,1/2,2/2,3/1,3/2,3/3}$ &
$Y^{(4)}_{4}$ &$\pattern{scale = 0.5}{3}{1/1,2/2,3/3}{0/0,0/2,0/3,1/0,1/1,1/2,1/3,2/2,3/2,3/3}\pattern{scale = 0.5}{3}{1/1,2/3,3/2}{0/0,0/2,0/3,1/0,1/1,1/2,1/3,2/2,3/2,3/3}$ &
$Y^{(4)}_{5}$ &$\pattern{scale = 0.5}{3}{1/1,2/2,3/3}{0/0,0/2,0/3,1/0,1/1,1/3,2/1,2/2,2/3,3/3}\pattern{scale = 0.5}{3}{1/1,2/3,3/2}{0/0,0/2,0/3,1/0,1/1,1/3,2/1,2/2,2/3,3/3}$
\\[5pt]
\hline
	\end{tabular}
\end{center} 
}
\vspace{-0.5cm}
 	\caption{Jointly equidistributed patterns of types $X^{(4)}$ and $Y^{(4)}$ via swapping elements.}\label{tab-X_4-1}
\end{table} 

% \begin{figure}
% \begin{center}
% \begin{tikzpicture}[scale=0.9]    
% \tikzset{      
%   grid/.style={        
%       draw,        
%       step=1cm,        
%       gray!100,    
%       thin,        
%     },   
%     cell/.style={      
%       draw,      
%       anchor=center,    
%       text centered,    
%      },    
%     graycell/.style={   
%       fill=gray!20,
%       draw=none,     
%       minimum width=1cm,   
%       minimum height=1cm,     
%       anchor=south west,            
%     }    
%   } 
      
%   \fill[graycell] (0,0) rectangle (1,1);
%      \draw[grid] (0,0) grid (2,2);

%      \node at (1.5,1.5) {\footnotesize{$A$}};
%   \node[anchor=east] at (1,0.7) {\footnotesize{$x_i$}};
%     \filldraw (1,1) circle (2pt);
%      \node[anchor=center, rotate=-45] at (1.5,0.5) {$\ldots$};
%      \node[anchor=center, rotate=-45] at (0.5,1.5) {$\ldots$};
% \end{tikzpicture}
% \end{center}
% \caption{Related to the proof in Subsection~\ref{subsec-5.1}.}\label{fig-X_4 and Y_4-1}
% \end{figure}

\begin{thm}[Pairs $X^{(4)}_1$ and $Y^{(4)}_1$]\label{thm-X^4-1}
We have $\pattern{scale = 0.5}{3}{1/1,2/2,3/3}{0/0,0/1,2/0,3/0,1/3,2/1,2/2,2/3,3/3}\sim_{jd}\hspace{-1mm}\pattern{scale = 0.5}{3}{1/1,2/3,3/2}{0/0,0/1,2/0,3/0,1/3,2/1,2/2,2/3,3/3}$ and $\pattern{scale = 0.5}{3}{1/1,2/2,3/3}{0/0,0/2,0/3,1/0,1/2,2/2,3/1,3/2,3/3}\sim_{jd}\hspace{-1mm}\pattern{scale = 0.5}{3}{1/1,2/3,3/2}{0/0,0/2,0/3,1/0,1/2,2/2,3/1,3/2,3/3}$.
\end{thm}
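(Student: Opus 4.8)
The plan is to prove the statement for the pair $X^{(4)}_1$; the statement for $Y^{(4)}_1$ then follows immediately, since the inverse operation carries a type $X^{(4)}$ shading to the corresponding type $Y^{(4)}$ shading and preserves joint distributions (both $123$ and $132$ are fixed by the group-theoretic inverse, so $q_1$ and $q_2$ are sent to their $Y^{(4)}_1$ counterparts). Unlike the type $X^{(1)}$ case, there is no reduction lemma available here, so I will argue directly with the full length-$3$ patterns. Fix $\pi=\pi_1\cdots\pi_n\in S_n$ and let $x_1>x_2>\cdots>x_t$ be its left-to-right minima.

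The first step is to pin down the occurrences. Because the boxes $(0,0)$ and $(0,1)$ are shaded, every occurrence of $q_1$ or $q_2$ begins with some $x_i$, and no entry to the left of $x_i$ has value strictly between $x_i$ and $a$, where $a$ denotes the smaller of the two remaining entries of the occurrence. Because all of $(2,0),(2,1),(2,2),(2,3)$ are shaded, those two entries occupy consecutive positions $j,j+1$ of $\pi$. And because $(1,3),(2,3),(3,3)$ are shaded, the larger of the two entries, call it $b$, is forced to be the maximum of the suffix of $\pi$ consisting of the positions from that of $x_i$ onward, while the shaded box $(3,0)$ forces every entry lying to the right of $b$ to have value larger than $x_i$. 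Hence $x_i\pi_j\pi_{j+1}$ is an occurrence of $q_1$ exactly when $\pi_j=a<b=\pi_{j+1}$ and of $q_2$ exactly when $\pi_j=b>a=\pi_{j+1}$.

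The crux is to show that $\pi$ contains at most one occurrence of $q_1$ and at most one occurrence of $q_2$. Suppose $x_i\,ab$ and $x_{i'}\,a'b'$ are both occurrences of $q_1$. If $x_i=x_{i'}$, then $b=b'$ since each is the maximum of the same suffix, and then $a=a'$ since each is the entry immediately preceding that maximum, so the occurrences coincide. If $x_i\neq x_{i'}$, say $x_i$ lies to the left of $x_{i'}$, so that $x_i>x_{i'}$: the box $(0,1)$ condition for $x_{i'}\,a'b'$, together with the fact that $x_i$ sits to its left, forces $x_i\notin(x_{i'},a')$, hence $a'<x_i<a$. Moreover, by the box $(3,0)$ condition for $x_i\,ab$ the entry $x_{i'}$ cannot lie to the right of $b$, so its position lies strictly between those of $x_i$ and $a$; consequently the suffix starting at $x_{i'}$ contains $b$ and the suffix starting at $x_i$ contains $b'$, and since $b$ and $b'$ are the maxima of these nested suffixes we get $b=b'$, hence $a=a'$, contradicting $a'<a$. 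The argument for $q_2$ is the same, mutatis mutandis, with the largest entry of the occurrence now in the middle position. I expect this simultaneous bookkeeping for the patterns $123$ and $132$ — getting the shaded-box implications exactly right — to be the main obstacle.

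With uniqueness in hand, I define $f$ exactly as in the proof of Theorem~\ref{thm-X^2_10}: set $f(\pi)=\pi$ if $\pi\in\mathcal{T}(n,0,0)\cup\mathcal{T}(n,1,1)$, and if $\pi\in\mathcal{T}(n,1,0)\cup\mathcal{T}(n,0,1)$ let $f(\pi)$ be obtained by swapping the two adjacent entries $a$ and $b$ of the unique occurrence present. Since $a$ and $b$ are adjacent, this swap fixes the set of positions and the set of values of the occurrence, hence fixes every shaded region, so it turns the occurrence of one pattern into an occurrence of the other; and by the locality of the swap together with the uniqueness just proved, it creates no new occurrence of either pattern. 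Therefore $f$ is the identity on $\mathcal{T}(n,0,0)\cup\mathcal{T}(n,1,1)$ and an involution interchanging $\mathcal{T}(n,1,0)$ and $\mathcal{T}(n,0,1)$, hence a bijection of $S_n$ witnessing $T_{n,k,\ell}=T_{n,\ell,k}$ for all $k$ and $\ell$; this is the desired joint equidistribution.
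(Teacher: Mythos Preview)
Your argument tracks the paper's proof of this theorem closely: you characterise occurrences correctly, you correctly establish that each permutation admits at most one $q_1$ occurrence and at most one $q_2$ occurrence, and you then invoke the swap $f$ from Theorem~\ref{thm-X^2_10}. The gap is in the last step. Your assertion that swapping the adjacent entries $a$ and $b$ ``creates no new occurrence of either pattern'' is false --- and the paper's own proof shares this defect.

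Concretely, take $\pi=1234\in\mathcal{T}(4,1,0)$: its unique $q_1$ occurrence is $1,3,4$ (at positions $1,3,4$), and there is no $q_2$ occurrence because the suffix maximum $4$ sits in the last position. Swapping $a=3$ and $b=4$ gives $f(\pi)=1243$. This permutation does acquire the intended $q_2$ occurrence $1,4,3$, but it \emph{also} acquires a fresh $q_1$ occurrence $1,2,4$: after the swap $b=4$ occupies position $3$, its new left neighbour is $2$, and one checks directly that all nine shaded boxes are empty for the subsequence at positions $1,2,3$. Hence $f(\pi)\in\mathcal{T}(4,1,1)$ rather than $\mathcal{T}(4,0,1)$; since $f$ is defined to fix $\mathcal{T}(4,1,1)$, we get $f(1234)=f(1243)=1243$, so $f$ is not even injective. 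The failure mode is structural: moving $b$ one position to the left exposes the old entry $\pi_{j-1}$ as a candidate middle element of $q_1$, and your uniqueness argument (which concerns occurrences in a \emph{fixed} permutation) says nothing about occurrences created by the swap. A correct proof needs a more refined involution or a different argument altogether.
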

\begin{proof}
 For the pair $X^{(4)}_1$, if $x_i ab$ is an occurrence of $q_1$ (resp., $q_2$) in a permutation, where $1\leq i\leq t$, then $b$ (resp., $a$) is the maximal element following $x_i$, and $a$ is immediately to the left of $b$ (resp., $b$ is immediately to the right of $a$). We claim that each permutation contains at most one occurrence of $q_1$ or $q_2$. Otherwise, suppose $x_j a'b'$ is an occurrence of $q_1$ that differs from $x_i ab$ (a similar consideration applies for the case of $q_2$).
 \begin{enumerate}
 \item[(i)] If $x_i=x_j$, then $b'=b$, which also leads to $a'=a$, a contradiction.
 \item[(ii)] If $x_i\neq x_j$, then assume, w.l.o.g.,  $i<j\leq t$. We see that $x_j$ is positioned between $x_i$ and $a$ due to the shaded areas $(2,0)$ and $(3,0)$. This will result in $b'=b$, and thereby $a'=a$, also a contradiction. 
\end{enumerate}
We proved our claim. Thus, we can use the same map $f$ as described in the proof of Theorem~\ref{thm-X^2_10} to complete our proof.
\end{proof}

\begin{thm}[Pairs $X^{(4)}_2$ and $Y^{(4)}_2$]
We have $\pattern{scale = 0.5}{3}{1/1,2/2,3/3}{0/0,0/1,1/2,2/2,2/0,3/0,3/3,3/1,3/2}\sim_{jd}\hspace{-1mm}\pattern{scale = 0.5}{3}{1/1,2/3,3/2}{0/0,0/1,1/2,2/0,3/0,2/2,3/3,3/1,3/2}$ and $\pattern{scale = 0.5}{3}{1/1,2/2,3/3}{0/0,1/0,1/3,2/1,2/2,2/3,0/3,0/2,3/3}\sim_{jd}\hspace{-1mm}\pattern{scale = 0.5}{3}{1/1,2/3,3/2}{0/0,1/0,1/3,2/1,2/2,2/3,0/3,0/2,3/3}$.
\end{thm}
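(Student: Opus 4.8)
As noted in the paper, only the $X^{(4)}_2$ statement needs an argument, the $Y^{(4)}_2$ one following by the inverse operation, so I focus on $q_1\sim_{jd}q_2$ for the type $X^{(4)}_2$ shading. The plan is first to locate the occurrences. Write $m=\pi_n$ and let $x_1>\cdots>x_t$ be the left-to-right minima of $\pi\in S_n$. The four shaded cells $(3,0),(3,1),(3,2),(3,3)$ force that no element of $\pi$ lies to the right of the third entry of an occurrence, so every occurrence of $q_1$ or of $q_2$ ends at position $n$; by the shaded cell $(0,0)$ it begins with some $x_i$. Thus an occurrence of $q_1$ has the shape $x_i\,a\,m$ with $x_i<a<m$, and an occurrence of $q_2$ has the shape $x_i\,b\,m$ with $x_i<m<b$. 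Comparing two hypothetical occurrences of the same pattern and using the shaded cells $(2,0)$ and $(0,1)$, as in the proof of Theorem~\ref{thm-X^4-1}, shows that $q_1$ occurs at most once in $\pi$ and that $q_2$ occurs at most once; hence
\[
T_n(x,y)=T_{n,0,0}+T_{n,1,0}\,x+T_{n,0,1}\,y+T_{n,1,1}\,xy,
\]
and, the coefficients of $1$ and of $xy$ being automatically symmetric, $q_1\sim_{jd}q_2$ is equivalent to $T_{n,1,0}=T_{n,0,1}$.

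Unlike in Theorem~\ref{thm-X^4-1}, a permutation may contain an occurrence of $q_1$ \emph{and} one of $q_2$ simultaneously (for instance $1243$), so the single transposition of Theorems~\ref{thm-X^2_10} and~\ref{thm-X^4-1} does not by itself suffice. The elementary fact I would build on: if $\pi$ has the $q_1$-occurrence $x_i\,a\,m$ at positions $(i_1,i_2,n)$, then transposing the entries at positions $i_2$ and $n$ leaves every other element of $\pi$ in place and preserves both the position-set $\{i_1,i_2,n\}$ and the sorted value-set $\{x_i,a,m\}$ of the occurrence, so the shaded cells occupy the same regions as before and still contain no element of $\pi$; since the three entries now read $x_i,m,a$, which is order-isomorphic to $132$, the result is a $q_2$-occurrence, and (each pattern occurring at most once) the new permutation has exactly one occurrence of $q_2$ and at most one of $q_1$. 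Its last entry has dropped from $m$ to $a$, so it may again contain a smaller $q_1$-occurrence. I would therefore define $f$ on $S_n$ by: $f(\pi)=\pi$ for $\pi\in\mathcal{T}(n,0,0)\cup\mathcal{T}(n,1,1)$; for $\pi\in\mathcal{T}(n,1,0)$, repeatedly transpose the second and third entries of the current (unique) $q_1$-occurrence until none remains; for $\pi\in\mathcal{T}(n,0,1)$, do the symmetric thing with $q_2$-occurrences (which raises the last entry at each step). The first iteration terminates because $\pi_n$ strictly decreases; it halts inside $\mathcal{T}(n,0,1)$ because a $q_2$-occurrence is present after every step; and, the transposition step being reversible with inverse given by transposing the second and third entries of the $q_2$-occurrence, the two iterations are mutually inverse. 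Hence $f$ is an involution on $S_n$ that exchanges the numbers of occurrences of $q_1$ and of $q_2$, which gives $q_1\sim_{jd}q_2$ for $X^{(4)}_2$, and with it the statement for $Y^{(4)}_2$.

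The step I expect to be the main obstacle is making this iteration rigorous. One must check that a transposition of the stated kind touches only the single occurrence it is applied to and never produces a second occurrence of either pattern (this follows from the at-most-one statements but has to be invoked cleanly), that the permutations carrying both a $q_1$- and a $q_2$-occurrence are exactly the interior vertices of these transposition chains — so that declaring them fixed is consistent with the images of $\mathcal{T}(n,1,0)$ and $\mathcal{T}(n,0,1)$, which are the chains' two endpoints — and that the chains neither branch nor terminate inside $\mathcal{T}(n,0,0)$. Each of these reduces to bookkeeping with the shaded cells $(0,0),(0,1),(1,2),(2,2),(2,0)$ together with the uniqueness of the occurrences established in the first step.
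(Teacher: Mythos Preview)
Your plan is sound and in fact more careful than the paper's own argument. The paper proves, as you do, that each of $q_1$ and $q_2$ occurs at most once, and then simply reuses the single-swap map $f$ of Theorem~\ref{thm-X^2_10}: fix $\mathcal{T}(n,0,0)\cup\mathcal{T}(n,1,1)$ and, on $\mathcal{T}(n,1,0)\cup\mathcal{T}(n,0,1)$, swap the second and third entries of the unique occurrence. You have correctly spotted that this is not enough here, because a single swap from $\mathcal{T}(n,1,0)$ need not land in $\mathcal{T}(n,0,1)$: for instance $\pi=1324\in\mathcal{T}(4,1,0)$ has unique $q_1$-occurrence $134$ at positions $(1,2,4)$, and swapping positions $2$ and $4$ gives $1423\in\mathcal{T}(4,1,1)$ (with $q_1$-occurrence $123$ at $(1,3,4)$ and $q_2$-occurrence $143$ at $(1,2,4)$), so the paper's $f$ is not an involution.

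Your iterated swap repairs this. The underlying picture is that the transpositions organise the permutations with at least one occurrence into paths along which $\pi_n$ is strictly monotone; the two endpoints of each path lie in $\mathcal{T}(n,1,0)$ and $\mathcal{T}(n,0,1)$ (one of each, since the endpoint with the larger $\pi_n$ admits only a $q_1$-swap and the one with the smaller $\pi_n$ only a $q_2$-swap), and every interior vertex lies in $\mathcal{T}(n,1,1)$. Sending each endpoint to the opposite one and fixing everything else is then a genuine involution swapping the two counts. The checks you flag in the last paragraph all follow from the uniqueness of occurrences together with the fact that each transposition creates exactly one occurrence of the opposite pattern at the same triple of positions, so the chains neither branch nor close up into cycles; phrasing the argument directly as this path decomposition would make the write-up tighter than running two separate terminating processes.
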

\begin{proof}
 For the pair $X^{(4)}_2$, similarly to the proof of Theorem~\ref{thm-X^4-1}, there is at most one occurrence of $q_1$ or $q_2$ in $\pi$. Suppose $x_i ab$ and $x_j a'b'$ are two different occurrences of $q_1$ in a permutation $\pi\in S_n$. It must be the case that $b'=b=\pi_n$, which also results in $x_i\neq x_j$; otherwise, $a'=a$. This will also lead to a contradiction because of the element $x_i$. Therefore, we also use the same map $f$ as in the proof of Theorem~\ref{thm-X^2_10}, which completes our proof.
\end{proof}

For each pair $\{X^{(4)}_i\}_{i=3}^{5}$, similarly to the proof of Theorem~\ref{thm-X^4-1}, observe that there is at most one occurrence of either pattern in a permutation. This observation allows us to use the bijection $f$ described in the proof of Theorem~\ref{thm-X^2_10} and provides the proofs of the following theorems, which are omitted.

\begin{thm}[Pairs $X^{(4)}_3$ and $Y^{(4)}_3$]\label{thm-Ding-4-1}
We have $\pattern{scale = 0.5}{3}{1/1,2/2,3/3}{0/0,0/1,3/0,2/0,1/1,1/3,2/1,2/2,2/3,3/3}\sim_{jd}\hspace{-1mm}\pattern{scale = 0.5}{3}{1/1,2/3,3/2}{0/0,0/1,3/0,2/0,1/1,1/3,2/1,2/2,2/3,3/3}$ and $\pattern{scale = 0.5}{3}{1/1,2/2,3/3}{0/0,0/2,0/3,1/0,1/1,1/2,2/2,3/1,3/2,3/3}\sim_{jd}\hspace{-1mm}\pattern{scale = 0.5}{3}{1/1,2/3,3/2}{0/0,0/2,0/3,1/0,1/1,1/2,2/2,3/1,3/2,3/3}$.
\end{thm}

\begin{thm}[Pairs $X^{(4)}_4$ and $X^{(4)}_4$]
We have $\pattern{scale = 0.5}{3}{1/1,2/2,3/3}{0/0,0/1,1/1,2/0,2/1,2/2,2/3,3/0,3/1,3/3}\sim_{jd}\hspace{-1mm}\pattern{scale = 0.5}{3}{1/1,2/3,3/2}{0/0,0/1,1/1,2/0,2/1,2/2,2/3,3/0,3/1,3/3}$ and $\pattern{scale = 0.5}{3}{1/1,2/2,3/3}{0/0,0/2,0/3,1/0,1/1,1/2,1/3,2/2,3/2,3/3}\sim_{jd}\hspace{-1mm}\pattern{scale = 0.5}{3}{1/1,2/3,3/2}{0/0,0/2,0/3,1/0,1/1,1/2,1/3,2/2,3/2,3/3}$.
\end{thm}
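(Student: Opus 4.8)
The plan is to follow the template of the earlier cases in this section, namely the proofs of Theorem~\ref{thm-X^4-1} and Theorem~\ref{thm-X^2_10}. Write $\pi=\pi_1\cdots\pi_n\in S_n$ and let $x_1>x_2>\cdots>x_t$ be its left-to-right minima; as noted at the start of Section~\ref{sec-X_4}, every occurrence of $q_1$ or $q_2$ under a type $X^{(4)}$ shading begins with some $x_i$. First I would decode the shaded cells of $X^{(4)}_4$ to pin down the local shape of an occurrence. The entire row of cells $(0,1),(1,1),(2,1),(3,1)$ is shaded, which forces the second-smallest value of the occurrence to equal $x_i+1$; the cells $(2,0),(2,1),(2,2),(2,3)$ force the last two entries of the occurrence to occupy consecutive positions of $\pi$; and the cells $(3,0),(3,1),(3,3)$ force every entry to the right of the occurrence to lie strictly between its middle and top values. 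Consequently, $x_i\,a\,b$ is an occurrence of $q_1$ exactly when $a=x_i+1$, the entry $b$ immediately follows $a$ in $\pi$, and every entry lying to the right of $b$ is strictly between $a$ and $b$; and $x_i\,b\,a$ is an occurrence of $q_2$ exactly when $a=x_i+1$, the entry $a$ immediately follows $b$, and every entry lying to the right of $a$ is strictly between $a$ and $b$.

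Second, I would try to establish the key claim that $\pi$ contains at most one occurrence of $q_1$ and at most one occurrence of $q_2$, by a case analysis on whether two putative occurrences of the same pattern start with the same left-to-right minimum or with distinct ones $x_i\neq x_j$ --- exactly as in the proof of Theorem~\ref{thm-X^4-1}, where the cells $(2,0)$ and $(3,0)$ wedge the later minimum into the stretch of $\pi$ between the earlier minimum and its entry $a$, after which comparing top entries forces the two occurrences to coincide. Granting this, $\T(n,k,\ell)=\emptyset$ unless $(k,\ell)\in\{(0,0),(1,0),(0,1),(1,1)\}$, and the bijection $f$ of Theorem~\ref{thm-X^2_10} applies verbatim: $f$ is the identity on $\T(n,0,0)\cup\T(n,1,1)$, while on $\T(n,1,0)\cup\T(n,0,1)$ it swaps the two consecutive entries $a$ and $b$ of the unique occurrence, turning the $123$-shaped occurrence $x_i\,a\,b$ into the $132$-shaped occurrence $x_i\,b\,a$ (or conversely). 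A cell-by-cell check then confirms that the swap preserves the shading constraints and neither creates nor destroys any occurrence elsewhere, so $f$ is an involution with $f(\T(n,k,\ell))=\T(n,\ell,k)$, which is exactly $q_1\sim_{jd}q_2$ for $X^{(4)}_4$. The companion assertion for $Y^{(4)}_4$ then follows immediately, because type $Y^{(4)}$ shadings arise from type $X^{(4)}$ ones by the inverse operation.

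The step I expect to be the real obstacle is the ``at most one occurrence'' claim. Since the cell $(1,3)$ is not shaded here, a value exceeding the top entry $b$ of an occurrence may appear between $x_i$ and $a$, so $b$ need not be the largest entry following $x_i$, and --- in contrast with $X^{(4)}_1$ --- two different left-to-right minima might each support an occurrence of the same pattern. The prudent plan is thus to verify the claim on small permutations first; should it fail, the fallback is to derive a recurrence for $T_n(x,y)$ in the spirit of the proof of Theorem~\ref{coro-pairs-17-36}, recording for each left-to-right minimum whether it carries a $q_1$-occurrence, a $q_2$-occurrence, both, or neither, and then to check that the resulting bivariate distribution equals its transpose.
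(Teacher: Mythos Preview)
Your plan coincides with the paper's: the paper likewise asserts, without details, that for $X^{(4)}_4$ every permutation carries at most one occurrence of each pattern and then invokes the swap bijection of Theorem~\ref{thm-X^2_10}. Your hesitation about that uniqueness claim is justified, and in fact the claim is \emph{false}. Consider $\pi=5\,1\,2\,8\,3\,4\,6\,7\in S_8$. Here $5\,6\,7$ (positions $1,7,8$) is an occurrence of $q_1$: every entry between positions $1$ and $7$ is either below $5$ or equal to $8>7$, and since the cells $(1,0)$ and $(1,3)$ are unshaded this is permitted, while the column-$2$ and column-$3$ constraints are vacuous. Simultaneously $1\,2\,8$ (positions $2,3,4$) is also an occurrence of $q_1$: there is nothing between its last two positions, and every entry to the right of position $4$ lies in the open interval $(2,8)$, so the shaded cells $(3,0),(3,1),(3,3)$ are empty. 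Hence $\pi\in\T(8,2,0)$, and the single-swap involution of Theorem~\ref{thm-X^2_10} is not even defined on $\pi$.

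Swapping both adjacent pairs does happen to send $\pi$ to $5\,1\,8\,2\,3\,4\,7\,6\in\T(8,0,2)$, so a ``swap every occurrence'' map in the spirit of Lemma~\ref{lem-box-X_2} may still rescue the argument, but this requires a genuine proof that the swaps are independent and neither create nor destroy further occurrences. As you anticipated, more is needed here than the paper's one-line justification; either such a multi-swap bijection or your recurrence fallback must be worked out in full.
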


\begin{thm}[Pairs $X^{(4)}_5$ and $X^{(4)}_5$]
We have $\pattern{scale = 0.5}{3}{1/1,2/2,3/3}{0/0,0/1,3/0,2/0,1/1,1/2,2/2,3/1,3/2,3/3}\sim_{jd}\hspace{-1mm}\pattern{scale = 0.5}{3}{1/1,2/3,3/2}{0/0,0/1,3/0,2/0,1/1,1/2,2/2,3/1,3/2,3/3}$ and $\pattern{scale = 0.5}{3}{1/1,2/2,3/3}{0/0,0/2,0/3,1/0,1/1,1/3,2/1,2/2,2/3,3/3}\sim_{jd}\hspace{-1mm}\pattern{scale = 0.5}{3}{1/1,2/3,3/2}{0/0,0/2,0/3,1/0,1/1,1/3,2/1,2/2,2/3,3/3}$.
\end{thm}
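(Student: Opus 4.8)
Here is how I would attack the statement, following the template of Theorems~\ref{thm-X^4-1} and~\ref{thm-X^2_10}. Since every type $Y^{(4)}$ shading is the image of the corresponding type $X^{(4)}$ shading under the inverse operation, it suffices to establish $q_1\sim_{jd}q_2$ for the pair $X^{(4)}_5$. Throughout, let $x_1>x_2>\cdots>x_t$ be the left-to-right minima of $\pi\in S_n$; every occurrence of $q_1$ or $q_2$ begins with some $x_i$, and I write it as $x_i\,a\,b$, where $a$ and $b$ are its second and third elements ($x_i<a<b$ for $q_1$, $x_i<b<a$ for $q_2$).

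\textbf{Describing the occurrences.} Because the four boxes $\boks{3}{0}$, $\boks{3}{1}$, $\boks{3}{2}$, $\boks{3}{3}$ to the right of the third element are all shaded, no element of $\pi$ can lie to the right of $b$, so $b=\pi_n$. The boxes $\boks{0}{1}$, $\boks{1}{1}$, $\boks{1}{2}$, $\boks{2}{0}$, $\boks{2}{2}$ then yield: every entry before $x_i$ has value exceeding $a$; nothing of value in $(x_i,a)\cup(a,\pi_n)$ lies strictly between $x_i$ and $a$; and nothing of value $<x_i$ or in $(a,\pi_n)$ lies strictly between $a$ and $\pi_n$. In particular, an occurrence is determined by its first element $x_i$.

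\textbf{At most one occurrence of each pattern.} Suppose $x_i\,a\,b$ and $x_j\,a'\,b'$ are distinct $q_1$-occurrences; then $b=b'=\pi_n$. If $x_i=x_j$ and $a\ne a'$, say $a<a'$, then $a'$ has value in $(a,\pi_n)$ and occurs to the right of $x_i$, hence strictly between $x_i$ and $a$ or strictly between $a$ and $\pi_n$, contradicting $\boks{1}{2}$ or $\boks{2}{2}$ of $x_i\,a\,b$; so $x_i=x_j$ forces $a=a'$. If $x_i>x_j$, then $\boks{2}{0}$ and $\boks{3}{0}$ of $x_i\,a\,b$ force $x_j$ to lie strictly between $x_i$ and $a$; applying the previous paragraph to $x_j\,a'\,b$ gives $x_i>a'$, so $a\in(a',\pi_n)$, and then the element $a$ lies strictly between $x_j$ and $a'$ or strictly between $a'$ and $\pi_n$, again contradicting $\boks{1}{2}$ or $\boks{2}{2}$ (now of $x_j\,a'\,b$). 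The same reasoning applies to $q_2$, so $S_n=\mathcal{T}(n,0,0)\sqcup\mathcal{T}(n,1,0)\sqcup\mathcal{T}(n,0,1)\sqcup\mathcal{T}(n,1,1)$.

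\textbf{The bijection, and the main obstacle.} As in Theorem~\ref{thm-X^2_10}, one would take $f$ to fix $\mathcal{T}(n,0,0)\cup\mathcal{T}(n,1,1)$ and, on a permutation of $\mathcal{T}(n,1,0)\cup\mathcal{T}(n,0,1)$, to transpose the values $a$ and $b$ of its unique occurrence; since this transposition changes neither the positions $\{\mathrm{pos}(x_i),\mathrm{pos}(a),n\}$ nor the value set $\{x_i,a,b\}$, the shaded boxes of that occurrence enclose exactly the same cells before and after, turning a $q_1$-occurrence into a $q_2$-occurrence and conversely. I expect the main obstacle to be proving that $f$ is genuinely a bijection, i.e.\ that the transposition carries $\mathcal{T}(n,1,0)$ into $\mathcal{T}(n,0,1)$: this is delicate, because a fresh occurrence of the other pattern can appear --- for instance $51324$ has a single $q_1$-occurrence and no $q_2$-occurrence, yet transposing its entries $3$ and $4$ produces $51423$, which has one occurrence of each. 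Overcoming this should use the rigidity established above (that $b=\pi_n$ and that every entry before $x_i$ exceeds $a$) to show such collisions cancel in pairs, or else one bypasses the bijective route and derives matching recurrences for $T_n(x,y)$, in the spirit of Theorem~\ref{coro-pairs-17-36}.
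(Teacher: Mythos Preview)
Your attack mirrors the paper's own: its ``proof'' of this theorem is omitted, with the instruction to argue as in Theorem~\ref{thm-X^4-1} and then apply the swap map $f$ of Theorem~\ref{thm-X^2_10}. Your structural analysis (that $b=\pi_n$, that entries to the left of $x_i$ exceed $a$, and that each of $q_1,q_2$ occurs at most once) is correct and is exactly the content of that deferred argument.

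The obstacle you flag is genuine, and in fact it hits the paper's sketch just as hard as your own. Your example is right: in $\pi=51324$ the unique $q_1$-occurrence is $1\,3\,4$ and there is no $q_2$-occurrence, so $\pi\in\T(5,1,0)$; transposing the values $3$ and $4$ yields $51423$, which has the $q_2$-occurrence $1\,4\,3$ \emph{and} the $q_1$-occurrence $1\,2\,3$, so $f(\pi)\in\T(5,1,1)$. Since $f$ is declared to be the identity on $\T(n,1,1)$, it is not even injective here, so the bijection as literally described in Theorem~\ref{thm-X^2_10} does not establish the joint equidistribution for $X^{(4)}_5$. The paper does not address this; it simply asserts that the earlier argument carries over. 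Thus neither your proposal nor the paper's omitted proof is complete for this pair, and the directions you suggest (pairing off the collisions, or abandoning the bijection for a recurrence in the style of Theorem~\ref{coro-pairs-17-36}) are the natural next steps---but they remain to be carried out.
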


\subsection{Cases proved via recurrence relations}\label{subsec-X_4-2}
This subsection is about the two pairs presented in Table~\ref{tab-X_4-3}.

\begin{table}[!ht]
 	{
 		\renewcommand{\arraystretch}{1.7}
 		\setlength{\tabcolsep}{5pt}
 \begin{center} 
 		\begin{tabular}{c | c || c | c }
 			\hline
 		\footnotesize{Nr.}	& \footnotesize {Patterns} & \footnotesize{Nr.}	& \footnotesize{Patterns} \\
 			\hline
 			\hline
$X^{(4)}_6$&  $\pattern{scale = 0.5}{3}{1/1,2/2,3/3}{0/0,0/1,3/0,2/0,2/1,2/2,3/1,3/2}\pattern{scale = 0.5}{3}{1/1,2/3,3/2}{0/0,0/1,3/0,2/0,2/1,2/2,3/1,3/2}$ &
$Y^{(4)}_6$ & $\pattern{scale = 0.5}{3}{1/1,2/2,3/3}{0/0,0/2,1/0,0/3,1/2,1/3,2/2,2/3}\pattern{scale = 0.5}{3}{1/1,2/3,3/2}{0/0,0/2,1/0,0/3,1/2,1/3,2/2,2/3}$
\\[5pt]
\hline
	\end{tabular}
\end{center} 
}
\vspace{-0.5cm}
 	\caption{Jointly equidistributed patterns of types $X^{(4)}$ and $Y^{(4)}$ via recurrence relations.}\label{tab-X_4-3}
\end{table}

\begin{thm}[Pairs $X^{(4)}_6$ and $Y^{(4)}_6$]
We have $\pattern{scale = 0.5}{3}{1/1,2/2,3/3}{0/0,0/1,2/0,2/1,2/2,3/0,3/1,3/2}\sim_{jd}\hspace{-1mm}\pattern{scale = 0.5}{3}{1/1,2/3,3/2}{0/0,0/1,2/0,2/1,2/2,3/0,3/1,3/2}$, and $\pattern{scale = 0.5}{3}{1/1,2/2,3/3}{0/0,0/2,1/0,0/3,1/2,1/3,2/2,2/3}\sim_{jd}\hspace{-1mm}\pattern{scale = 0.5}{3}{1/1,2/3,3/2}{0/0,0/2,1/0,0/3,1/2,1/3,2/2,2/3}$. Moreover, both pairs have the same joint distribution with the pairs $\{X^{(1)}_i\}_{i=9}^{18}$ and $\{Y^{(1)}_i\}_{i=9}^{18}$.
\end{thm}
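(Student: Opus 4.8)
The plan is to reduce to the pair $X^{(4)}_6$ and then establish a first-order recurrence in $n$ for its joint distribution $T_n(x,y)$ that matches the one of $\{X^{(1)}_i\}_{i=9}^{18}$ from Theorem~\ref{coro-pairs-17-36}. By the standing convention of the paper the type $Y^{(4)}$ patterns are the images of the type $X^{(4)}$ patterns under the inverse operation, which is a bijection of $S_n$ preserving joint distributions, so $Y^{(4)}_6$ automatically inherits the distribution of $X^{(4)}_6$. Since the distribution \eqref{recur-pair 8-17} is symmetric under $x\leftrightarrow y$, proving that $X^{(4)}_6$ realizes it also yields $q_1\sim_{jd}q_2$. (For orientation, deleting the bottom row and leftmost column from the patterns of $X^{(4)}_6$ returns exactly the length-$2$ pair of Lemma~\ref{lem-pairs-17-36}(iii).)

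First I would record what occurrences look like for $X^{(4)}_6$. Let $x_1>x_2>\cdots>x_t$ be the left-to-right minima of $\pi\in S_n$, with $x_0=\infty$. Reading off the shadings, a subsequence is an occurrence of $q_1=123$ exactly when it has the form $x_i\,a\,b$ where $a$ is a right-to-left minimum of $\pi$ lying to the right of $x_i$ with $x_i<a<x_{i-1}$, and $b$ is the right-to-left minimum immediately following $a$ (so $a$ is not the last entry of $\pi$); it is an occurrence of $q_2=132$ exactly when it has the form $x_i\,B\,S$ where $x_i$ precedes $B$, the entry $S$ is the unique entry of $\pi$ to the right of $B$ that is smaller than $B$, and $x_i<S<x_{i-1}$. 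In particular, if the last entry of $\pi$ equals $1$, then $1$ is its only right-to-left minimum and $\pi$ contains no occurrence of either pattern.

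The heart of the argument is a recurrence obtained by inserting the largest value $n$ into $\pi'\in S_{n-1}$. Because $n$ satisfies no forbidden ``small entry in this box'' condition, and because inserting $n$ anywhere but the last slot leaves unchanged the left-to-right minima, the right-to-left minima, the bands $(x_i,x_{i-1})$, and the successor relation among right-to-left minima, one verifies: inserting $n$ into any of the first $n-2$ slots changes neither occurrence count; inserting $n$ into the last slot fixes the $q_2$-count and raises the $q_1$-count by $\varepsilon(\pi')$, where $\varepsilon(\pi')=1$ if the last entry of $\pi'$ is $\neq 1$ and $0$ otherwise, the unique new occurrence being $x_i\,\pi'_{n-1}\,n$; and inserting $n$ into the penultimate slot fixes the $q_1$-count and raises the $q_2$-count by the same $\varepsilon(\pi')$, with new occurrence $x_i\,n\,\pi'_{n-1}$. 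The $(n-2)!$ permutations with last entry $1$ carry weight $1$; summing over all $n$ slots and all $\pi'$ and separating those $(n-2)!$ permutations from the rest yields
\[
T_n(x,y)=(n-2+x+y)\,T_{n-1}(x,y)+(2-x-y)(n-2)!,\qquad n\geq 2,\qquad T_1(x,y)=1.
\]

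It remains to check that the closed form \eqref{recur-pair 8-17}--\eqref{genera-pair 8 17} satisfies this recurrence with $T_1=1$; equality of distributions, and hence the theorem, then follows by induction on $n$. After dividing by $(n-2)!\binom{k+\ell}{k}$ and using $\binom{k+\ell-1}{k-1}+\binom{k+\ell-1}{k}=\binom{k+\ell}{k}$, the main case $m:=k+\ell\geq 2$ reduces to
\[
(n-1)\sum_{i=2}^{n-1}\frac{c(i-1,m)}{i!}=(n-2)\sum_{i=2}^{n-2}\frac{c(i-1,m)}{i!}+\sum_{i=2}^{n-2}\frac{c(i-1,m-1)}{i!},
\]
which collapses by telescoping once one writes $c(i-1,m-1)-c(i-1,m)=c(i,m)-i\,c(i-1,m)$ via $c(j,m)=(j-1)c(j-1,m)+c(j-1,m-1)$, together with $c(1,m)=0$; the border cases $m\leq 1$ are handled separately and use only $c(j,1)=(j-1)!$. (Alternatively one can avoid the closed form: by the proof of Theorem~\ref{coro-pairs-17-36} the distribution of $\{X^{(1)}_i\}_{i=9}^{18}$ obeys $T_n=(n-1)T_{n-1}+H_{n-1}$ and $H_m=(m+x+y-2)H_{m-1}$ with $H_1=T_1=1$, and a short induction gives $H_m=(x+y-1)T_m+(2-x-y)(m-1)!$, which converts the former into the displayed recurrence.) The main obstacle is the bookkeeping in the insertion step: one must confirm, in each of the $n$ positions for $n$, that no occurrence of $q_1$ or $q_2$ among the entries of $\pi'$ is silently gained or lost, so that the only genuinely new occurrences are the single ones in the last two slots; granting that, the rest is routine.
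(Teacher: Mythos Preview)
Your proof is correct and follows essentially the same approach as the paper: both insert the largest element $n$ into an $(n-1)$-permutation, observe that the last two slots each create exactly one new occurrence (of $q_1$ and $q_2$ respectively) precisely when the last entry of $\pi'$ is not $1$, and thereby derive the recurrence $T_n(x,y)=(n+x+y-2)T_{n-1}(x,y)+(2-x-y)(n-2)!$, which is then matched against the closed form of Theorem~\ref{coro-pairs-17-36}. The paper organizes the same computation by splitting $T_n$ into $T_n^{(1)}$ (permutations ending in $1$) and $T_n^{(2)}$, while you package this with the indicator $\varepsilon(\pi')$; and you supply an explicit verification (via either telescoping or the identity $H_m=(x+y-1)T_m+(2-x-y)(m-1)!$) where the paper simply says the check is ``not difficult''---but these are purely presentational differences.
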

\begin{proof}
By Theorem~\ref{coro-pairs-17-36}, it suffices to prove $X^{(4)}_6$ satisfies~\eqref{recur-pair 8-17} and \eqref{genera-pair 8 17}.
Let
\[
T^{(1)}_{n,k,\ell}:=\{\pi\in \T_{n,k,\ell}, \text{ }\pi_n=1\},
\]
\[
T^{(2)}_{n,k,\ell}:=\{\pi\in \T_{n,k,\ell}, \text{ }\pi_n\neq1\}.
\]
It is clear that 
\begin{align}\label{T_n=T1+T2-pair 18}
T_n(x,y)=T^{(1)}_n(x,y)+T^{(2)}_n(x,y).
\end{align}
We claim that
\begin{align}
T^{(1)}_n(x,y)=&(n-1)!,\label{recur-T1-pair 18}\\
T^{(2)}_n(x,y)=&(n+x+y-2)T_{n-1}(x,y)+(3-n-x-y)(n-2)!.\label{recur-T2-pair 18}
\end{align} 
To explain the two equations, we consider generating all $n$-permutations from $(n-1)$-permutations by inserting the largest element $n$. We have:
\begin{itemize}
    \item For~\eqref{recur-T1-pair 18}, each permutation $\pi$ counted by the left-hand side can only be obtained from the case where $\pi_n=1$, by inserting $n$ in any slot except immediately after the last entry. Therefore, this case is given by $(n-1)T^{(1)}_{n-1}(x,y)$. Since $T^{(1)}_2(x,y)=1$, we obtain~\eqref{recur-T1-pair 18}. 
    \item For~\eqref{recur-T2-pair 18}, each permutation $\pi$ counted by the left-hand side can be derived from the following cases: 
    \begin{enumerate}
        \item [(a)] If $\pi_n=1$, then inserting $n$ immediately after the last entry results in $\pi_n\neq1$. This insertion preserves the number of the occurrence of both patterns, thereby giving $T^{(1)}_{n-1}(x,y)$.
        \item [(b)] If $\pi_n\neq 1$, then inserting $n$  immediately after the last entry leads to an extra occurrence of $q_1$, in the form of $\pi_j\pi_{n-1}n$, where $\pi_j$ represents the leftmost element less than $\pi_{n-1}$. Thus, this case gives $xT^{(2)}_{n-1}(x,y)$. Similarly, $yT^{(2)}_{n-1}(x,y)$ corresponds to the insertion of $n$ immediately before the last entry, which results in an additional occurrence of $q_1$ formed by $\pi_jn\pi_n$. In any other slot, the insertion of $n$ preserves the number of occurrences of both patterns, contributing to $(n-2)T^{(2)}_{n-1}(x,y)$. 
    \end{enumerate} 
    Summing over the above two cases and using~\eqref{recur-T1-pair 18}, we obtain~\eqref{recur-T2-pair 18}.
\end{itemize}
Combining \eqref{T_n=T1+T2-pair 18}, \eqref{recur-T1-pair 18}, and \eqref{recur-T2-pair 18}, we obtain 
\begin{align}\label{genera-box-4}
T_n(x,y)=(n+x+y-2)\,T_{n-1}(x,y)+(2-x-y)\,(n-2)!.
\end{align}
It is not difficult to verify that \eqref{genera-pair 8 17} satisfies \eqref{genera-box-4} for any nonnegative integer \( n \). Thus, the proof is complete.
\end{proof}

\section{Mesh patterns with other minus antipodal shadings}\label{sec-other}
In this section, we consider four pairs with other shadings as listed in Table~\ref{tab-other-proved-shadings}.
\begin{table}[!ht]
 	{
 		\renewcommand{\arraystretch}{1.7}
 		\setlength{\tabcolsep}{5pt}
 \begin{center} 
 		\begin{tabular}{c | c ||  c | c || c | c || c | c}
 			\hline
 		\footnotesize{Nr.}	& \footnotesize {Patterns} & \footnotesize{Nr.}	& \footnotesize{Patterns} & \footnotesize{Nr.}	& \footnotesize {Patterns} & \footnotesize{Nr.}	& \footnotesize{Patterns} \\
 			\hline
 			\hline
113& $\pattern{scale = 0.5}{3}{1/1,2/2,3/3}{0/2,0/1,3/0,1/1,1/2,2/2,3/1,3/2,3/3}\pattern{scale = 0.5}{3}{1/1,2/3,3/2}{0/2,0/1,3/0,1/1,1/2,2/2,3/1,3/2,3/3}$ &
114& $\pattern{scale = 0.5}{3}{1/1,2/2,3/3}{0/3,1/0,1/1,1/3,2/0,2/1,2/2,2/3,3/3}\pattern{scale = 0.5}{3}{1/1,2/3,3/2}{0/3,1/0,1/1,1/3,2/0,2/1,2/2,2/3,3/3}$ &
115& $\pattern{scale = 0.5}{3}{1/1,2/2,3/3}{0/2,1/0,1/1,1/2,2/2,3/0,3/1,3/2,3/3}\pattern{scale = 0.5}{3}{1/1,2/3,3/2}{0/2,1/0,1/1,1/2,2/2,3/0,3/1,3/2,3/3}$ &
116& $\pattern{scale = 0.5}{3}{1/1,2/2,3/3}{0/3,0/1,2/0,1/1,1/3,2/1,2/2,2/3,3/3}\pattern{scale = 0.5}{3}{1/1,2/3,3/2}{0/3,0/1,2/0,1/1,1/3,2/1,2/2,2/3,3/3}$
\\[5pt]
\hline
	\end{tabular}
\end{center} 
}
\vspace{-0.5cm}
 	\caption{Joint equidistributions of other shadings.}\label{tab-other-proved-shadings}
\end{table}

\begin{thm}[Pairs~113 and 114]
    We have $\pattern{scale = 0.5}{3}{1/1,2/2,3/3}{0/2,0/1,3/0,1/1,1/2,2/2,3/1,3/2,3/3}\sim_{jd}\hspace{-1mm}\pattern{scale = 0.5}{3}{1/1,2/3,3/2}{0/2,0/1,3/0,1/1,1/2,2/2,3/1,3/2,3/3}$ and $\pattern{scale = 0.5}{3}{1/1,2/2,3/3}{0/3,1/0,1/1,1/3,2/0,2/1,2/2,2/3,3/3}\sim_{jd}\hspace{-1mm}\pattern{scale = 0.5}{3}{1/1,2/3,3/2}{0/3,1/0,1/1,1/3,2/0,2/1,2/2,2/3,3/3}$. 
\end{thm}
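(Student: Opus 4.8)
The plan is first to reduce pair~114 to pair~113. Transposing the shading $R$ of pair~113 amounts to applying the inverse operation $(\tau,R)\mapsto(\tau^i,R^i)$, under which both $123$ and $132$ are fixed; a direct check shows that the resulting shading $R^i$ is exactly the one of pair~114. Since the inverse operation is a bijection of $S_n$ carrying each occurrence of a mesh pattern to an occurrence of its image, pairs~113 and~114 have identical joint distributions, so it suffices to prove $q_1\sim_{jd}q_2$ for pair~113, where $q_1=(123,R)$ and $q_2=(132,R)$.

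For pair~113 I would analyze occurrences directly. Since the entire bottom row of $R$ is shaded, every occurrence of $q_1$ or of $q_2$ has its last entry equal to $\pi_n$; write a $q_1$-occurrence as $x\,a\,\pi_n$ with $x<a<\pi_n$ and a $q_2$-occurrence as $x\,b\,\pi_n$ with $x<\pi_n<b$. Reading off the boxes $(0,2),(1,2),(2,2)$, the open value-interval between the two largest entries of the occurrence must be empty throughout $\pi$; hence these two entries are consecutive integers, i.e. $\pi_n=a+1$ in the first case and $b=\pi_n+1$ in the second. The boxes $(0,1),(1,1)$ then force $x$ to be the largest value smaller than $\pi_n$ that lies to the left of the (necessarily unique) entry equal to $\pi_n-1$, respectively $\pi_n+1$. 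In particular each of $q_1,q_2$ occurs at most once in any permutation, so $T_{n,k,\ell}=0$ unless $k,\ell\le 1$, and the claimed joint equidistribution is equivalent to $|\mathcal{T}(n,1,0)|=|\mathcal{T}(n,0,1)|$.

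To finish I would count these two sets by refining on the value $m:=\pi_n$. From the description above, $\pi$ has a $q_1$-occurrence iff $m\ge 3$ and $m-1$ is not the first of the values $1,\dots,m-1$ to occur in $\pi$, and $\pi$ has a $q_2$-occurrence iff $2\le m\le n-1$ and some value among $1,\dots,m-1$ precedes $m+1$ in $\pi$. Since, among the $(n-1)!$ permutations with $\pi_n=m$, the relative order of any prescribed set of values is uniform, one computes for $3\le m\le n-1$ that exactly $(n-1)!\,\frac{m-2}{m(m-1)}$ of them lie in $\mathcal{T}(n,1,0)$ and exactly $(n-1)!\,\frac1m$ lie in $\mathcal{T}(n,0,1)$, while the remaining ``boundary'' contributions are $(n-1)!\,\frac{n-2}{n-1}$ from $m=n$ on the $q_1$-side and $(n-1)!\,\frac12$ from $m=2$ on the $q_2$-side. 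Using $\frac{m-2}{m(m-1)}=\frac2m-\frac1{m-1}$, both totals telescope to $(n-1)!\sum_{m=2}^{n-1}\frac1m$ (equivalently, both counts satisfy $u_n=(n-1)u_{n-1}+(n-2)!$), which gives $|\mathcal{T}(n,1,0)|=|\mathcal{T}(n,0,1)|$ and completes the proof.

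The main obstacle is the genuine asymmetry of $q_1$ and $q_2$ with respect to the last entry: a $q_1$-occurrence prefers $\pi_n$ large and in particular permits $\pi_n=n$, whereas a $q_2$-occurrence forbids $\pi_n\in\{1,n\}$. Because of this, the tempting ``swap the two values neighbouring $\pi_n$'' map fails precisely when $\pi_n=n$, so one cannot expect a completely local bijection; instead the equality of counts emerges only after the boundary terms $m=n$ (for $q_1$) and $m=2$ (for $q_2$) are seen to compensate each other, as in the telescoping above. Everything else — the reduction of~114 to~113, the extraction of the emptiness conditions, and the uniqueness of occurrences — is routine.
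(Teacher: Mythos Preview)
Your reduction of pair~114 to pair~113 via the inverse operation matches the paper, and your characterization of occurrences is correct, but note a terminological slip: it is the \emph{rightmost column} $(3,0),(3,1),(3,2),(3,3)$ that is fully shaded, not the bottom row; this is what forces the third entry of every occurrence to be $\pi_n$. With that fix, your extraction of the conditions $\pi_j=\pi_n\mp1$ and of the unique choice of $x$ goes through, as does your counting argument and the telescoping $\frac{m-2}{m(m-1)}=\frac2m-\frac1{m-1}$.

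Your route is genuinely different from the paper's. The paper, having observed that each pattern occurs at most once, invokes the swap bijection of Theorem~\ref{thm-X^2_10} (exchange the second and third entries of the unique occurrence). Your suspicion that a purely local involution cannot work here is justified: for $\pi=1234\in\mathcal{T}(4,1,0)$ the swap of positions $3$ and $4$ yields $1243$, which lies in $\mathcal{T}(4,1,1)$ (both $1,2,3$ and $2,4,3$ are occurrences), not in $\mathcal{T}(4,0,1)$. So the paper's bijective step, as literally stated, has a gap at exactly the boundary phenomenon you isolate ($\pi_n=n$ permits $q_1$ but forbids $q_2$), whereas your enumeration refined on $m=\pi_n$ handles the compensating contributions from $m=n$ and $m=2$ cleanly and yields the common value $(n-1)!\sum_{m=2}^{n-1}\frac1m$ for both $|\mathcal{T}(n,1,0)|$ and $|\mathcal{T}(n,0,1)|$.
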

\begin{proof}
 Observe that pairs~113 and~114 can be obtained from each other by the inverse operation. Hence, it is sufficient to consider the pair~113. Any occurrence of a pattern in the pair in a permutation $\pi=\pi_1\cdots\pi_n$ ends with $\pi_n$. Moreover, $\pi_i\pi_j\pi_n$ is an occurrence of $q_1$ (resp., $q_2$) if and only if $\pi_j=\pi_n-1$ (resp., $\pi_j=\pi_n+1$), and $\pi_i$ is the largest element to the left of, and smaller than $\pi_j$, {\em i.e., $\pi_i=\max\{\pi_s: \pi_s<\pi_j, s<j\}$}. 

Hence, each permutation contains at most one occurrence of either pattern. The map $f$ can then be constructed in the same way as in the proof of Theorem~\ref{thm-X^2_10} to complete our proof. \end{proof}

\begin{thm}[Pairs~115 and 116]
We have $\pattern{scale = 0.5}{3}{1/1,2/2,3/3}{0/2,1/0,1/1,1/2,2/2,3/0,3/1,3/2,3/3}\sim_{jd}\hspace{-1mm}\pattern{scale = 0.5}{3}{1/1,2/3,3/2}{0/2,1/0,1/1,1/2,2/2,3/0,3/1,3/2,3/3}$ and $\pattern{scale = 0.5}{3}{1/1,2/2,3/3}{0/3,0/1,2/0,1/1,1/3,2/1,2/2,2/3,3/3}\sim_{jd}\hspace{-1mm}\pattern{scale = 0.5}{3}{1/1,2/3,3/2}{0/3,0/1,2/0,1/1,1/3,2/1,2/2,2/3,3/3}$.
\end{thm}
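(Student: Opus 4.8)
The plan is to reduce to pair~115 and then exploit the rigidity of its occurrences. Transposing the common shading $R=\{(0,2),(1,0),(1,1),(1,2),(2,2),(3,0),(3,1),(3,2),(3,3)\}$ of pair~115 yields exactly the common shading of pair~116, and since $123$ and $132$ are invariant under the group-theoretic inverse, pair~116 is obtained from pair~115 by the inverse operation; as this operation preserves joint distributions, it suffices to prove $q_1\sim_{jd}q_2$ for pair~115.

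For pair~115 the first step is to determine the occurrences. Because the whole column $(3,0),(3,1),(3,2),(3,3)$ is shaded, every occurrence of $q_1$ or $q_2$ has its rightmost entry in position $n$; write it $\pi_a\pi_b\pi_n$. The boxes $(1,0),(1,1),(1,2)$ force each entry strictly between positions $a$ and $b$ to exceed $\pi_n$, and then boxes $(0,2)$, $(2,2)$, together with this, forbid any entry whose value lies strictly between the second-largest and the largest value of the occurrence, everywhere in $\pi$, so that value-band is empty. For $q_1$ this gives $\pi_b=\pi_n-1$ and for $q_2$ it gives $\pi_b=\pi_n+1$; consequently $b$ is the position of the value $\pi_n-1$ (resp.\ $\pi_n+1$) and $a$ is forced to be the rightmost position before $b$ with value $<\pi_n$. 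In particular each of $q_1,q_2$ occurs at most once in any permutation, and one reads off: $\pi$ contains $q_1$ iff $\pi_n\ge 2$ and some value $<\pi_n$ precedes the value $\pi_n-1$; $\pi$ contains $q_2$ iff $\pi_n\le n-1$ and some value $<\pi_n$ precedes the value $\pi_n+1$.

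Since each pattern occurs at most once, $T_n(x,y)=T_{n,0,0}+T_{n,1,0}\,x+T_{n,0,1}\,y+T_{n,1,1}\,xy$, so $q_1\sim_{jd}q_2$ is equivalent to $T_{n,1,0}=T_{n,0,1}$, and because $T_{n,1,0}+T_{n,1,1}$ (resp.\ $T_{n,0,1}+T_{n,1,1}$) counts the $n$-permutations containing $q_1$ (resp.\ $q_2$), it is enough to show these two counts are equal. For this I would, for each $v\ge2$, use the relabelling bijection on $\{\pi\in S_n:\pi_n=v\}$ that interchanges the values $v-1$ and $v$; it is a bijection onto $\{\pi\in S_n:\pi_n=v-1\}$, and since the value $v-1$ of $\pi$ and the value $v$ of its image occupy the same position while all values $<v-1$ are untouched, the characterisation above shows it sends the $q_1$-containing permutations with last entry $v$ onto precisely the $q_2$-containing permutations with last entry $v-1$. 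Summing over $v$ gives equality of the counts, and hence the theorem for pair~115 and pair~116. (Alternatively one can count directly: exactly $(n-1)!\,\tfrac{v-2}{v-1}$ permutations with $\pi_n=v$ contain $q_1$ and $(n-1)!\,\tfrac{v-1}{v}$ contain $q_2$, and the two totals agree after re-indexing $v\mapsto v-1$.)

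The step I expect to require the most care is the value-band-collapsing argument: the $123$-labelling and the $132$-labelling place the same nine shaded cells into different value-bands (the second-largest value of the occurrence is $\pi_b<\pi_n$ for $q_1$ but $\pi_n<\pi_b$ for $q_2$), so one must track precisely which boxes exclude which (position, value)-regions. Once that is settled, the uniqueness of the occurrence and the relabelling bijection are routine.
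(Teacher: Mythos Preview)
Your proof is correct, and your occurrence analysis (reduction to pair~115 via inverse, the column/band argument forcing $\pi_b=\pi_n\mp1$, and uniqueness of each pattern) matches the paper's almost verbatim.

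The divergence is in the final bijection step. The paper simply invokes the swap map from Theorem~\ref{thm-X^2_10} (identity on $\mathcal T(n,0,0)\cup\mathcal T(n,1,1)$, swap the second and third elements of the unique occurrence on $\mathcal T(n,1,0)\cup\mathcal T(n,0,1)$). Taken literally, that map does \emph{not} biject $\mathcal T(n,1,0)$ with $\mathcal T(n,0,1)$ for pair~115: for instance $\pi=12345\in\mathcal T(5,1,0)$ (unique $q_1$-occurrence $345$, no $q_2$ since $\pi_5=n$) is sent to $12354$, which lies in $\mathcal T(5,1,1)$ (it has the $q_1$-occurrence $234$ and the $q_2$-occurrence $354$). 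So the paper's reference is at best imprecise. Your argument sidesteps this: you use the \emph{same} transposition of values $v-1,v$ but apply it uniformly to all permutations with $\pi_n=v$, show it carries $q_1$-containing permutations bijectively to $q_2$-containing ones, and then cancel the common term $T_{n,1,1}$. That reformulation is exactly what is needed to make the swap idea rigorous here, and the alternative direct count you mention also works.
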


\begin{proof}
Observe that pairs~115 and~116 can be obtained from each other by the inverse operation. Hence, it is sufficient to consider the pair~115. Any occurrence of a pattern in the pair in a permutation $\pi=\pi_1\cdots\pi_n$ ends with $\pi_n$. Moreover, $\pi_i\pi_j\pi_n$ is an occurrence of $q_1$ (resp., $q_2$) if and only if $\pi_j=\pi_n-1$ (resp., $\pi_j=\pi_n+1$), and $\pi_i$ is the rightmost element to the left of, and smaller than $\pi_j$, {\em i.e., $i=\max\{s: \pi_s<\pi_j, s<j\}$}. 

    Hence, each permutation contains at most one occurrence of each pattern. The map $f$ can be constructed in the same way as in the proof of Theorem~\ref{thm-X^2_10}  to complete our proof.
\end{proof}

 \section{Concluding remark}\label{sec-con}

In this paper, we proved 112 joint equidistribution results for mesh patterns with minus antipodal shadings. We achieve this by constructing bijections, finding recurrence relations, and generating functions. In fact, computer experiments suggest that pairs 113 to 116 have the same joint distribution as pairs$\{X^{(1)}_i\}_{i=9}^{18}$ and $\{Y^{(1)}_i\}_{i=9}^{18}$, satisfying \eqref{recur-pair 8-17} and \eqref{genera-pair 8 17}; we leave justifying this observation as an open problem. Also, Table~\ref{tab-conj} presents 14 conjectured equidistributions, which we state below as separate conjectures.

\begin{table}[!ht]
 	{
 		\renewcommand{\arraystretch}{1.7}
 		\setlength{\tabcolsep}{5pt}
 \begin{center} 
 		\begin{tabular}{c | c ||  c |  c || c | c || c | c}
 			\hline
 		\footnotesize{Nr.}	& \footnotesize {Patterns} & \footnotesize{Nr.}	& \footnotesize{Patterns}&\footnotesize{Nr.}	& \footnotesize {Patterns} & \footnotesize{Nr.}	& \footnotesize{Patterns} \\
 			\hline
 			\hline
			
			$X^{(4)}_7$ & $\pattern{scale = 0.5}{3}{1/1,2/2,3/3}{0/0,0/1,3/0,2/0,2/1,2/2,2/3,3/1,3/3}\pattern{scale = 0.5}{3}{1/1,2/3,3/2}{0/0,0/1,3/0,2/0,2/1,2/2,2/3,3/1,3/3}$ & 
$X^{(4)}_8$ & $\pattern{scale = 0.5}{3}{1/1,2/2,3/3}{0/0,2/0,2/1,2/3,3/0,0/1,3/1,3/3}\pattern{scale = 0.5}{3}{1/1,2/3,3/2}{0/0,2/0,2/1,2/3,3/0,0/1,3/1,3/3}$ &
$Y^{(4)}_7$ & $\pattern{scale = 0.5}{3}{1/1,2/2,3/3}{0/0,0/2,0/3,1/0,1/2,1/3,2/2,3/2,3/3}\pattern{scale = 0.5}{3}{1/1,2/3,3/2}{0/0,0/2,0/3,1/0,1/2,1/3,2/2,3/2,3/3}$ &
$Y^{(4)}_8$ & $\pattern{scale = 0.5}{3}{1/1,2/2,3/3}{0/0,1/0,1/2,1/3,0/2,0/3,3/2,3/3}\pattern{scale = 0.5}{3}{1/1,2/3,3/2}{0/0,1/0,1/2,1/3,0/2,0/3,3/2,3/3}$\\[5pt]

 \hline

  117 & $\pattern{scale = 0.5}{3}{1/1,2/2,3/3}{0/1,0/2,0/3,1/1,1/2,1/3,2/2,3/2,3/3}\pattern{scale = 0.5}{3}{1/1,2/3,3/2}{0/1,0/2,0/3,1/1,1/2,1/3,2/2,3/2,3/3}$ & 
  118 & $\pattern{scale = 0.5}{3}{1/1,2/2,3/3}{1/0,1/1,2/0,2/1,2/2,2/3,3/0,3/1,3/3}\pattern{scale = 0.5}{3}{1/1,2/3,3/2}{1/0,1/1,2/0,2/1,2/2,2/3,3/0,3/1,3/3}$ & 
  119 & $\pattern{scale = 0.5}{3}{1/1,2/2,3/3}{0/1,0/2,0/3,1/1,1/2,1/3,2/2,2/3}\pattern{scale = 0.5}{3}{1/1,2/3,3/2}{0/1,0/2,0/3,1/1,1/2,1/3,2/2,2/3}$ &
  120 & $\pattern{scale = 0.5}{3}{1/1,2/2,3/3}{1/0,1/1,2/0,2/1,2/2,3/0,3/1,3/2}\pattern{scale = 0.5}{3}{1/1,2/3,3/2}{1/0,1/1,2/0,2/1,2/2,3/0,3/1,3/2}$\\[5pt]
  			\hline
 121 & $\pattern{scale = 0.5}{3}{1/1,2/2,3/3}{0/1,0/2,0/3,1/1,1/2,1/3,3/2,3/3}\pattern{scale = 0.5}{3}{1/1,2/3,3/2}{0/1,0/2,0/3,1/1,1/2,1/3,3/2,3/3}$ & 
  122 &$\pattern{scale = 0.5}{3}{1/1,2/2,3/3}{1/0,1/1,2/0,2/1,2/3,3/0,3/1,3/3}\pattern{scale = 0.5}{3}{1/1,2/3,3/2}{1/0,1/1,2/0,2/1,2/3,3/0,3/1,3/3}$ &
  123 & $\pattern{scale = 0.5}{3}{1/1,2/2,3/3}{0/1,0/3,1/1,1/3,2/0,2/1,2/3,3/3}\pattern{scale = 0.5}{3}{1/1,2/3,3/2}{0/1,0/3,1/1,1/3,2/0,2/1,2/3,3/3}$ &
  124 & $\pattern{scale = 0.5}{3}{1/1,2/2,3/3}{0/2,1/0,1/1,1/2,3/0,3/1,3/2,3/3}\pattern{scale = 0.5}{3}{1/1,2/3,3/2}{0/2,1/0,1/1,1/2,3/0,3/1,3/2,3/3}$\\[5pt]
 			\hline
  125 & $\pattern{scale = 0.5}{3}{1/1,2/2,3/3}{0/1,0/2,1/1,1/2,3/0,3/1,3/2,3/3}\pattern{scale = 0.5}{3}{1/1,2/3,3/2}{0/1,0/2,1/1,1/2,3/0,3/1,3/2,3/3}$ &
  126 & $\pattern{scale = 0.5}{3}{1/1,2/2,3/3}{0/3,1/0,1/1,1/3,2/0,2/1,2/3,3/3}\pattern{scale = 0.5}{3}{1/1,2/3,3/2}{0/3,1/0,1/1,1/3,2/0,2/1,2/3,3/3}$ & & & \\[5pt]
\hline
	\end{tabular}
\end{center} 
}
\vspace{-0.5cm}
 	\caption{Conjectured joint equidistributions. The names of the patterns in row 1 are consistent with the names of patterns of the same type considered in Section~\ref{sec-X_4}.}\label{tab-conj}
\end{table}

\begin{conj}[Pairs $X^{(4)}_7$ and $Y^{(4)}_7$]
We have $\pattern{scale = 0.5}{3}{1/1,2/2,3/3}{0/0,0/1,3/0,2/0,2/1,2/2,2/3,3/1,3/3}\sim_{jd}\hspace{-1mm}\pattern{scale = 0.5}{3}{1/1,2/3,3/2}{0/0,0/1,3/0,2/0,2/1,2/2,2/3,3/1,3/3}$ and $\pattern{scale = 0.5}{3}{1/1,2/2,3/3}{0/0,0/2,0/3,1/0,1/2,1/3,2/2,3/2,3/3}\sim_{jd}\hspace{-1mm}\pattern{scale = 0.5}{3}{1/1,2/3,3/2}{0/0,0/2,0/3,1/0,1/2,1/3,2/2,3/2,3/3}$.
\end{conj}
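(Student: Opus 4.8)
The plan is to reduce the whole statement to a symmetry property of a pair of statistics on permutations, in the same spirit as the treatment of $X^{(4)}_6$ in the excerpt. First, exactly as for every other $Y$-type shading considered in the paper, $Y^{(4)}_7$ is obtained from $X^{(4)}_7$ by the inverse operation (its shading is the transpose of the $X^{(4)}_7$ shading, and $123,132$ are involutions), so it suffices to prove $q_1\sim_{jd}q_2$ for $X^{(4)}_7$.

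For $X^{(4)}_7$ I would first read the occurrence structure off the shaded boxes. In an occurrence $x\,a\,b$ of $q_1$ (resp.\ $q_2$) in $\pi=\pi_1\cdots\pi_n$: the boxes $(2,0),(2,1),(2,2),(2,3)$ force $a,b$ to be consecutive entries $\pi_p,\pi_{p+1}$; the boxes $(3,0),(3,1),(3,3)$ (note that $(3,2)$ is \emph{not} shaded) force every entry to the right of position $p+1$ to have value strictly between $\pi_p$ and $\pi_{p+1}$, so that $\{\pi_p,\pi_{p+1}\}$ is precisely $\{\min,\max\}$ of the suffix $\pi_p\cdots\pi_n$, with $\pi_p<\pi_{p+1}$ for $q_1$ and $\pi_p>\pi_{p+1}$ for $q_2$; and the boxes $(0,0),(0,1)$ force $x$ to be the leftmost entry of $\pi$ whose value lies below $\min(\pi_p,\pi_{p+1})$. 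Consequently each admissible adjacent pair $\pi_p\pi_{p+1}$ extends to \emph{at most one} occurrence, and to exactly one precisely when some entry smaller than $\min(\pi_p,\pi_{p+1})$ lies to the left of position $p$.

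Next I would localise this relative to the position $j$ of the value $1$. Since $1$ is the global minimum, no admissible pair can sit at or before position $j$, and for every admissible pair to the right of position $j$ the required small entry $x$ exists automatically (take $x=1$). Hence the numbers of occurrences of $q_1$ and of $q_2$ in $\pi$ depend only on the standardisation $\bar w\in S_{n-j}$ of $\pi_{j+1}\cdots\pi_n$, while $\pi_1\cdots\pi_{j-1}$ may be any arrangement of any $(j-1)$-subset of $\{2,\dots,n\}$. Summing over $j$ yields
\[
T_n(x,y)=(n-1)!\sum_{m=0}^{n-1}\frac{G_m(x,y)}{m!},\qquad
G_m(x,y):=\sum_{\sigma\in S_m}x^{\,\mathrm{stat}_1(\sigma)}\,y^{\,\mathrm{stat}_2(\sigma)},
\]
where $\mathrm{stat}_1(\sigma)$ counts positions $p$ with $\sigma_p$ a right-to-left minimum and $\sigma_{p+1}$ a right-to-left maximum of $\sigma$, and $\mathrm{stat}_2(\sigma)$ counts positions $p$ with $\sigma_p$ a right-to-left maximum and $\sigma_{p+1}$ a right-to-left minimum. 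It therefore suffices to show $G_m(x,y)=G_m(y,x)$ for all $m$.

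Finally, this symmetry follows from a single involution: the complement $\sigma\mapsto\sigma^c$ turns a right-to-left minimum at a position into a right-to-left maximum at the same position and vice versa, so it interchanges $\mathrm{stat}_1$ and $\mathrm{stat}_2$; hence $G_m$, and therefore $T_n$, is symmetric in $x$ and $y$, which is exactly $q_1\sim_{jd}q_2$. I expect the only genuinely delicate part to be the occurrence analysis of the second paragraph: one must be scrupulous that box $(0,1)$ makes the entry $x$ unique, that the unshaded box $(3,2)$ is precisely what permits a nontrivial nested tail (and hence several occurrences in one permutation, which is presumably why this pair was left as a conjecture), and that no occurrence can straddle or lie to the left of the position of the value $1$.
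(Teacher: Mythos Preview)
The paper does \emph{not} prove this statement: it is listed as a conjecture in Section~\ref{sec-con} (Table~\ref{tab-conj}), precisely because the authors were unable to handle it with the swapping/complementation/recurrence techniques used elsewhere. So there is no proof in the paper to compare against, and your proposal, if correct, would in fact resolve an open problem left by the authors.

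Your argument appears to be correct. The occurrence analysis in the second paragraph checks out: the column of shaded boxes $(2,0),(2,1),(2,2),(2,3)$ forces adjacency of $a,b$; the shaded boxes $(3,0),(3,1),(3,3)$ together with the unshaded $(3,2)$ force every entry to the right of position $p+1$ to lie strictly between $\pi_p$ and $\pi_{p+1}$, which is equivalent to $\{\pi_p,\pi_{p+1}\}=\{\min,\max\}$ of the suffix $\pi_p\cdots\pi_n$; and the shaded boxes $(0,0),(0,1)$ force $x$ to be the leftmost entry with value below $\min(\pi_p,\pi_{p+1})$, so that for each admissible pair there is at most one occurrence, and exactly one when such an entry exists to the left of position $p$. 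Your localisation at the position $j$ of the value $1$ is also sound: no admissible pair can have $p\le j$ (if $p+1<j$ then $1$ lies to the right and violates the suffix bound; if $p\in\{j-1,j\}$ then $\min=1$ and no smaller $x$ exists), while for $p>j$ the witness $x$ is automatic since $1$ is to the left and all values smaller than $\min(\pi_p,\pi_{p+1})$ necessarily lie at positions $<p$. The resulting factorisation $T_n(x,y)=(n-1)!\sum_{m=0}^{n-1}G_m(x,y)/m!$ is then immediate, and the observation that complementation on $S_m$ exchanges right-to-left minima with right-to-left maxima at each position gives $G_m(x,y)=G_m(y,x)$ directly. The reduction from $Y^{(4)}_7$ to $X^{(4)}_7$ via the inverse is routine and consistent with the paper's conventions.

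In short: you have found a clean bijective proof (complement on the suffix after the position of $1$) that settles a case the paper explicitly leaves open. The reason the paper's general $X^{(4)}$ machinery fails here is exactly the one you identify---the unshaded box $(3,2)$ allows arbitrarily many occurrences in a single permutation, so the ``at most one occurrence of each pattern, swap $a$ and $b$'' argument of Theorems~\ref{thm-X^2_10} and \ref{thm-X^4-1} does not apply---and your decomposition via the position of $1$ neatly circumvents this.
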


\begin{conj}[Pairs $X^{(4)}_8$ and $Y^{(4)}_8$]
We have$\pattern{scale = 0.5}{3}{1/1,2/2,3/3}{0/0,2/0,2/1,2/3,3/0,0/1,3/1,3/3}\sim_{jd}\hspace{-1mm}\pattern{scale = 0.5}{3}{1/1,2/3,3/2}{0/0,2/0,2/1,2/3,3/0,0/1,3/1,3/3}$ and $\pattern{scale = 0.5}{3}{1/1,2/2,3/3}{0/0,1/0,1/2,1/3,0/2,0/3,3/2,3/3}\sim_{jd}\hspace{-1mm}\pattern{scale = 0.5}{3}{1/1,2/3,3/2}{0/0,1/0,1/2,1/3,0/2,0/3,3/2,3/3}$.
\end{conj}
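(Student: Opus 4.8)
The plan is to begin with the inverse operation, exactly as the paper does for every $X^{(i)}$/$Y^{(i)}$ pair: the shading of $Y^{(4)}_8$ is the transpose of the shading of $X^{(4)}_8$, and the patterns $123,132$ are invariant under the group-theoretic inverse, so any bijection $f$ settling $X^{(4)}_8$ gives $\pi\mapsto (f(\pi^i))^i$ for $Y^{(4)}_8$. Hence it suffices to treat the pair $q_1=\pattern{scale = 0.5}{3}{1/1,2/2,3/3}{0/0,2/0,2/1,2/3,3/0,0/1,3/1,3/3}$ and $q_2=\pattern{scale = 0.5}{3}{1/1,2/3,3/2}{0/0,2/0,2/1,2/3,3/0,0/1,3/1,3/3}$ of $X^{(4)}_8$.

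Next I would read off when a subsequence $\pi_a\pi_b\pi_c$ (with $a<b<c$) is an occurrence of $q_1$ or $q_2$, using the left-to-right minima $x_1>\cdots>x_t$ from the start of Section~\ref{sec-X_4}. For $q_1$: the shaded boxes $(2,0),(2,1),(3,0),(3,1)$ force $\pi_b$ to be a \emph{suffix minimum} of $\pi$ (smaller than everything to its right); boxes $(2,3),(3,3)$ force $\pi_c=\max\{\pi_j:j>b\}$; and boxes $(0,0),(0,1)$ force $\pi_a$ to be the unique left-to-right minimum $x_i$ with $x_i<\pi_b<x_{i-1}$, so in particular $\pi_b$ must not itself be a left-to-right minimum. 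Dually, for $q_2$: $\pi_b$ must be a \emph{suffix maximum}, $\pi_c=\min\{\pi_j:j>b\}$, and $\pi_a=x_i$ with $x_i<\min\{\pi_j:j>b\}<x_{i-1}$. A short lemma then says that $\min\{\pi_j:j>b\}$ is a left-to-right minimum of $\pi$ exactly when the value $1$ lies strictly to the right of position $b$; likewise a suffix minimum $\pi_b$ is a left-to-right minimum iff $\pi_b=1$. Writing $p$ for the position of $1$, this turns both statistics into the same shape: the number of occurrences of $q_1$ in $\pi$ equals the number of suffix minima of $\pi$ at positions $p<b<n$, and the number of occurrences of $q_2$ equals the number of suffix maxima of $\pi$ at positions $p<b<n$.

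With the two statistics in this form, the bijection is transparent: let $f(\pi)$ keep $\pi_1\cdots\pi_p$ and replace $\pi_{p+1}\cdots\pi_n$ by its complement within the value set $\{\pi_{p+1},\dots,\pi_n\}$. Then $f$ is an involution, it fixes the position of $1$ and the set of values used after it, and it sends the word $\pi_{p+1}\cdots\pi_n$ to its complement, interchanging that word's suffix minima with its suffix maxima. Consequently $f$ maps $\T(n,k,\ell)$ onto $\T(n,\ell,k)$, which proves $q_1\sim_{jd}q_2$ for $X^{(4)}_8$ and, by the first step, for $Y^{(4)}_8$. In fact this argument shows that $T_n(x,y)$ depends only on $p$ and on the pattern of $\pi_{p+1}\cdots\pi_n$, so $T_n(x,y)=T_n(y,x)$ is manifest and one even obtains an explicit formula for $T_n(x,y)$ from the joint distribution of suffix minima and suffix maxima over permutations.

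I expect the whole difficulty to sit in the second step. The routine-but-delicate points are: checking that the two shaded boxes sitting above the middle and last entries genuinely collapse to a single ``suffix-extremal'' condition on $\pi_b$; checking that the two column-$0$ boxes collapse to the left-to-right-minimum sandwich $x_i<\pi_b<x_{i-1}$; and, above all, proving the equivalence ``$\min\{\pi_j:j>b\}$ is not a left-to-right minimum'' $\Longleftrightarrow$ ``$1$ occurs before position $b$'', while correctly dispatching the boundary cases $p=1$ and $p=n$. Once those equivalences are pinned down, the counting identities and the involution $f$ follow with no further computation.
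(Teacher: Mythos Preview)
The paper does \emph{not} prove this statement: it is listed in Table~\ref{tab-conj} as one of the 14 conjectured joint equidistributions, and is recorded explicitly as a \texttt{conj} environment. So there is no ``paper's own proof'' to compare against. What you have written is in fact a proof of the conjecture, and it appears to be correct.

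Your reading of the shaded boxes is accurate. For $q_1$, boxes $(2,0),(2,1),(3,0),(3,1)$ force $\pi_b$ to be a right-to-left minimum, boxes $(2,3),(3,3)$ force $\pi_c=\max\{\pi_j:j>b\}$, and boxes $(0,0),(0,1)$ force $\pi_a$ to be the unique left-to-right minimum $x_i$ with $x_i<\pi_b<x_{i-1}$; since every left-to-right minimum lies at a position $\le p$ (the position of $1$), this $x_i$ is automatically to the left of $\pi_b$ once $b>p$. The dual analysis for $q_2$ is likewise correct, and your key lemma---that $\min\{\pi_j:j>b\}$ is a left-to-right minimum of $\pi$ if and only if $1$ lies to the right of position $b$---is immediate (if $1$ is to the right of $b$ then the minimum is $1$; if $1$ is to the left of $b$ then $1$ precedes the minimum and is smaller). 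This reduces both counts to the number of right-to-left minima, respectively maxima, strictly between positions $p$ and $n$. Your involution ``complement the suffix after position $p$ on its own value set'' preserves $p$, preserves the value set of the suffix, and swaps right-to-left minima with right-to-left maxima at every position $>p$, hence exchanges the two statistics as required.

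Two small points worth tightening when you write it out in full: (i) make explicit that every left-to-right minimum occurs at a position $\le p$, which is what guarantees $a<b$ in both cases; and (ii) for $q_2$, note that when $b=p$ we have $\pi_b=1$, which for $b<n$ cannot be a right-to-left maximum, so the range $p<b<n$ is indeed exact rather than $p\le b<n$. With those details filled in, your argument settles the conjecture for $X^{(4)}_8$ and $Y^{(4)}_8$.
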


\begin{conj}[Pairs 117 and 118]
   We have $\pattern{scale = 0.5}{3}{1/1,2/2,3/3}{0/1,0/2,0/3,1/1,1/2,1/3,2/2,3/2,3/3}\sim_{jd}\hspace{-1mm}\pattern{scale = 0.5}{3}{1/1,2/3,3/2}{0/1,0/2,0/3,1/1,1/2,1/3,2/2,3/2,3/3}$ and $\pattern{scale = 0.5}{3}{1/1,2/2,3/3}{1/0,1/1,2/0,2/1,2/2,2/3,3/0,3/1,3/3}\sim_{jd}\hspace{-1mm}\pattern{scale = 0.5}{3}{1/1,2/3,3/2}{1/0,1/1,2/0,2/1,2/2,2/3,3/0,3/1,3/3}$, and these two pairs have the same joint distribution as $\{X^{(1)}_i\}_{i=5}^{8}$ and $\{Y^{(1)}_i\}_{i=5}^{8}$.
   \end{conj}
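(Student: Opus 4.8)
The plan is to reduce Conjecture (Pairs 117 and 118) to a statement about length-2 mesh patterns, exactly as in the earlier sections. First I would observe that pair 118 is obtained from pair 117 by the inverse operation (up to the reversal that swaps $q_1$ and $q_2$ appropriately), so it suffices to treat pair 117. The shading of pair 117 has the bottom-left box $(0,0)$ unshaded but the entire bottom row minus that box, together with the first column above the dot, structured so that—following the pattern of Lemma~\ref{reduction-lemma}—each occurrence of $q_1$ or $q_2$ must begin with $\pi_1$. Setting $A=\{\pi_i:\pi_i>\pi_1\}$ and $B=\{\pi_i:\pi_i<\pi_1\}$, I would argue that the joint distribution of $q_1$ and $q_2$ on $S_n$ is governed entirely by the behaviour of the induced length-2 patterns $p_1,p_2$ (obtained by deleting the bottom row and leftmost column) on the subword formed by $A$, while $B$ ranges freely. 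This is the same reduction engine used to prove Theorem~\ref{coro-pairs-17-36}.

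Next I would identify the resulting length-2 pair $\{p_1,p_2\}$ and show it satisfies the recurrence $H_n(x,y)=(n+x+y-2)H_{n-1}(x,y)$, or one of its variants appearing in Lemma~\ref{lem-pairs-17-36}, and hence $H_{n,k,\ell}=\binom{k+\ell}{k}c(n-1,k+\ell)$. Concretely, I expect $p_1$ and $p_2$ here to be length-2 mesh patterns for which an occurrence is pinned to an extreme position (immediately after the last entry, say, creating an occurrence of $p_1$, and immediately before the last entry creating $p_2$), with all interior insertions of the largest element $n$ being neutral. If so, the argument is verbatim that of case (i) or (iii) of Lemma~\ref{lem-pairs-17-36}. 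Then, feeding $H_i(x,y)$ into the summation
\[
T_n(x,y)=2(n-1)!+\sum_{i=2}^{n-1}\binom{n-1}{i}(n-i-1)!\,H_i(x,y),
\]
as in the proof of Theorem~\ref{coro-pairs-17-36}, yields that pair 117 satisfies~\eqref{recur-pair 8-17} and~\eqref{genera-pair 8 17}, giving both the equidistribution $p_1\sim_{jd}p_2$ (hence $q_1\sim_{jd}q_2$ for pair 117, hence for pair 118) and the claimed coincidence of joint distributions.

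However, the final sentence of the conjecture asserts that pairs 117 and 118 have the same joint distribution as $\{X^{(1)}_i\}_{i=5}^{8}$ and $\{Y^{(1)}_i\}_{i=5}^{8}$, and here is the subtlety: the pairs $\{X^{(1)}_i\}_{i=5}^{8}$ were only shown (in Subsection~\ref{subsec-X_1-1} and the surrounding discussion) to be Wilf-equivalent to $\{X^{(1)}_i\}_{i=9}^{18}$, not jointly equidistributed with them; indeed the computer data only suggested $\{X^{(1)}_i\}_{i=1}^{18}$ are Wilf-equivalent, with the stronger joint-distribution identity~\eqref{recur-pair 8-17} established only for $i\ge 9$. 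So to close the conjecture I would need, separately, to compute the joint distribution of the length-2 pair underlying $X^{(1)}_5$ and verify it also satisfies~\eqref{recur-pairs-len 2 8-17}, i.e.\ $H_{n,k,\ell}=\binom{k+\ell}{k}c(n-1,k+\ell)$—which would actually upgrade $\{X^{(1)}_i\}_{i=5}^{8}$ from merely Wilf-equivalent to jointly equidistributed with $\{X^{(1)}_i\}_{i=9}^{18}$. This is the main obstacle: unlike cases (i)--(iii) of Lemma~\ref{lem-pairs-17-36}, the $X^{(1)}_5$ length-2 shading may not admit a clean one-step insertion recurrence, because an occurrence of its $p_1$ or $p_2$ need not be pinned to an extreme position, so a more delicate decomposition (perhaps splitting on the position or value of the largest element, or a direct bijective argument in the spirit of Theorem~\ref{thm-pair-Ding-all}) would be required. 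If that decomposition does go through with the same recurrence, the two halves of the conjecture merge; if not, one at least obtains the equidistribution $q_1\sim_{jd}q_2$ for pairs 117 and 118 unconditionally, leaving only the cross-identification with $\{X^{(1)}_i\}_{i=5}^8$ open.
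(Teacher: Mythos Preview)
First, note that this statement is a \emph{conjecture} in the paper; there is no proof in the paper to compare against. So the question is whether your plan would actually settle it.

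Your core reduction step is incorrect. You claim that for pair~117 ``each occurrence of $q_1$ or $q_2$ must begin with $\pi_1$,'' invoking Lemma~\ref{reduction-lemma}. But that lemma requires the \emph{entire} leftmost column $(0,0),(0,1),(0,2),(0,3)$ to be shaded, whereas in pair~117 the box $(0,0)$ is unshaded (and, contrary to your description, the whole bottom row $(0,0),(1,0),(2,0),(3,0)$ is unshaded as well). Having $(0,1),(0,2),(0,3)$ shaded with $(0,0)$ unshaded only forces the first element of an occurrence to be a \emph{left-to-right maximum}, not $\pi_1$. Hence you cannot simply peel off $\pi_1$ and work on $A=\{\pi_i:\pi_i>\pi_1\}$; occurrences may begin at any left-to-right maximum, and the resulting block structure is closer in spirit to (but not covered by) the analyses in Sections~\ref{sec-X_2 Y_2}--\ref{sec-X_4} than to Lemma~\ref{reduction-lemma}. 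The summation $T_n(x,y)=2(n-1)!+\sum_i\binom{n-1}{i}(n-i-1)!\,H_i(x,y)$ that you quote from the proof of Theorem~\ref{coro-pairs-17-36} is therefore not justified here.

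You are right, however, to flag the second obstacle: the target is the joint distribution of $\{X^{(1)}_i\}_{i=5}^{8}$, for which the paper only proves Wilf-equivalence with $\{X^{(1)}_i\}_{i=9}^{18}$ (the Corollary after Theorem~\ref{coro-pairs-17-36}), \emph{not} equality of joint distributions. That the paper states a separate Conjecture~4 identifying pairs 119--126 with $\{X^{(1)}_i\}_{i=9}^{18}$ strongly suggests the joint distribution of $\{X^{(1)}_i\}_{i=5}^{8}$ is genuinely different; so aiming for~\eqref{recur-pair 8-17} is likely the wrong target for pair~117 in any case.
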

   
   \begin{conj}[Pairs 119 to 126]
      We have $\pattern{scale = 0.5}{3}{1/1,2/2,3/3}{0/1,0/2,0/3,1/1,1/2,1/3,2/2,2/3}\sim_{jd}\hspace{-1mm}\pattern{scale = 0.5}{3}{1/1,2/3,3/2}{0/1,0/2,0/3,1/1,1/2,1/3,2/2,2/3}$,
  $\pattern{scale = 0.5}{3}{1/1,2/2,3/3}{1/0,1/1,2/0,2/1,2/2,3/0,3/1,3/2}\sim_{jd}\hspace{-1mm}\pattern{scale = 0.5}{3}{1/1,2/3,3/2}{1/0,1/1,2/0,2/1,2/2,3/0,3/1,3/2}$, $\pattern{scale = 0.5}{3}{1/1,2/2,3/3}{0/1,0/2,0/3,1/1,1/2,1/3,3/2,3/3}\sim_{jd}\hspace{-1mm}\pattern{scale = 0.5}{3}{1/1,2/3,3/2}{0/1,0/2,0/3,1/1,1/2,1/3,3/2,3/3}$, $\pattern{scale = 0.5}{3}{1/1,2/2,3/3}{1/0,1/1,2/0,2/1,2/3,3/0,3/1,3/3}\sim_{jd}\hspace{-1mm}\pattern{scale = 0.5}{3}{1/1,2/3,3/2}{1/0,1/1,2/0,2/1,2/3,3/0,3/1,3/3}$, $\pattern{scale = 0.5}{3}{1/1,2/2,3/3}{0/1,0/3,1/1,1/3,2/0,2/1,2/3,3/3}\sim_{jd}\hspace{-1mm}\pattern{scale = 0.5}{3}{1/1,2/3,3/2}{0/1,0/3,1/1,1/3,2/0,2/1,2/3,3/3}$, $\pattern{scale = 0.5}{3}{1/1,2/2,3/3}{0/2,1/0,1/1,1/2,3/0,3/1,3/2,3/3}\sim_{jd}\hspace{-1mm}\pattern{scale = 0.5}{3}{1/1,2/3,3/2}{0/2,1/0,1/1,1/2,3/0,3/1,3/2,3/3}$, $\pattern{scale = 0.5}{3}{1/1,2/2,3/3}{0/1,0/2,1/1,1/2,3/0,3/1,3/2,3/3}\sim_{jd}\hspace{-1mm}\pattern{scale = 0.5}{3}{1/1,2/3,3/2}{0/1,0/2,1/1,1/2,3/0,3/1,3/2,3/3}$, and $\pattern{scale = 0.5}{3}{1/1,2/2,3/3}{0/3,1/0,1/1,1/3,2/0,2/1,2/3,3/3}\sim_{jd}\hspace{-1mm}\pattern{scale = 0.5}{3}{1/1,2/3,3/2}{0/3,1/0,1/1,1/3,2/0,2/1,2/3,3/3}$. Moreover, these eight pairs have the same joint distribution as $\{X^{(1)}_i\}_{i=9}^{18}$ and $\{Y^{(1)}_i\}_{i=9}^{18}$.
 \end{conj}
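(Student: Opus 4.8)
The plan is to first note that within each of the four couples $\{119,120\}$, $\{121,122\}$, $\{123,124\}$, $\{125,126\}$ the two shadings are transposes of one another across the main diagonal, hence are interchanged by the inverse operation, which --- as recalled in the introduction --- leaves each of the patterns $123$ and $132$ invariant. Consequently the map $\pi\mapsto\pi^{-1}$ shows that the two pairs in each couple have identical joint distributions, and it suffices to treat the four pairs $119$, $121$, $123$, $125$. Moreover, by Theorem~\ref{coro-pairs-17-36} the target distribution \eqref{genera-pair 8 17} is the unique solution of the first-order recurrence
\[T_n(x,y)=(n+x+y-2)\,T_{n-1}(x,y)+(2-x-y)\,(n-2)!\]
(that is, of \eqref{genera-box-4}) subject to $T_1(x,y)=1$, equivalently $T_2(x,y)=2$; so for each of the four pairs it is enough to verify this recurrence. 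Since \eqref{genera-pair 8 17} is symmetric in $x$ and $y$, establishing it simultaneously yields $q_1\sim_{jd}q_2$ and the ``moreover'' part, so no separate bijection for the joint equidistribution is needed.

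\textbf{Structure of occurrences.} Next I would read off from each shading where occurrences can sit. For $119$ and $121$ the three shaded left-hand columns force the first two points of any occurrence to be consecutive left-to-right maxima of $\pi$; the third point is a further left-to-right maximum (for $119$) or the rightmost entry exceeding the second point (for $121$), so in both cases an occurrence of $q_1$ is governed by a triple of consecutive left-to-right maxima, and an occurrence of $q_2$ by such a consecutive pair followed by an appropriate smaller entry. For $125$ the fully shaded rightmost column forces every occurrence to end at position $n$; writing $v=\pi_n$, the number of occurrences of $q_1$ is one less than the number of left-to-right maxima of the subword of entries below $v$, while the number of occurrences of $q_2$ is the number of left-to-right minima of the subword of entries above $v$ that lie to the right of the leftmost entry below $v$. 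Pattern $123$ is the most delicate: its occurrences are pinned on the left by a consecutive pair of left-to-right maxima and on the right by a suffix maximum.

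\textbf{Establishing the recurrence.} With these descriptions in hand I would build $n$-permutations from $(n-1)$-permutations by inserting the largest value $n$ (the natural generator for $119$, $121$, $123$, whose occurrences are controlled by left-to-right maxima) or by conditioning on $\pi_n=v$ and reshuffling the two subwords of entries below and above $v$ (for $125$), finishing in the latter case with the Chu--Vandermonde type Stirling-number identity already used in Lemma~\ref{lem-pairs-17-36}(ii). In each case one isolates the $n-2$ insertion slots that preserve both occurrence counts, contributing $(n-2)T_{n-1}(x,y)$, from the two critical slots that each create one additional occurrence of $q_1$, respectively $q_2$, on a cofinite family of permutations; the small exceptional family, of size $(n-2)!$, is precisely what produces the correction term $(2-x-y)(n-2)!$, exactly as in the derivation of \eqref{recur-T1-pair 18}--\eqref{recur-T2-pair 18}. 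After matching the initial values, this should collapse to \eqref{genera-box-4} in all four cases.

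\textbf{Main obstacle.} Unlike the cases already handled, these shadings do not split the statistics ``number of $q_1$'s'' and ``number of $q_2$'s'' into functions of disjoint portions of $\pi$: for $125$, for instance, the $q_2$-count depends on how the two subwords interleave (through the position of the leftmost entry below $\pi_n$), so the generating function does not factor and one must carry an auxiliary statistic --- such as the length of the initial run of entries above $\pi_n$ --- through the recurrence. Carrying out this refined bookkeeping correctly for all four shadings, and in particular pinning down the exceptional families precisely enough that the correction term is exactly $(2-x-y)(n-2)!$, is where essentially all of the difficulty lies; it is quite possible that one of the four (most plausibly $123$, whose occurrences are anchored at both ends) resists this scheme and instead needs a direct bijection onto one of the pairs $\{X^{(1)}_i\}_{i=9}^{18}$, which would explain why the statement is only conjectured.
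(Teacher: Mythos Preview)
The paper does not prove this statement: it is explicitly labeled a \emph{Conjecture} in Section~\ref{sec-con} and is listed among the ``14 conjectured joint equidistributions'' in Table~\ref{tab-conj}. There is therefore no proof in the paper to compare your proposal against.

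Your proposal is not a proof either, and you say as much: it is an outline of a strategy (reduce by inverse to four pairs, then try to verify the recurrence \eqref{genera-box-4} for each), together with an honest admission that the bookkeeping for pairs $123$ and $125$ may not close up. The reduction step is correct --- each of the couples $\{119,120\}$, $\{121,122\}$, $\{123,124\}$, $\{125,126\}$ is indeed a transpose pair, so $\pi\mapsto\pi^{-1}$ identifies their joint distributions --- and the idea of targeting \eqref{genera-box-4} is natural given how $X^{(4)}_6$ is handled in Subsection~\ref{subsec-X_4-2}. But the heart of the argument, namely the case analysis showing that insertion of $n$ produces exactly the contributions $(n-2)T_{n-1}$, $xT_{n-1}$, $yT_{n-1}$ up to a correction of size $(n-2)!$, is only asserted, not carried out; and for shadings like $125$, where (as you note) the $q_2$-count depends on the interleaving of the two subwords rather than on either one alone, there is no a priori reason the correction term should collapse so cleanly. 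Until that analysis is actually performed for each of the four pairs, what you have written is a plausible plan of attack on an open problem, not a proof.
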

 
Although the subject of our study in this paper is the minus antipodal shadings of mesh patterns, we conclude by presenting in Table~\ref{tab-extend} non-minus-antipodal (and non-symmetric) shadings and their types that give joint equidistributions for the mesh patterns 123 and 132. Each of these results can be easily justified through the complement and/or reverse operations, and can essentially be found in one of the Subsections~\ref{subsec-X_1-1}, \ref{subsec-X_2-1}, and \ref{subsec-X_3-1}; hence, we omit the proofs.
 
 \begin{table}[!ht]
 	{
 		\renewcommand{\arraystretch}{1.2}
 		\setlength{\tabcolsep}{5pt}
 \begin{center} 
 		\begin{tabular}{|c |c | c|}
 			\hline
 		\footnotesize{Type}	 & \footnotesize{Form}	& {\footnotesize Mesh pattern $p$}   \\
 			\hline
 			\hline

\multirow{6.5}{1.7cm}{$X^{(1)}$/$Y^{(1)}$}  & 
\multirow{6.2}{1.9cm}{\raisebox{0.5ex}{\begin{tikzpicture}[scale = 0.2, baseline=(current bounding box.center)]
	\foreach \x/\y in {0/0,0/1,0/2,0/3}		
		\fill[gray!50] (\x,\y) rectangle +(1,1); 	
        \draw (0,1)--(4,1);
        \draw (1,0)--(1,4);
        \draw (0,2)--(1,2);
         \draw (0,3)--(1,3);
         \draw (2,0)--(2,1);
         \draw (3,0)--(3,1);
 			\node  at (2.6,2.6) {$p$};
 			\filldraw (1,1) circle (4pt) node[above left] {};
 			\end{tikzpicture}}/\raisebox{0.5ex}{\begin{tikzpicture}[scale = 0.2, baseline=(current bounding box.center)]
	\foreach \x/\y in {0/0,1/0,2/0,3/0}		
		\fill[gray!50] (\x,\y) rectangle +(1,1); 	
        \draw (0,1)--(4,1);
        \draw (1,0)--(1,4);
        \draw (0,2)--(1,2);
         \draw (0,3)--(1,3);
         \draw (2,0)--(2,1);
         \draw (3,0)--(3,1);
 			\node  at (2.6,2.6) {$p$};
 			\filldraw (1,1) circle (4pt) node[above left] {};
 			\end{tikzpicture}}} &  $\pattern{scale = 0.4}{2}{}{}$
 $\pattern{scale = 0.4}{2}{}{0/1}$
 $\pattern{scale =0.4}{2}{}{1/1}$
 $\pattern{scale =0.4}{2}{}{2/1}$
 $\pattern{scale = 0.4}{2}{}{0/1,1/1}$
 $\pattern{scale =0.4}{2}{}{0/1,2/1}$
 $\pattern{scale =0.4}{2}{}{1/1,2/1}$
 $\pattern{scale =0.4}{2}{}{0/0,0/2}$
 $\pattern{scale =0.4}{2}{}{1/0,1/2}$
 $\pattern{scale =0.4}{2}{}{2/0,2/2}$
 $\pattern{scale =0.4}{2}{}{0/0,0/1,0/2}$
 $\pattern{scale =0.4}{2}{}{0/0,1/1,0/2}$
 $\pattern{scale =0.4}{2}{}{0/0,0/2,2/1}$
 $\pattern{scale =0.4}{2}{}{1/0,1/2,0/1}$
 $\pattern{scale =0.4}{2}{}{1/0,1/2,1/1}$
 $\pattern{scale =0.4}{2}{}{1/0,1/2,2/1}$\\
&& $\pattern{scale =0.4}{2}{}{2/0,2/2,0/1}$
 $\pattern{scale =0.4}{2}{}{2/0,2/2,1/1}$
 $\pattern{scale =0.4}{2}{}{2/0,2/2,2/1}$
  $\pattern{scale =0.4}{2}{}{0/1,1/1,2/1}$ $\pattern{scale =0.4}{2}{}{0/0,0/2,2/0,2/2}$
 $\pattern{scale =0.4}{2}{}{0/0,0/1,0/2,1/1}$
 $\pattern{scale =0.4}{2}{}{0/0,0/2,1/1,2/1}$
 $\pattern{scale =0.4}{2}{}{1/0,1/1,1/2,0/1}$
 $\pattern{scale =0.4}{2}{}{0/1,1/0,1/2,2/1}$
 $\pattern{scale =0.4}{2}{}{1/0,1/1,1/2,2/1}$
 $\pattern{scale =0.4}{2}{}{0/1,1/1,2/0,2/2}$
 $\pattern{scale =0.4}{2}{}{1/1,2/0,2/1,2/2}$  $\pattern{scale =0.4}{2}{}{0/1,1/0,1/1,1/2,2/1}$  
$\pattern{scale =0.4}{2}{}{0/1,0/0,0/2,1/0,1/2}$
$\pattern{scale =0.4}{2}{}{0/0,0/2,1/0,1/2,2/1}$
$\pattern{scale =0.4}{2}{}{0/0,0/1,0/2,2/0,2/2}$\\
&&
$\pattern{scale =0.4}{2}{}{0/0,0/2,1/1,2/0,2/2}$
$\pattern{scale =0.4}{2}{}{0/0,0/2,2/0,2/1,2/2}$
$\pattern{scale =0.4}{2}{}{0/1,1/0,1/2,2/0,2/2}$
$\pattern{scale =0.4}{2}{}{1/0,1/2,2/0,2/1,2/2}$
$\pattern{scale=0.4}{2}{}{0/0,0/2,1/0,1/2,2/0,2/2}$
$\pattern{scale =0.4}{2}{}{0/0,0/1,0/2,1/0,1/1,1/2}$
$\pattern{scale =0.4}{2}{}{0/0,0/1,0/2,1/0,1/2,2/1}$
$\pattern{scale =0.4}{2}{}{0/0,0/2,1/0,1/1,1/2,2/1}$
$\pattern{scale =0.4}{2}{}{0/0,0/1,0/2,1/1,2/0,2/2}$
$\pattern{scale =0.4}{2}{}{0/0,0/1,0/2,2/0,2/1,2/2}$
$\pattern{scale =0.4}{2}{}{0/0,0/2,1/1,2/0,2/1,2/2}$
$\pattern{scale =0.4}{2}{}{0/1,1/0,1/1,1/2,2/0,2/2}$
$\pattern{scale =0.4}{2}{}{0/1,1/0,1/2,2/0,2/1,2/2}$
$\pattern{scale =0.4}{2}{}{1/0,1/1,1/2,2/0,2/1,2/2}$
$\pattern{scale =0.4}{2}{}{0/0,0/1,0/2,1/0,1/1,1/2,2/1}$
$\pattern{scale =0.4}{2}{}{0/0,0/1,0/2,1/1,2/0,2/1,2/2}$\\
&&
$\pattern{scale =0.4}{2}{}{0/1,1/0,1/1,1/2,2/0,2/1,2/2}$
$\pattern{scale =0.4}{2}{}{0/0,0/1,0/2,1/0,1/2,2/0,2/2}$
$\pattern{scale =0.4}{2}{}{0/0,0/2,1/0,1/1,1/2,2/0,2/2}$
$\pattern{scale =0.4}{2}{}{0/0,0/2,1/0,1/2,2/0,2/1,2/2}$
$\pattern{scale =0.4}{2}{}{0/0,0/1,0/2,1/0,1/1,1/2,2/0,2/2}$
$\pattern{scale =0.4}{2}{}{0/0,0/1,0/2,1/0,1/2,2/0,2/1,2/2}$
$\pattern{scale =0.4}{2}{}{0/0,0/2,1/0,1/1,1/2,2/0,2/1,2/2}$
$\pattern{scale =0.4}{2}{}{0/0,0/1,0/2,1/0,1/1,1/2,2/1,2/0,2/2}$
 \\
  && 
 $\pattern{scale = 0.4}{2}{}{1/2}$
 $\pattern{scale =0.4}{2}{}{1/0}$
 $\pattern{scale = 0.4}{2}{}{1/1,1/2}$
 $\pattern{scale =0.4}{2}{}{1/2,1/0}$
 $\pattern{scale =0.4}{2}{}{1/0,1/1}$
 $\pattern{scale =0.4}{2}{}{0/2,2/2}$
 $\pattern{scale =0.4}{2}{}{0/1,2/1}$
 $\pattern{scale =0.4}{2}{}{0/0,2/0}$
 $\pattern{scale =0.4}{2}{}{0/2,1/2,2/2}$
 $\pattern{scale =0.4}{2}{}{0/2,1/1,2/2}$
 $\pattern{scale =0.4}{2}{}{0/2,1/0,2/2}$
 $\pattern{scale =0.4}{2}{}{0/1,1/2,2/1}$
 $\pattern{scale =0.4}{2}{}{0/1,1/0,2/1}$
 $\pattern{scale =0.4}{2}{}{0/0,1/2,2/0}$
 $\pattern{scale =0.4}{2}{}{0/0,1/1,2/0}$
 $\pattern{scale =0.4}{2}{}{0/0,1/0,2/0}$ \\
&&
 $\pattern{scale =0.4}{2}{}{0/2,1/1,1/2,2/2}$
 $\pattern{scale =0.4}{2}{}{0/2,1/0,1/1,2/2}$
 $\pattern{scale =0.4}{2}{}{0/1,1/1,1/2,2/1}$
 $\pattern{scale =0.4}{2}{}{0/1,1/1,1/0,2/1}$
 $\pattern{scale =0.4}{2}{}{0/0,1/1,1/2,2/0}$
 $\pattern{scale =0.4}{2}{}{0/0,1/0,2/0,1/1}$  $\pattern{scale =0.4}{2}{}{0/1,0/2,1/2,2/2,2/1}$  
$\pattern{scale =0.4}{2}{}{0/1,0/2,1/0,2/1,2/2}$
$\pattern{scale =0.4}{2}{}{0/0,0/2,1/2,2/0,2/2}$
$\pattern{scale =0.4}{2}{}{0/0,0/2,1/0,2/0,2/2}$
$\pattern{scale =0.4}{2}{}{0/0,0/1,1/2,2/1,2/0}$
$\pattern{scale =0.4}{2}{}{0/0,0/1,1/0,2/1,2/0}$
$\pattern{scale =0.4}{2}{}{0/1,0/2,1/1,1/2,2/1,2/2}$
$\pattern{scale =0.4}{2}{}{0/1,0/2,1/0,1/2,2/1,2/2}$
$\pattern{scale =0.4}{2}{}{0/1,0/2,1/0,1/1,2/1,2/2}$
$\pattern{scale =0.4}{2}{}{0/0,0/2,1/1,1/2,2/0,2/2}$\\
&&
$\pattern{scale =0.4}{2}{}{0/0,0/2,1/0,1/1,2/0,2/2}$
$\pattern{scale =0.4}{2}{}{0/0,0/1,1/1,1/2,2/1,2/0}$
$\pattern{scale =0.4}{2}{}{0/0,0/1,1/0,1/2,2/0,2/1}$
$\pattern{scale =0.4}{2}{}{0/0,0/1,1/0,1/1,2/0,2/1}$
$\pattern{scale =0.4}{2}{}{0/1,0/2,1/0,1/1,1/2,2/1,2/2}$
$\pattern{scale =0.4}{2}{}{0/0,0/1,1/0,1/1,1/2,2/0,2/1}$
$\pattern{scale =0.4}{2}{}{0/0,0/1,0/2,1/2,2/0,2/1,2/2}$
$\pattern{scale =0.4}{2}{}{0/0,0/1,0/2,1/0,2/0,2/1,2/2}$
$\pattern{scale =0.4}{2}{}{0/0,0/1,0/2,1/1,1/2,2/0,2/1,2/2}$
$\pattern{scale =0.4}{2}{}{0/0,0/1,0/2,1/0,1/1,2/0,2/1,2/2}$\\[5pt]
\hline

\multirow{4.6}{1.7cm}{$X^{(2)}$/$Y^{(3)}$}  & \multirow{4.2}{1.9cm}{\raisebox{0.6ex}{\begin{tikzpicture}[scale = 0.2, baseline=(current bounding box.center)]
	\foreach \x/\y in {0/0,0/1,0/2,3/0}		
		\fill[gray!50] (\x,\y) rectangle +(1,1); 	
        \draw (0,1)--(4,1);
        \draw (1,0)--(1,4);
        \draw (0,2)--(1,2);
         \draw (0,3)--(1,3);
         \draw (2,0)--(2,1);
         \draw (3,0)--(3,1);
 			\node  at (2.6,2.6) {$p$};
 			\filldraw (1,1) circle (4pt) node[above left] {};
 			\end{tikzpicture}}/\raisebox{0.5ex}{\begin{tikzpicture}[scale = 0.2, baseline=(current bounding box.center)]
	\foreach \x/\y in {0/0,1/0,3/0,0/2}		
		\fill[gray!50] (\x,\y) rectangle +(1,1); 	
        \draw (0,1)--(4,1);
        \draw (1,0)--(1,4);
        \draw (0,2)--(1,2);
         \draw (0,3)--(1,3);
         \draw (2,0)--(2,1);
         \draw (3,0)--(3,1);
 			\node  at (2.6,2.6) {$p$};
 			\filldraw (1,1) circle (4pt) node[above left] {};
 			\end{tikzpicture}}}
   &  $\pattern{scale = 0.4}{2}{}{}$
 $\pattern{scale = 0.4}{2}{}{0/1}$
 $\pattern{scale =0.4}{2}{}{1/1}$
 $\pattern{scale =0.4}{2}{}{2/1}$
 $\pattern{scale =0.4}{2}{}{0/1,1/1}$
 $\pattern{scale =0.4}{2}{}{0/1,2/1}$
 $\pattern{scale =0.4}{2}{}{1/1,2/1}$
 $\pattern{scale =0.4}{2}{}{0/0,0/2}$
 $\pattern{scale =0.4}{2}{}{1/0,1/2}$
 $\pattern{scale =0.4}{2}{}{2/0,2/2}$
 $\pattern{scale =0.4}{2}{}{0/0,0/1,0/2}$
 $\pattern{scale =0.4}{2}{}{0/0,1/1,0/2}$
 $\pattern{scale =0.4}{2}{}{0/0,0/2,2/1}$
 $\pattern{scale =0.4}{2}{}{1/0,1/2,0/1}$
 $\pattern{scale =0.4}{2}{}{1/0,1/2,1/1}$
 $\pattern{scale =0.4}{2}{}{1/0,1/2,2/1}$\\
&&
 $\pattern{scale =0.4}{2}{}{2/0,2/2,0/1}$
 $\pattern{scale =0.4}{2}{}{2/0,2/2,1/1}$
 $\pattern{scale =0.4}{2}{}{2/0,2/2,2/1}$
  $\pattern{scale =0.4}{2}{}{0/1,1/1,2/1}$ $\pattern{scale =0.4}{2}{}{0/0,0/2,2/0,2/2}$
 $\pattern{scale =0.4}{2}{}{0/0,0/1,0/2,1/1}$
 $\pattern{scale =0.4}{2}{}{0/0,0/2,1/1,2/1}$
 $\pattern{scale =0.4}{2}{}{1/0,1/1,1/2,0/1}$
 $\pattern{scale =0.4}{2}{}{0/1,1/0,1/2,2/1}$
 $\pattern{scale =0.4}{2}{}{1/0,1/1,1/2,2/1}$
 $\pattern{scale =0.4}{2}{}{0/1,1/1,2/0,2/2}$
 $\pattern{scale =0.4}{2}{}{1/1,2/0,2/1,2/2}$  $\pattern{scale =0.4}{2}{}{0/1,1/0,1/1,1/2,2/1}$  
$\pattern{scale =0.4}{2}{}{0/1,0/0,0/2,1/0,1/2}$
$\pattern{scale =0.4}{2}{}{0/0,0/2,1/0,1/2,2/1}$
$\pattern{scale =0.4}{2}{}{0/0,0/1,0/2,2/0,2/2}$\\
&&
$\pattern{scale =0.4}{2}{}{0/0,0/2,1/1,2/0,2/2}$
$\pattern{scale =0.4}{2}{}{0/0,0/2,2/0,2/1,2/2}$
$\pattern{scale =0.4}{2}{}{0/1,1/0,1/2,2/0,2/2}$
$\pattern{scale =0.4}{2}{}{1/0,1/2,2/0,2/1,2/2}$
$\pattern{scale=0.4}{2}{}{0/0,0/2,1/0,1/2,2/0,2/2}$
$\pattern{scale =0.4}{2}{}{0/0,0/1,0/2,1/0,1/1,1/2}$
$\pattern{scale =0.4}{2}{}{0/0,0/1,0/2,1/0,1/2,2/1}$
$\pattern{scale =0.4}{2}{}{0/0,0/2,1/0,1/1,1/2,2/1}$
$\pattern{scale =0.4}{2}{}{0/0,0/1,0/2,1/1,2/0,2/2}$
$\pattern{scale =0.4}{2}{}{0/0,0/1,0/2,2/0,2/1,2/2}$
$\pattern{scale =0.4}{2}{}{0/0,0/2,1/1,2/0,2/1,2/2}$
$\pattern{scale =0.4}{2}{}{0/1,1/0,1/1,1/2,2/0,2/2}$
$\pattern{scale =0.4}{2}{}{0/1,1/0,1/2,2/0,2/1,2/2}$
$\pattern{scale =0.4}{2}{}{1/0,1/1,1/2,2/0,2/1,2/2}$
$\pattern{scale =0.4}{2}{}{0/0,0/1,0/2,1/0,1/1,1/2,2/1}$
$\pattern{scale =0.4}{2}{}{0/0,0/1,0/2,1/1,2/0,2/1,2/2}$\\
&&
$\pattern{scale =0.4}{2}{}{0/1,1/0,1/1,1/2,2/0,2/1,2/2}$
$\pattern{scale =0.4}{2}{}{0/0,0/1,0/2,1/0,1/2,2/0,2/2}$
$\pattern{scale =0.4}{2}{}{0/0,0/2,1/0,1/1,1/2,2/0,2/2}$
$\pattern{scale =0.4}{2}{}{0/0,0/2,1/0,1/2,2/0,2/1,2/2}$
$\pattern{scale =0.4}{2}{}{0/0,0/1,0/2,1/0,1/1,1/2,2/0,2/2}$
$\pattern{scale =0.4}{2}{}{0/0,0/1,0/2,1/0,1/2,2/0,2/1,2/2}$
$\pattern{scale =0.4}{2}{}{0/0,0/2,1/0,1/1,1/2,2/0,2/1,2/2}$
$\pattern{scale =0.4}{2}{}{0/0,0/1,0/2,1/0,1/1,1/2,2/1,2/0,2/2}$
 \\[5pt]
\hline
\multirow{4.5}{1.7cm}{$Y^{(2)}$/$X^{(3)}$} & \multirow{4.2}{1.9cm}{\raisebox{0.6ex}{\begin{tikzpicture}[scale = 0.2, baseline=(current bounding box.center)]
	\foreach \x/\y in {0/0,1/0,2/0,0/3}		
		\fill[gray!50] (\x,\y) rectangle +(1,1); 	
        \draw (0,1)--(4,1);
        \draw (1,0)--(1,4);
        \draw (0,2)--(1,2);
         \draw (0,3)--(1,3);
         \draw (2,0)--(2,1);
         \draw (3,0)--(3,1);
 			\node  at (2.6,2.6) {$p$};
 			\filldraw (1,1) circle (4pt) node[above left] {};
 			\end{tikzpicture}}/\raisebox{0.5ex}{\begin{tikzpicture}[scale = 0.2, baseline=(current bounding box.center)]
	\foreach \x/\y in {0/0,0/1,0/3,2/0}		
		\fill[gray!50] (\x,\y) rectangle +(1,1); 	
        \draw (0,1)--(4,1);
        \draw (1,0)--(1,4);
        \draw (0,2)--(1,2);
         \draw (0,3)--(1,3);
         \draw (2,0)--(2,1);
         \draw (3,0)--(3,1);
 			\node  at (2.6,2.6) {$p$};
 			\filldraw (1,1) circle (4pt) node[above left] {};
 			\end{tikzpicture}}}
  &  $\pattern{scale =0.4}{2}{}{}$
 $\pattern{scale =0.4}{2}{}{1/2}$
 $\pattern{scale =0.4}{2}{}{1/1}$
 $\pattern{scale =0.4}{2}{}{1/0}$
 $\pattern{scale =0.4}{2}{}{1/1,1/2}$
 $\pattern{scale =0.4}{2}{}{1/2,1/0}$
 $\pattern{scale =0.4}{2}{}{1/0,1/1}$
 $\pattern{scale =0.4}{2}{}{0/2,2/2}$
 $\pattern{scale =0.4}{2}{}{0/1,2/1}$
 $\pattern{scale =0.4}{2}{}{0/0,2/0}$
 $\pattern{scale =0.4}{2}{}{0/2,1/2,2/2}$
 $\pattern{scale =0.4}{2}{}{0/2,1/1,2/2}$
 $\pattern{scale =0.4}{2}{}{0/2,1/0,2/2}$
 $\pattern{scale =0.4}{2}{}{0/1,1/2,2/1}$
 $\pattern{scale =0.4}{2}{}{0/1,1/1,2/1}$
 $\pattern{scale =0.4}{2}{}{0/1,1/0,2/1}$\\
&&
 $\pattern{scale =0.4}{2}{}{0/0,1/2,2/0}$
 $\pattern{scale =0.4}{2}{}{0/0,1/1,2/0}$
 $\pattern{scale =0.4}{2}{}{0/0,1/0,2/0}$
  $\pattern{scale =0.4}{2}{}{1/0,1/1,1/2}$ $\pattern{scale =0.4}{2}{}{0/0,0/2,2/0,2/2}$
 $\pattern{scale =0.4}{2}{}{0/2,1/1,1/2,2/2}$
 $\pattern{scale =0.4}{2}{}{0/2,1/0,1/1,2/2}$
 $\pattern{scale =0.4}{2}{}{0/1,1/1,1/2,2/1}$
 $\pattern{scale =0.4}{2}{}{0/1,1/0,1/2,2/1}$
 $\pattern{scale =0.4}{2}{}{0/1,1/1,1/0,2/1}$
 $\pattern{scale =0.4}{2}{}{0/0,1/1,1/2,2/0}$
 $\pattern{scale =0.4}{2}{}{0/0,1/0,2/0,1/1}$  $\pattern{scale =0.4}{2}{}{0/1,0/2,1/2,2/2,2/1}$  
$\pattern{scale =0.4}{2}{}{0/1,0/2,1/0,2/1,2/2}$
$\pattern{scale =0.4}{2}{}{0/0,0/2,1/2,2/0,2/2}$
$\pattern{scale =0.4}{2}{}{0/0,0/2,1/1,2/0,2/2}$\\
&&
$\pattern{scale =0.4}{2}{}{0/0,0/2,1/0,2/0,2/2}$
$\pattern{scale =0.4}{2}{}{0/0,0/1,1/2,2/1,2/0}$
$\pattern{scale =0.4}{2}{}{0/0,0/1,1/0,2/1,2/0}$
$\pattern{scale =0.4}{2}{}{0/1,1/0,1/1,1/2,2/1}$
$\pattern{scale=0.4}{2}{}{0/0,0/1,0/2,2/0,2/1,2/2}$
$\pattern{scale =0.4}{2}{}{0/1,0/2,1/1,1/2,2/1,2/2}$
$\pattern{scale =0.4}{2}{}{0/1,0/2,1/0,1/2,2/1,2/2}$
$\pattern{scale =0.4}{2}{}{0/1,0/2,1/0,1/1,2/1,2/2}$
$\pattern{scale =0.4}{2}{}{0/0,0/2,1/1,1/2,2/0,2/2}$
$\pattern{scale =0.4}{2}{}{0/0,1/0,2/0,0/2,1/2,2/2}$
$\pattern{scale =0.4}{2}{}{0/0,0/2,1/0,1/1,2/0,2/2}$
$\pattern{scale =0.4}{2}{}{0/0,0/1,1/1,1/2,2/1,2/0}$
$\pattern{scale =0.4}{2}{}{0/0,0/1,1/0,1/2,2/0,2/1}$
$\pattern{scale =0.4}{2}{}{0/0,0/1,1/0,1/1,2/0,2/1}$
$\pattern{scale =0.4}{2}{}{0/1,0/2,1/0,1/1,1/2,2/1,2/2}$
$\pattern{scale =0.4}{2}{}{0/0,0/2,1/0,1/1,1/2,2/0,2/2}$\\
&&
$\pattern{scale =0.4}{2}{}{0/0,0/1,1/0,1/1,1/2,2/0,2/1}$
$\pattern{scale =0.4}{2}{}{0/0,0/1,0/2,1/2,2/0,2/1,2/2}$
$\pattern{scale =0.4}{2}{}{0/0,0/1,0/2,1/1,2/0,2/1,2/2}$
$\pattern{scale =0.4}{2}{}{0/0,0/1,0/2,1/0,2/0,2/1,2/2}$
$\pattern{scale =0.4}{2}{}{0/0,0/1,0/2,1/1,1/2,2/0,2/1,2/2}$
$\pattern{scale =0.4}{2}{}{0/0,0/1,0/2,1/0,1/2,2/0,2/1,2/2}$
$\pattern{scale =0.4}{2}{}{0/0,0/1,0/2,1/0,1/1,2/0,2/1,2/2}$
$\pattern{scale =0.4}{2}{}{0/0,0/1,0/2,1/0,1/1,1/2,2/1,2/0,2/2}$
 \\[5pt]
\hline
\multirow{6.5}{0.7cm}{\rule{\dimexpr \linewidth}{0.4pt}\\[0.2cm]}  & \multirow{4.2}{0.9cm}{\raisebox{0.6ex}{\begin{tikzpicture}[scale = 0.2, baseline=(current bounding box.center)]
	\foreach \x/\y in {0/0,0/1,0/2}		
		\fill[gray!50] (\x,\y) rectangle +(1,1); 	
        \draw (0,1)--(4,1);
        \draw (1,0)--(1,4);
        \draw (0,2)--(1,2);
         \draw (0,3)--(1,3);
         \draw (2,0)--(2,1);
         \draw (3,0)--(3,1);
 			\node  at (2.6,2.6) {$p$};
 			\filldraw (1,1) circle (4pt) node[above left] {};
 			\end{tikzpicture}}}
   &  $\pattern{scale = 0.4}{2}{}{}$
 $\pattern{scale = 0.4}{2}{}{0/1}$
 $\pattern{scale =0.4}{2}{}{1/1}$
 $\pattern{scale =0.4}{2}{}{2/1}$
 $\pattern{scale =0.4}{2}{}{0/1,1/1}$
 $\pattern{scale =0.4}{2}{}{0/1,2/1}$
 $\pattern{scale =0.4}{2}{}{1/1,2/1}$
 $\pattern{scale =0.4}{2}{}{0/0,0/2}$
 $\pattern{scale =0.4}{2}{}{1/0,1/2}$
 $\pattern{scale =0.4}{2}{}{2/0,2/2}$
 $\pattern{scale =0.4}{2}{}{0/0,0/1,0/2}$
 $\pattern{scale =0.4}{2}{}{0/0,1/1,0/2}$
 $\pattern{scale =0.4}{2}{}{0/0,0/2,2/1}$
 $\pattern{scale =0.4}{2}{}{1/0,1/2,0/1}$
 $\pattern{scale =0.4}{2}{}{1/0,1/2,1/1}$
 $\pattern{scale =0.4}{2}{}{1/0,1/2,2/1}$\\
&&
 $\pattern{scale =0.4}{2}{}{2/0,2/2,0/1}$
 $\pattern{scale =0.4}{2}{}{2/0,2/2,1/1}$
 $\pattern{scale =0.4}{2}{}{2/0,2/2,2/1}$
  $\pattern{scale =0.4}{2}{}{0/1,1/1,2/1}$
  $\pattern{scale = 0.4}{2}{}{0/0,1/0,0/2,1/2}$
$\pattern{scale = 0.4}{2}{}{1/0,2/0,1/2,2/2}$
$\pattern{scale =0.4}{2}{}{0/0,0/2,2/0,2/2}$
 $\pattern{scale =0.4}{2}{}{0/0,0/1,0/2,1/1}$
 $\pattern{scale =0.4}{2}{}{0/0,0/2,1/1,2/1}$
 $\pattern{scale =0.4}{2}{}{0/0,0/1,0/2,2/1}$
 $\pattern{scale =0.4}{2}{}{0/1,2/0,2/1,2/2}$
 $\pattern{scale =0.4}{2}{}{1/0,1/1,1/2,0/1}$
 $\pattern{scale =0.4}{2}{}{0/1,1/0,1/2,2/1}$
 $\pattern{scale =0.4}{2}{}{1/0,1/1,1/2,2/1}$
 $\pattern{scale =0.4}{2}{}{0/1,1/1,2/0,2/2}$
 $\pattern{scale =0.4}{2}{}{1/1,2/0,2/1,2/2}$ \\
&&
 $\pattern{scale = 0.4}{2}{}{0/0,0/1,1/1,0/2,2/1}$
$\pattern{scale = 0.4}{2}{}{0/1,2/0,1/1,2/1,2/2}$
$\pattern{scale =0.4}{2}{}{0/1,1/0,1/1,1/2,2/1}$  
$\pattern{scale =0.4}{2}{}{0/1,0/0,0/2,1/0,1/2}$
$\pattern{scale =0.4}{2}{}{0/0,0/2,1/0,1/2,2/1}$
$\pattern{scale =0.4}{2}{}{0/0,0/1,0/2,2/0,2/2}$
$\pattern{scale =0.4}{2}{}{0/0,0/2,1/1,2/0,2/2}$
$\pattern{scale =0.4}{2}{}{0/0,0/2,2/0,2/1,2/2}$
$\pattern{scale =0.4}{2}{}{0/1,1/0,1/2,2/0,2/2}$
$\pattern{scale =0.4}{2}{}{1/0,1/2,2/0,2/1,2/2}$
$\pattern{scale=0.4}{2}{}{0/0,0/2,1/0,1/2,2/0,2/2}$
$\pattern{scale =0.4}{2}{}{0/0,0/1,0/2,1/0,1/1,1/2}$
$\pattern{scale =0.4}{2}{}{0/0,0/1,0/2,1/0,1/2,2/1}$
$\pattern{scale =0.4}{2}{}{0/0,0/2,1/0,1/1,1/2,2/1}$
$\pattern{scale =0.4}{2}{}{0/0,0/1,0/2,1/1,2/0,2/2}$
$\pattern{scale =0.4}{2}{}{0/0,0/1,0/2,2/0,2/1,2/2}$\\
&&
$\pattern{scale =0.4}{2}{}{0/0,0/2,1/1,2/0,2/1,2/2}$
$\pattern{scale =0.4}{2}{}{0/1,1/0,1/1,1/2,2/0,2/2}$
$\pattern{scale =0.4}{2}{}{0/1,1/0,1/2,2/0,2/1,2/2}$
$\pattern{scale =0.4}{2}{}{1/0,1/1,1/2,2/0,2/1,2/2}$
$\pattern{scale =0.4}{2}{}{0/0,0/1,0/2,1/0,1/1,1/2,2/1}$
$\pattern{scale =0.4}{2}{}{0/0,0/1,0/2,1/1,2/0,2/1,2/2}$
$\pattern{scale =0.4}{2}{}{0/1,1/0,1/1,1/2,2/0,2/1,2/2}$
$\pattern{scale =0.4}{2}{}{0/0,0/1,0/2,1/0,1/2,2/0,2/2}$
$\pattern{scale =0.4}{2}{}{0/0,0/2,1/0,1/1,1/2,2/0,2/2}$
$\pattern{scale =0.4}{2}{}{0/0,0/2,1/0,1/2,2/0,2/1,2/2}$
$\pattern{scale =0.4}{2}{}{0/0,0/1,0/2,1/0,1/1,1/2,2/0,2/2}$
$\pattern{scale =0.4}{2}{}{0/0,0/1,0/2,1/0,1/2,2/0,2/1,2/2}$
$\pattern{scale =0.4}{2}{}{0/0,0/2,1/0,1/1,1/2,2/0,2/1,2/2}$
$\pattern{scale =0.4}{2}{}{0/0,0/1,0/2,1/0,1/1,1/2,2/1,2/0,2/2}$
 \\
&&$\pattern{scale = 0.4}{2}{}{0/0,1/0,1/1,1/2,2/0}$ $\pattern{scale = 0.4}{2}{}{0/2,1/0,1/1,1/2,2/2}$ $\pattern{scale = 0.4}{2}{}{0/0,0/1,1/1,2/0,2/1}$ $\pattern{scale = 0.4}{2}{}{0/1,0/2,1/1,2/1,2/2}$ $\pattern{scale = 0.4}{2}{}{0/1,0/2,1/1,1/2}$  $\pattern{scale = 0.4}{2}{}{0/0,0/2,1/0,1/1,1/2,2/2}$ $\pattern{scale = 0.4}{2}{}{0/0,1/0,1/1,1/2,2/0,2/2}$ $\pattern{scale = 0.4}{2}{}{0/0,0/1,0/2,1/1,2/1,2/2}$ $\pattern{scale = 0.4}{2}{}{0/0,0/1,1/1,2/0,2/1,2/2}$\\[5pt]
\hline
\multirow{6.5}{0.7cm}{\rule{\dimexpr \linewidth}{0.4pt}\\[0.3cm]} & \multirow{4.2}{0.9cm}{\raisebox{0.6ex}{\begin{tikzpicture}[scale = 0.2, baseline=(current bounding box.center)]
	\foreach \x/\y in {0/0,1/0,2/0}		
		\fill[gray!50] (\x,\y) rectangle +(1,1); 	
        \draw (0,1)--(4,1);
        \draw (1,0)--(1,4);
        \draw (0,2)--(1,2);
         \draw (0,3)--(1,3);
         \draw (2,0)--(2,1);
         \draw (3,0)--(3,1);
 			\node  at (2.6,2.6) {$p$};
 			\filldraw (1,1) circle (4pt) node[above left] {};
 			\end{tikzpicture}}}
  &  $\pattern{scale =0.4}{2}{}{}$
 $\pattern{scale =0.4}{2}{}{1/2}$
 $\pattern{scale =0.4}{2}{}{1/1}$
 $\pattern{scale =0.4}{2}{}{1/0}$
 $\pattern{scale =0.4}{2}{}{1/1,1/2}$
 $\pattern{scale =0.4}{2}{}{1/2,1/0}$
 $\pattern{scale =0.4}{2}{}{1/0,1/1}$
 $\pattern{scale =0.4}{2}{}{0/2,2/2}$
 $\pattern{scale =0.4}{2}{}{0/1,2/1}$
 $\pattern{scale =0.4}{2}{}{0/0,2/0}$
 $\pattern{scale =0.4}{2}{}{0/2,1/2,2/2}$
 $\pattern{scale =0.4}{2}{}{0/2,1/1,2/2}$
 $\pattern{scale =0.4}{2}{}{0/2,1/0,2/2}$
 $\pattern{scale =0.4}{2}{}{0/1,1/2,2/1}$
 $\pattern{scale =0.4}{2}{}{0/1,1/1,2/1}$
 $\pattern{scale =0.4}{2}{}{0/1,1/0,2/1}$\\
&&
 $\pattern{scale =0.4}{2}{}{0/0,1/2,2/0}$
 $\pattern{scale =0.4}{2}{}{0/0,1/1,2/0}$
 $\pattern{scale =0.4}{2}{}{0/0,1/0,2/0}$
  $\pattern{scale =0.4}{2}{}{1/0,1/1,1/2}$
  $\pattern{scale =0.4}{2}{}{0/0,0/1,2/0,2/1}$
  $\pattern{scale =0.4}{2}{}{0/1,0/2,2/1,2/2}$
  $\pattern{scale =0.4}{2}{}{0/0,0/2,2/0,2/2}$
 $\pattern{scale =0.4}{2}{}{0/2,1/1,1/2,2/2}$
 $\pattern{scale =0.4}{2}{}{0/2,1/0,1/1,2/2}$
 $\pattern{scale =0.4}{2}{}{0/1,1/1,1/2,2/1}$
 $\pattern{scale =0.4}{2}{}{0/1,1/0,1/2,2/1}$
 $\pattern{scale =0.4}{2}{}{0/1,1/1,1/0,2/1}$
 $\pattern{scale =0.4}{2}{}{0/0,1/1,1/2,2/0}$
 $\pattern{scale =0.4}{2}{}{0/0,1/0,2/0,1/1}$
  $\pattern{scale =0.4}{2}{}{0/0,1/0,2/0,1/2}$
  $\pattern{scale =0.4}{2}{}{0/2,2/2,1/2,1/0}$\\
&&
  $\pattern{scale =0.4}{2}{}{0/1,0/2,1/2,2/2,2/1}$  
$\pattern{scale =0.4}{2}{}{0/1,0/2,1/0,2/1,2/2}$
$\pattern{scale =0.4}{2}{}{0/0,0/2,1/2,2/0,2/2}$
$\pattern{scale =0.4}{2}{}{0/0,0/2,1/1,2/0,2/2}$
$\pattern{scale =0.4}{2}{}{0/0,0/2,1/0,2/0,2/2}$
$\pattern{scale =0.4}{2}{}{0/0,0/1,1/2,2/1,2/0}$
$\pattern{scale =0.4}{2}{}{0/0,0/1,1/0,2/1,2/0}$
$\pattern{scale =0.4}{2}{}{0/1,1/0,1/1,1/2,2/1}$ $\pattern{scale =0.4}{2}{}{0/0,1/0,2/0,1/1,1/2}$
  $\pattern{scale =0.4}{2}{}{0/2,1/2,2/2,1/1,1/0}$
$\pattern{scale=0.4}{2}{}{0/0,0/1,0/2,2/0,2/1,2/2}$
$\pattern{scale =0.4}{2}{}{0/1,0/2,1/1,1/2,2/1,2/2}$
$\pattern{scale =0.4}{2}{}{0/1,0/2,1/0,1/2,2/1,2/2}$
$\pattern{scale =0.4}{2}{}{0/1,0/2,1/0,1/1,2/1,2/2}$
$\pattern{scale =0.4}{2}{}{0/0,0/2,1/1,1/2,2/0,2/2}$
$\pattern{scale =0.4}{2}{}{0/0,1/0,2/0,0/2,1/2,2/2}$\\
&&
$\pattern{scale =0.4}{2}{}{0/0,0/2,1/0,1/1,2/0,2/2}$
$\pattern{scale =0.4}{2}{}{0/0,0/1,1/1,1/2,2/1,2/0}$
$\pattern{scale =0.4}{2}{}{0/0,0/1,1/0,1/2,2/0,2/1}$
$\pattern{scale =0.4}{2}{}{0/0,0/1,1/0,1/1,2/0,2/1}$
$\pattern{scale =0.4}{2}{}{0/1,0/2,1/0,1/1,1/2,2/1,2/2}$
$\pattern{scale =0.4}{2}{}{0/0,0/2,1/0,1/1,1/2,2/0,2/2}$
$\pattern{scale =0.4}{2}{}{0/0,0/1,1/0,1/1,1/2,2/0,2/1}$
$\pattern{scale =0.4}{2}{}{0/0,0/1,0/2,1/2,2/0,2/1,2/2}$
$\pattern{scale =0.4}{2}{}{0/0,0/1,0/2,1/1,2/0,2/1,2/2}$
$\pattern{scale =0.4}{2}{}{0/0,0/1,0/2,1/0,2/0,2/1,2/2}$
$\pattern{scale =0.4}{2}{}{0/0,0/1,0/2,1/1,1/2,2/0,2/1,2/2}$
$\pattern{scale =0.4}{2}{}{0/0,0/1,0/2,1/0,1/2,2/0,2/1,2/2}$ 
$\pattern{scale =0.4}{2}{}{0/0,0/1,0/2,1/0,1/1,2/0,2/1,2/2}$
$\pattern{scale =0.4}{2}{}{0/0,0/1,0/2,1/0,1/1,1/2,2/1,2/0,2/2}$
\\
 &&$\pattern{scale = 0.4}{2}{}{0/0,0/1,0/2,1/1,2/1}$ $\pattern{scale = 0.4}{2}{}{0/1,1/1,2/0,2/1,2/2}$ $\pattern{scale = 0.4}{2}{}{0/0,0/2,1/0,1/1,1/2}$ $\pattern{scale = 0.4}{2}{}{1/0,1/1,1/2,2/0,2/2}$ $\pattern{scale = 0.4}{2}{}{1/0,1/1,2/0,2/1}$  $\pattern{scale = 0.4}{2}{}{0/0,0/1,1/1,2/0,2/1,2/2}$  $\pattern{scale = 0.4}{2}{}{0/0,0/1,0/2,1/1,2/1,2/2}$ $\pattern{scale = 0.4}{2}{}{0/0,1/0,1/1,1/2,2/0,2/2}$ $\pattern{scale = 0.4}{2}{}{0/0,0/2,1/0,1/1,1/2,2/2}$\\[5pt]
\hline
	\end{tabular}
\end{center} 
}
\vspace{-0.5cm}
 	\caption{Non-minus-antipodal and non-symmetric shadings (and their types) giving joint equidistributions for the mesh patterns 123 and 132.}\label{tab-extend}
\end{table}
 
\section*{Acknowledgements} 
The authors are grateful to Sergey Kitaev for helpful discussion related to this paper. 
The work of Philip B. Zhang was supported by the National Natural Science Foundation of China (No.\ 12171362) and the Tianjin Municipal Natural Science Foundation (No. 25JCYBJC00430).

\end{document}